  \DeclareFontFamily{U}{mathc}{}
  \DeclareFontShape{U}{mathc}{m}{it}%
  {<->s*[1.03] mathc10}{}
  \DeclareMathAlphabet{\mathfun}{U}{mathc}{m}{it}
  \newenvironment{subproof}[1][\proofname]{
  \begin{proof}[#1]
    }{
  \end{proof}
}
\newcommand{\ms}[1]{\mathscr{#1}}
\newcommand{\topo}[1]{\mathbb{#1}}  
\newcommand{\cat}[1]{\mathsf{#1}}   
\newcommand{\fun}[1]{\mathfun{#1}}  
\newcommand{\mo}[1]{\mathcal{#1}}   
\newcommand{\lan}[1]{\boldsymbol{\mathcal{#1}}}   
\newenvironment{deduction}{%
\par\smallskip\noindent\begin{tabular}{>{(}wl{2ex}<{)}>{$}wl{8mm}<{$}>{$}wl{65mm}<{$}wr{4cm}}}{\end{tabular}\par\smallskip\noindent}
\newenvironment{wideded}{%
\par\smallskip\noindent\begin{tabular}{>{(}wl{2ex}<{)}>{$}wl{24mm}<{$}>{$}wl{65mm}<{$}wr{4cm}}}{\end{tabular}\par\smallskip\noindent}
\renewcommand{\iff}{\quad\text{iff}\quad}
\renewcommand{\tilde}[1]{\widetilde{#1}}
\renewcommand{\phi}{\varphi}
\renewcommand{\epsilon}{\varepsilon}
\renewcommand{\hat}[1]{\widehat{#1}}
\DeclareMathOperator{\id}{id}
\DeclareMathOperator{\op}{op}
\DeclareMathOperator{\Prop}{Prop}
\DeclareMathOperator{\Var}{Var}
\mathchardef\hyphen="2D
\newcommand{\wwedge}{\mathrel{\stackon[-1.5pt]{$\wedge$}{$\wedge$}}}
\newcommand{\mon}{{\vartriangle}} 
\newcommand{\oth}{\operatorname{th}}
\DeclareMathOperator{\Ax}{Ax}
\newcommand{\llb}{\llbracket}
\newcommand{\rrb}{\rrbracket}
\renewcommand{\Box}{{\boxempty}}
\newcommand{\dbox}{{\boxdot}}
\newcommand{\ddiamond}{{\mathbin{\rotatebox[origin=c]{45}{$\boxdot$}}}}
\newcommand{\mto}{\xrightarrow{\circ}}
\DeclareSymbolFont{symbolsC}{U}{txsyc}{m}{n}
\DeclareMathSymbol{\co}{\mathrel}{symbolsC}{128}
\newcommand{\CFil}{\fun{Clp^f}}
\newcommand{\TFil}{\fun{F^t}}
\newcommand{\MI}{\mathbf{MI}}
\newcommand{\Int}{\fun{Int}}
\newcommand{\entails}{\vdash}
\newcommand{\Hilb}{\ms{H}}
\newcommand{\Eq}{\ms{E}}
\newcommand{\mopo}{\textrm{mp}\xspace}
\newcommand{\assum}{\ensuremath{\mathrm{ass}}}
\newcommand{\axio}{\ensuremath{\mathrm{ax}}}
\newcommand{\refl}{\ensuremath{\mathrm{ref}}}
\begin{document}

\title[Modal meet-implication logic]{Modal meet-implication logic}

\author[J.~de Groot]{Jim de Groot}	

\author[D.~Pattinson]{Dirk Pattinson}	
\address{The Australian National University, Canberra, Australia}	
\email{\{\texttt{jim.degroot}, \texttt{dirk.pattinson}\}\texttt{@anu.edu.au}}  

\begin{abstract}
  We extend the meet-implication fragment of propositional intuitionistic logic
  with a meet-preserving modality. We give semantics based on semilattices
  and a duality result with a suitable notion of descriptive frame.
  As a consequence we obtain completeness and identify a common (modal) fragment
  of a large class of modal intuitionistic logics.
  
  We recognise this logic as a dialgebraic logic, and as a consequence obtain
  expressivity-somewhere-else.
  Within the dialgebraic framework, we then investigate the extension of
  the meet-implication fragment of propositional intuitionistic logic with a
  monotone modality and prove completeness and expressivity-somewhere-else for it.
\end{abstract}

\maketitle

\section{Introduction}

  In the field of mathematical logic, one does not only study logical formalisms,
  but also the relationships between them.
  The main reason for this is that one can often transfer results from one logic
  to the other, like complexity or completeness results. 

  Two formalisms may differ because they have a different set of connectives.
  For example, classical propositional logic contains connectives that are not
  expressible in positive logic.
  Nonetheless, the two logics are closely related:
  classical propositional logic is a conservative extension of positive logic.
  That is, a positive propositional formula is valid in positive logic
  if and only if it is classically valid, when viewed as a formula of
  classical propositional logic.
  Similarly intuitionistic logic is a conservative
  extension of its $(\top, \wedge, \to)$-fragment (that we call $\mathbf{MI}$)
  and of its positive fragment.
%
  Even if two logical formalisms use the same language,
  they can still have different axioms or rules.
  An example of this phenomenon can be found in the multitude of different modal extensions
  of intuitionistic logic \cite{Bul66,Fis80,BozDos84,Yok85,PloSti86,WolZak98}.

  Another interesting instance is the comparison of positive \emph{modal} logic
  and classical modal logic. This traces back to Dunn \cite{Dun95},
  whose paper on positive modal logic was motivated by finding an
  axiomatisation for the positive fragment of normal modal logic.
  As a consequence, the logic he found automatically has classical normal modal logic
  as a conservative extension.
  More recently, the monotone modal logic analogue hereof has been studied \cite{Gro20}.

  In analogy to this, we investigate the relation between modal extensions
  of intuitionistic logic and modal extensions of $\mathbf{MI}$.
  On first sight this may seem like a daunting task, considering the wide
  variety of modal intuitionistic logics mentioned above.
  However, it turns out that the
  extension $\mathbf{MI_{\Box}}$ of $\mathbf{MI}$ with a meet-preserving
  unary modality $\Box$ is a common denominator of many of them,
  thus making the task at hand much more manageable.
  More precisely, we prove that the modal extensions of intuitionistic logic given by
  Bo\v{z}i\'{c} and Do\v{s}en \cite{BozDos84}, Plotkin and Sterling \cite{PloSti86},
  Fisher Servi \cite{Fis80},
  and Wolter and Zakharyaschev \cite{WolZak98} are all conservative extensions of
  $\mathbf{MI_{\Box}}$.

  In order to establish this, we carry out a semantic study of
  $\mathbf{MI}$ and $\mathbf{MI_{\Box}}$.
  We begin by giving semantics for $\mathbf{MI}$
  based on meet-semilattices, that we call I-models.
  In such I-models, formulae are interpreted as filters;
  these form natural denotations of formulae because the collection of filters of
  a meet-semilattice is closed under intersections but not
  under unions.
  We restrict the known duality between meet-semilattices and so-called
  M-spaces from \cite{DavWer83} to a duality for \emph{implicative} semilattices.
  We refer to those M-spaces dual to an implicative semilattice as I-spaces.
  They form a category that is isomorphic to a category of descriptive frames.
  (That this situation is completely analogous to the intuitionistic logic
  framework becomes apparent when looking at Figure \ref{fig:int}.)

  Subsequently, we enrich the language of $\mathbf{MI}$ with a box-like
  modal operator that preserves finite meets to obtain $\mathbf{MI_{\Box}}$.
  This modal operator is interpreted via its own accessibility relation
  in what we call $\Box$-frames.
  Based on this, we define \emph{descriptive} $\Box$-frames for $\mathbf{MI_{\Box}}$.
  These give rise to a duality for $\mathbf{MI_{\Box}}$ which 
  piggy-backs on the duality for implicative semilattices. 
  As a consequence of this duality we not only derive weak completeness,
  but we also observe that many famous modal \emph{intuitionistic} logics
  are conservative extensions of~$\mathbf{MI_{\Box}}$.
  
  We then recognise the setup for $\mathbf{MI_{\Box}}$ as an instance of a
  \emph{dialgebraic logic}. The framework of dialgebraic logic is a slight
  generalisation of coalgebraic logic and was recently proposed as a
  suitable framework for the study of modal intuitionistic logics \cite{GroPat20}.
  After we detail how $\mathbf{MI_{\Box}}$ fits the dialgebraic perspective,
  we instantiate results from {\it op.~\!cit.}~to obtain the notion of
  a \emph{filter extension} of a frame for $\mathbf{MI_{\Box}}$.
  Furthermore, we obtain \emph{expressivity-somewhere-else}:
  two states in two models are logically equivalent if and only if
  certain corresponding states in the filter extensions of the models
  are behaviourally equivalent.

  Once we have this dialgebraic perspective on modal extensions of
  $\mathbf{MI}$, we use it to sketch the extension $\mathbf{MI_{\mon}}$ of $\mathbf{MI}$ with a
  monotone modal operator $\mon$. As a consequence of the dialgebraic approach,
  we get weak completeness, filter extensions, and expressivity-somewhere-else.
  Furthermore, with only minor additional effort one can see that
  intuitionistic logic with a \emph{geometric} modality,
  see e.g.~\cite[Section 6]{Gol93}, is a conservative extension of $\mathbf{MI_{\mon}}$.

\subsection*{Structure of the paper}
  In Section \ref{sec:sl} we recall basic facts about meet-semilattices,
  a duality for meet-semilattices, and implicative semilattices.
  Subsequently, in Section \ref{sec:base} we give a semantics for the
  meet-implication fragment of intuitionistic logic that is based on meet-semilattices.
  We also define the notion of a descriptive I-frame, show that these are
  isomorphic to so-called I-spaces, which in turn are dually equivalent to
  implicative semilattices.
  
  In Section \ref{sec:normal} we start the study of a normal modal extension
  of $\mathbf{MI}$. We provide frame semantics by means of what we call $\Box$-frames,
  and show how to view these as dialgebras.
  Building on this, in Section \ref{sec:duality} we define general and descriptive
  $\Box$-frames. We then show that these can again be viewed as dialgebras,
  and we prove a duality with the algebraic semantics of $\mathbf{MI_{\Box}}$.
  In Section \ref{sec:fragments} we prove that many modal intuitionistic logics
  are conservative extensions of $\mathbf{MI_{\Box}}$.
  
  Subsequently, in Section \ref{sec:dialg}, we recall the framework of
  dialgebraic logic and show that $\mathbf{MI_{\Box}}$ is an instance
  of this. As a consequence, we get a notion of filter extensions
  and expressivity-somewhere-else.
  In Section \ref{sec:monotone} we make use of this dialgebraic perspective to
  investigate $\mathbf{MI}$ with a monotone modality $\mon$, and obtain similar results
  as for $\Box$.

\subsection*{Related work}
%
  The meet-implication fragment of intuitionistic propositional logic has been studied by itself in
  \cite[Section 4.C]{Cur63} and more recently in e.g.~\cite{BezEA15,FonMor14}.
  Its appeal partly lies in the fact that the category of meet-semilattices
  is finitely generated, whereas the category of Heyting algebras is not.
  Implicative meet-semilattices have also been studied by themselves,
  see e.g.~\cite{Nem65,NemWha71}.
  In \cite{Mon55} a set of equational axioms for implicative meet-semilattices
  is given, which proves that the category of implicative meet-semilattices
  is a variety of algebras.
  
  There are several different dualities for (implicative) meet-semilattices.
  In \cite{Bal69}, the author gives a representation theorem for prime semilattices and implicative
  semilattices by means of prime filters.
  A duality for meet-semilattices where the dual spaces are topologised meet-semilattices
  was given by Davey and Werner in \cite{DavWer83}.
  More recently, in \cite{BezJan11} and \cite{BezJan13} a Priestley and Esakia-style duality for distributive
  (implicative) meet-semilattices is developed.
  We have opted for the restriction of the duality from \cite{DavWer83}
  because it arises as a topologised version of implicative semilattices and,
  as such, is intuitive to use as a notion of descriptive frame.


\section{Meet-semilattices}\label{sec:sl}

  We recall basic definitions and facts that are either known or elementary.

\subsection*{Posets}
  Let $(X, \leq)$ be a poset and $a \subseteq X$. We write
  ${\uparrow}a = \{ x \in X \mid y \leq x \text{ for some } y \in a \}$
  and we call $a$ \emph{up-closed} or an \emph{upset} if ${\uparrow}a = a$.
  Similarly define ${\downarrow}a$ and \emph{downsets}.
  A \emph{monotone function} from $(X, \leq)$ to $(X', \leq')$ is a map
  $f : X \to X'$ such that $x \leq y$ implies $f(x) \leq' f(y)$.
  It is called \emph{bounded} or a \emph{p-morphism} if moreover for all
  $x \in X$ and $y' \in X'$ such that $f(x) \leq' y'$ there exists
  $y \in X$ such that $x \leq y$ and $f(y) = y'$.
  We write $\cat{Pos}$ and $\cat{Krip}$ for the categories of posets with
  monotone morphisms and bounded morphisms, respectively.

\subsection*{Meet-semilattices}
  A \emph{meet-semilattice} is a poset in which every finite subset has
  a greatest lower bound, called the \emph{meet}.
  The meet two elements $a$ and $b$ is denoted by
  $a \wedge b$, and the meet of the empty set is the top element $\top$.
  A \emph{meet-semilattice morphism} is a monotone
  function that preserves finite meets (hence also $\top$).
  We sometimes write $(X, \wedge, \top)$ for the meet-semilattice corresponding 
  corresponding to a poset $(X, \leq)$.
  We can recover $\leq$ from $(X, \wedge, \top)$ via $x \leq y$ iff $x \wedge y = x$.
  
  The category of meet-semilattices (with top)
  and meet-semilattice morphisms is denoted by $\cat{SL}$.
  It forms a variety of algebras, and the free
  meet-semilattice on a set $X$ is given by finite subsets of $X$
  (intuitively, these are finite meets), with $\wedge$ given by union
  and top element $\emptyset$.

  Throughout this text, by \emph{semilattice} we always mean
  meet-semilattice with top.

\subsection*{The contravariant filter functor}
  A \emph{filter} in a semilattice $A$ is a non-empty up-closed subset that is
  closed under finite meets.
  A \emph{prime down-set} is a down-closed subset $d$ of $A$
  that satisfies: if $a \wedge a' \in d$ then $a \in d$ or $a' \in d$.
  A subset $p \subseteq A$ is a filter of $A$ if and only if its complement
  $p^c = A \setminus p$ is a prime downset.
  Let $2$ be the two-element semilattice and $p \subseteq A$.
  Then the characteristic map $\chi_p : A \to 2$, which sends the elements of $p$
  to $1$ and all other elements to $0$, is a semilattice morphism
  if and only if $p$ is a filter.

  We write $\fun{F}A$ be the collection of filters of a semilattice $A$.
  With conjunction $\cap$ and top element $A$, the set $\fun{F}A$ forms a semilattice again.
  We can extend this assignment to a contravariant functor $\fun{F} : \cat{SL} \to \cat{SL}$
  by defining the action of $\fun{F}$ on a morphism $h : A \to A'$
  by $\fun{F}h : \fun{F}A' \to \fun{F}A : p' \mapsto h^{-1}(p')$.


  
  For every $a \in A$ the collection
  $\tilde{a} = \{ p \in \fun{F}A \mid a \in p \}$ is a filter in $\fun{F}A$.
  This gives rise to a morphism $\eta_A : A \to \fun{FF}A : a \mapsto \tilde{a}$.
  In fact, the collection
  $\eta = (\eta_A)_{A \in \cat{SL}} : \id_{\cat{SL}} \to \fun{FF}$ is a natural transformation.
  To see this, suppose $h : A \to B$ is a homomorphism, and let $a \in A$
  and $p \in \fun{FF}B$. Then we have
  $$
    p \in (\fun{FF}h)(\eta_A(a))
      \iff f^{-1}(p) \in \eta_A(a)
      \iff a \in f^{-1}(p)
      \iff f(a) \in p
      \iff p \in \eta_B(f(a)).
  $$

\begin{prop}
  The functor $\fun{F} : \cat{SL} \to \cat{SL}$ is dually adjoint to
  itself. It defines a dual adjunction between $\cat{SL}$ and itself
  with units $\eta_A$.
\end{prop}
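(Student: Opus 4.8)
The plan is to cut the work down to a single triangle identity and then settle it by a one-line computation on filters. The only inputs I need are the two facts established just above the statement: that $\fun{F}A$ is again a semilattice (with intersection and top $A$), and that $\eta = (\eta_A)_{A \in \cat{SL}} : \id_{\cat{SL}} \to \fun{FF}$ is natural; in particular $\eta_A$ and each $\fun{F}h$ are genuine $\cat{SL}$-morphisms.

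First I would record the abstract reformulation: for a contravariant endofunctor $\fun{F}$ together with a natural transformation $\eta : \id_{\cat{SL}} \to \fun{FF}$, giving a dual self-adjunction with units $\eta_A$ amounts to exhibiting a natural bijection
\[
  \Phi_{A,B} : \cat{SL}(B, \fun{F}A) \longrightarrow \cat{SL}(A, \fun{F}B), \qquad h \longmapsto \fun{F}h \circ \eta_A .
\]
Naturality of $\Phi$ in both arguments is automatic, since $\Phi$ is assembled from $\eta$ and the functoriality of $\fun{F}$; and because $\Phi_{A,B}$ and $\Phi_{B,A}$ have exactly the same shape, it suffices to prove $\Phi_{B,A} \circ \Phi_{A,B} = \id$ for all $A, B$. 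For $h : B \to \fun{F}A$ one computes
\[
  \Phi_{B,A}(\Phi_{A,B}(h))
    = \fun{F}(\fun{F}h \circ \eta_A) \circ \eta_B
    = \fun{F}\eta_A \circ \fun{FF}h \circ \eta_B
    = \fun{F}\eta_A \circ \eta_{\fun{F}A} \circ h ,
\]
where the last step is naturality of $\eta$ at $h$. So everything reduces to the identity $\fun{F}\eta_A \circ \eta_{\fun{F}A} = \id_{\fun{F}A}$ for every $A \in \cat{SL}$.

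To verify this I would just unwind the definitions on a filter $p \in \fun{F}A$: by definition $\eta_{\fun{F}A}(p) = \tilde{p} = \{ P \in \fun{FF}A \mid p \in P \}$, while $\fun{F}\eta_A$ sends $Q \in \fun{FFF}A$ to $\eta_A^{-1}(Q) = \{ a \in A \mid \tilde{a} \in Q \}$; hence
\[
  (\fun{F}\eta_A \circ \eta_{\fun{F}A})(p)
    = \{ a \in A \mid \tilde{a} \in \tilde{p} \}
    = \{ a \in A \mid p \in \tilde{a} \}
    = \{ a \in A \mid a \in p \}
    = p .
\]
This gives the required triangle identity, hence the natural bijection $\Phi$, and with it the dual adjunction, whose units are the $\eta_A$ (which, by self-duality, double as counits). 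The calculations are all routine; the one spot that needs attention is the variance bookkeeping — checking that the two triangle identities of a dual self-adjunction genuinely collapse to the single equation above, which is exactly what lets one replace two verifications by one short line. (Alternatively, one could note that $\fun{F}A \cong \cat{SL}(A,2)$ for the two-element semilattice $2$ via $p \mapsto \chi_p$, and invoke the standard fact that a hom-functor into a dualizing object is self-dually adjoint; but the direct check above is shorter.) I do not anticipate any real obstacle beyond this.
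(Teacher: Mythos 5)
Your proof is correct and follows essentially the same route as the paper: the paper's entire proof is exactly your final computation $a \in \fun{F}\eta_A(\eta_{\fun{F}A}(p)) \iff \eta_A(a) \in \eta_{\fun{F}A}(p) \iff p \in \eta_A(a) \iff a \in p$, establishing the triangle identity $\fun{F}\eta_A \circ \eta_{\fun{F}A} = \id_{\fun{F}A}$. The only difference is that you make explicit the reduction (via the natural bijection $\Phi$ and the symmetry of the two triangle identities) that the paper leaves implicit, which is a reasonable bit of added bookkeeping rather than a different argument.
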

\begin{proof}
  We verify that $\fun{F}\eta_A \circ \eta_{\fun{F}A} = \id_{\fun{F}A}$.
  Let $a \in A$ and $p \in \fun{F}A$. Then we have
  $$
    a \in \fun{F}\eta_A(\eta_{\fun{F}A}(p))
      \iff \eta_A(a) \in \eta_{\fun{F}A}(p)
      \iff p \in \eta_A(a)
      \iff a \in p.
  $$
  This completes the proof.
\end{proof}

\subsection*{Duality for semilattices}
  We can adapt the dual adjunction to a dual equivalence by topologising one side of the
  dual adjunction.
  An \emph{M-space} is a tuple
  $\topo{X} = (X, \wedge, \top, \tau)$ such that $(X, \wedge, \top)$ is a meet-semilattice,
  and $(X, \tau)$ is a Stone space generated by a subbase
  of clopen filters and their complements. 
  We write $\cat{MSpace}$ for the category of M-spaces and
  continuous semilattice homomorphisms.

\begin{lem}\label{lem:closed-clp-fil}
  Let $\topo{X} = (X, \wedge, \top, \tau)$ be an M-space
  and $c \subseteq X$ a filter in $(X, \wedge, \top)$.
  Then $c$ is closed in $(X, \tau)$ if and only if
  it is the intersection of all clopen filters that contain $c$.
\end{lem}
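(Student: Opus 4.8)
The plan is to prove both implications. The direction from right to left is immediate: an arbitrary intersection of clopen (hence closed) sets is closed, so if $c$ equals the intersection of all clopen filters containing it, then $c$ is closed. The content is in the forward direction, so assume $c$ is a closed filter. Clearly $c$ is contained in the intersection of all clopen filters $f \supseteq c$, so it suffices to show that if $x \notin c$ then there is a clopen filter $f$ with $c \subseteq f$ and $x \notin f$.

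First I would fix $x \notin c$ and use the fact that $c$ is closed together with the definition of an M-space. Since $\tau$ has a subbase of clopen filters and their complements, and $X \setminus c$ is open containing $x$, there is a basic open neighbourhood of $x$ missing $c$; that is, finitely many clopen filters $f_1, \dots, f_n$ and finitely many clopen filters $g_1, \dots, g_m$ with
$$
  x \in f_1 \cap \dots \cap f_n \cap (X \setminus g_1) \cap \dots \cap (X \setminus g_m) \subseteq X \setminus c.
$$
Since the intersection of finitely many clopen filters is again a clopen filter (filters are closed under finite intersection, and clopen sets under finite intersection), I may replace $f_1 \cap \dots \cap f_n$ by a single clopen filter $f$, so $x \in f$, and $f \cap (X \setminus g_1) \cap \dots \cap (X \setminus g_m) \subseteq X \setminus c$, i.e.\ $c \cap f \subseteq g_1 \cup \dots \cup g_m$. (If $n = 0$ take $f = X$, which is a clopen filter.)

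The main obstacle is now to produce from this a single clopen filter separating $c$ from $x$, and this is where I expect to work hardest. The idea is to consider the set $h = (X \setminus f) \cup g_1 \cup \dots \cup g_m$. It is clopen, it contains $X \setminus f$ and hence does not contain $x$ (as $x \in f$), and it contains $c$: indeed for $y \in c$, either $y \notin f$, so $y \in h$, or $y \in c \cap f \subseteq g_1 \cup \dots \cup g_m \subseteq h$. The remaining point is that $h$ need not be up-closed or meet-closed, so I would pass to the smallest filter containing $h$, or rather argue that the up-closure of a suitable clopen set is again clopen. Concretely, I would take $f' = {\uparrow}h$ if that is still clopen; more robustly, one shows that in an M-space the clopen filters are exactly the complements of the ``basic'' clopen sets of the form above that happen to be down-closed, and then dualises. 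I expect the clean route is: the complement $c^c$ is an open prime down-set, cover it as above, note $X \setminus f$ together with the $g_i$ gives an open cover whose union is a down-set containing $c^c$ but not... — on reflection the symmetric formulation via prime down-sets (Lemma-style: a point outside a closed filter can be separated by a clopen prime down-set) is the one to push through, using that finite unions of clopen prime down-sets are clopen prime down-sets and that the complement of a clopen filter is one. Taking complements then yields the desired clopen filter $f \supseteq c$ with $x \notin f$, completing the proof.
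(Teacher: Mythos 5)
Your right-to-left direction and the reduction to ``separate $x \notin c$ from $c$ by a single clopen filter'' are fine, but the forward direction is not actually proved: the passage from your clopen set $h = (X\setminus f)\cup g_1\cup\cdots\cup g_m$ (which contains $c$ and misses $x$) to a clopen \emph{filter} with the same two properties is precisely the crux, and you leave it as a list of tentative ideas, none of which works as stated. The filter generated by $h$ (or ${\uparrow}h$) contains ${\uparrow}(X\setminus f)$, which will in general contain $x$ (any $z \leq x$ with $z \notin f$ puts $x$ into it), and there is no reason for it to be clopen. The dual route you settle on also does not go through with the closure fact you cite: ``finite unions of clopen prime down-sets are clopen prime down-sets'' is just the complement of ``finite intersections of clopen filters are clopen filters'', whereas what any covering-plus-compactness argument hands you is an \emph{intersection} of clopen prime down-sets containing $x$ (equivalently, a finite \emph{union} of clopen filters covering $c$), and such intersections need not be prime, just as such unions need not be filters. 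Tellingly, the concrete part of your argument never uses that $c$ is a filter, yet the statement is false for closed sets that are not filters, so an essential ingredient is missing.

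The missing ingredient is the paper's final step, and its argument runs in the opposite direction from your opening move. For each $y \in c$ one separates $y$ from $x$ by a \emph{subbasic} clopen filter $a_y$ with $y \in a_y$ and $x \notin a_y$: since $c$ is up-closed and $x \notin c$ we have $y \not\leq x$, so $x \wedge y \neq y$, hence some subbasic clopen set contains $y$ but not $x \wedge y$; it cannot be the complement of a clopen filter (such a complement is a down-set and would contain $x \wedge y$), so it is a clopen filter, and it cannot contain $x$ (else it would contain $x \wedge y$). Then $c$, being closed in a compact space, is covered by finitely many of these, $c \subseteq a_1 \cup \cdots \cup a_n$, and now the filter property of $c$ is used: if $c \not\subseteq a_i$ for every $i$, pick $z_i \in c \setminus a_i$ and note that $z_1 \wedge \cdots \wedge z_n \in c$ lies in no $a_i$ (each $a_i$ is up-closed), a contradiction. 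Hence some single $a_i$ is a clopen filter containing $c$ and omitting $x$. Your construction of a basic neighbourhood of $x$ inside $X \setminus c$ does not feed into this lemma-shaped fact (a filter contained in a finite union of up-sets lies in one of them), and without it, or a genuine substitute, the proof is incomplete.
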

\begin{proof}
  The direction from right to left follows immediately from the fact that
  the arbitrary intersection of clopen sets is closed.
  For the converse, suppose $x \notin c$.
  Then for each $y \in c$ there exists a clopen filter $a_y$
  such that $y \in a_y$ and $x \notin a_y$.
  (To see this, note that $x \wedge y \leq y$. Since
  $(X, \tau)$ is a Stone space there exists a clopen set $a$ containing
  $y$ and not $x \wedge y$. Since $(X, \tau)$ is generated by clopen filters
  and their complements, we may assume that $a$ is either a clopen filter
  or the complement of a clopen filter. But the latter is not an option,
  as it would imply $x \wedge y \in a$, so $a$ is a clopen filter.
  Furthermore, we have $x \notin a$ because $x \in a$ would imply
  $x \wedge y \in a$, as $a$ is a filter.)
  Then we have an open cover $c \subseteq \bigcup \{ a_y \mid y \in c \}$,
  so there exists a finite subcover, say, $c \subseteq a_1 \cup \cdots \cup a_n$.
  Finally, we claim $c \subseteq a_i$ for some $1 \leq i \leq n$.
  For suppose this is not the case, then for each $a_i$ we can find an element
  $z_i$ such that $z_i \in c$ and $z_i \notin a_i$. This implies
  $z_1 \wedge \cdots \wedge z_n \in c$, because $c$ is a filter,
  but $z_1 \wedge \cdots \wedge z_n$ not in $a_i$ for each $i$,
  because that would imply $z_i \in a_i$, a contradiction.
  So there must exist a clopen filter containing $c$. Moreover
  this clopen filter does not contain $x$ by construction.
  It follows that $c$ is the intersection of all clopen filters containing it.
\end{proof}

  Define $\TFil : \cat{SL} \to \cat{MSpace}$ by sending a semilattice $A$
  to the semilattice of filters with a topology generated by
  $$
    \tilde{a} = \{ p \in \fun{F}A \mid a \in p \},
    \qquad
    \tilde{a}^c = \{ p \in \fun{F}A \mid a \notin p \},
  $$
  where $a$ ranges over $A$. For a morphism $h$ define $\TFil h = h^{-1}$.
  Conversely, define $\CFil : \cat{MSpace} \to \cat{SL}$ by sending an
  M-space $\topo{X}$ to its collection of clopen filters,
  viewed as a semilattice with meet $\cap$ and top $\topo{X}$. 
  For a continuous semilattice morphism $h$ set $\CFil h = h^{-1}$.
  The following result is originally due to Hofmann, Mislove and Stralka
  \cite{HofMisStr74}, and can also be found in \cite{DavWer83}.
  
\begin{thm}\label{thm:msl-duality}
  The functors $\TFil$ and $\CFil$ define a dual equivalence
  $\cat{MSpace} \equiv^{\op} \cat{SL}$.
\end{thm}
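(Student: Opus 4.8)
The plan is to promote the dual self-adjunction $\fun F\dashv\fun F$ on $\cat{SL}$ (with unit $\eta$) to a dual equivalence by checking that, once its codomain is cut down to the topological side, $\eta$ becomes an isomorphism, and by exhibiting the matching counit. After a quick verification that $\TFil$ and $\CFil$ are well defined (that $\TFil A=(\fun FA,\cap,A,\tau)$ is an M-space, and that $h^{-1}$ of a clopen filter is a clopen filter, are routine: the $\tilde a$ are clopen filters by construction, and $\fun FA$ is compact Hausdorff as a closed subspace of $2^{A}$), I identify $\CFil\TFil A$ with the subsemilattice $\{\tilde a\mid a\in A\}$ of $\fun{FF}A$. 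The map $\eta_A\colon A\to\CFil\TFil A,\ a\mapsto\tilde a$ is then a natural transformation (the naturality square was checked above) and a semilattice morphism since $\widetilde{a\wedge b}=\tilde a\cap\tilde b$ and $\tilde\top=\fun FA$. Injectivity is easy: if $a\not\leq b$ then ${\uparrow}a$ is a filter with $a\in{\uparrow}a$, $b\notin{\uparrow}a$, so $\tilde a\neq\tilde b$; as a bijective semilattice morphism is automatically an isomorphism, the only real work is surjectivity.

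For surjectivity — which I expect to be the main obstacle — let $c$ be a clopen filter of $\TFil A$. Being closed in the compact space $\TFil A$ it is compact, and being a filter of $\fun FA$ it is up-closed and closed under binary $\cap$; the argument then follows the pattern of Lemma~\ref{lem:closed-clp-fil}. First, $P:=\bigcap c$ is a non-empty intersection of filters, hence a point of $\TFil A$; if $P\notin c$, pick a basic open $U=\tilde{a_0}\cap\tilde{b_1}^c\cap\cdots\cap\tilde{b_n}^c$ with $P\in U\subseteq c^{c}$ and, since each $b_i\notin P$, choose $p_i\in c$ with $b_i\notin p_i$. Then $p_1\cap\cdots\cap p_n\in c$ (closure under $\cap$), but it also lies in $U\subseteq c^{c}$, a contradiction; the degenerate cases (no $b_i$, or no $\tilde{a_0}$) force $c=\emptyset$, contradicting that $c$ is a filter. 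Hence $P\in c$, so by up-closedness $c=\{p\in\fun FA\mid P\subseteq p\}=\bigcap_{a\in P}\tilde a$. Finally $c^{c}=\bigcup_{a\in P}\tilde a^{c}$ is closed, hence compact, so $c^{c}=\tilde{a_1}^{c}\cup\cdots\cup\tilde{a_k}^{c}$ for some $a_i\in P$, i.e. $c=\widetilde{a_1\wedge\cdots\wedge a_k}=\eta_A(a_1\wedge\cdots\wedge a_k)$, as required.

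On the topological side I define $\epsilon_{\topo X}\colon\topo X\to\TFil\CFil\topo X,\ x\mapsto\hat x:=\{c\in\CFil\topo X\mid x\in c\}$ and show it is an isomorphism of M-spaces. It is a well-defined semilattice morphism because $\hat x$ is a filter of $\CFil\topo X$ and, for a filter $c$, $x\wedge y\in c\iff x\in c\text{ and }y\in c$. It is continuous because the $\epsilon_{\topo X}$-preimage of the subbasic open $\{Q\mid c\in Q\}$, respectively of its complement, is $c$, respectively $c^{c}$, which is clopen. It is injective because the Stone space $\topo X$, having a subbasis of clopen filters and their complements, separates any two distinct points by a clopen filter containing exactly one of them. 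Surjectivity of $\epsilon_{\topo X}$ is the analogue of the obstacle above: given a filter $P$ of $\CFil\topo X$, one first notes $K:=\bigcap P$ is a non-empty closed filter (finite intersection property plus compactness) and that any clopen filter containing $K$ already lies in $P$ (again by compactness and up-closedness of $P$); then, if no point worked, covering $K$ by clopen filters $c_x\notin P$ with $c_x\not\supseteq K$, taking a finite subcover and meeting witnesses $y_{x_i}\in K\setminus c_{x_i}$ inside $K$ yields a contradiction exactly as in Lemma~\ref{lem:closed-clp-fil}. One thus obtains $x\in K$ with $\hat x=P$. Being a continuous bijection from a compact space onto a Hausdorff (Stone) space, $\epsilon_{\topo X}$ is a homeomorphism, hence an isomorphism in $\cat{MSpace}$; naturality of $\epsilon$ is a routine chase using $\CFil h=h^{-1}$. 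Since $\eta$ and $\epsilon$ are natural isomorphisms $\id_{\cat{SL}}\cong\CFil\TFil$ and $\id_{\cat{MSpace}}\cong\TFil\CFil$, the functors $\TFil$ and $\CFil$ constitute the claimed dual equivalence.
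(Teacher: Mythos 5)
Your argument is correct, but note that the paper itself gives no proof of Theorem~\ref{thm:msl-duality}: it is imported from Hofmann--Mislove--Stralka \cite{HofMisStr74} (see also \cite{DavWer83}), so there is no in-paper argument to compare against. What you supply is a self-contained Stone-style proof: both composites are shown naturally isomorphic to the identities via $\eta_A : a \mapsto \tilde{a}$ and $\epsilon_{\topo{X}} : x \mapsto \hat{x}$, with all the real content in the two surjectivity claims, and both of these are handled by the same compactness-plus-filter device the paper uses in Lemma~\ref{lem:closed-clp-fil}. Both surjectivity arguments check out. For $\eta_A$, the one micro-step left implicit is that $a_0 \in P \subseteq p_i$ for every $p_i \in c$, which is what puts $p_1 \cap \cdots \cap p_n$ inside $U$; also, your parenthetical that the degenerate cases force $c = \emptyset$ is accurate only when there are no $\tilde{b}_i^c$ factors --- when the $\tilde{a}_0$ factor is missing you can simply take $a_0 = \top$ and run the main argument unchanged. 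For $\epsilon_{\topo{X}}$, your covering argument deliberately avoids assuming that closed filters are principal or that $\wedge$ is continuous, which is a genuine point in its favour: the paper's definition of M-space does not require continuity of the meet, so your proof establishes the duality under that literal definition (continuity of $\wedge$ then follows a posteriori from $\topo{X} \cong \TFil\CFil\topo{X}$, since $\cap$ is easily seen to be continuous on $\TFil B$). The items you declare routine --- that $\TFil A$ is a Stone space as a closed subspace of $2^A$ with the correct subbase, functoriality of $\TFil$ and $\CFil$, naturality of $\epsilon$, and that bijective homomorphisms in the variety $\cat{SL}$ are isomorphisms --- are indeed routine, so the proof stands as a valid replacement for the citation.
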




\subsection*{Distributive semilattices}
  A meet-semilattice $A$ is called \emph{distributive} if for all
  $a, b, c \in A$, whenever $a \wedge b \leq c$ there exist
  $a', b'$ such that $a \leq a', b \leq b'$ and $c = a' \wedge b'$.
  Write $\cat{DSL}$ for the full subcategory of $\cat{SL}$ with
  distributive semilattices as objects.

  In a distributive meet-semilattice $A$ we can define the smallest filter
  containing two given filters $p$ and $q$ by
  $$
    \langle p, q \rangle = \{ a \wedge b \mid a \in p, b \in q \}.
  $$
  To see that this is again a filter, note that 
  $\top \in \langle p, q \rangle$ because $\top = \top \wedge \top$
  and $\top \in p, \top \in q$;
  the set $\langle p, q \rangle$ is closed under meets because
  $p$ and $q$ are, so
  $(a \wedge b) \wedge (a' \wedge b') = (a \wedge a') \wedge (b \wedge b')$;
  and it is up-closed precisely by distributivity.
  
  The following proposition also follows
  from~\cite[Section II.5, Lemma~1]{Gra78}.

\begin{prop}
  If $A$ is a distributive semilattice, then so is $\fun{F}A$.
\end{prop}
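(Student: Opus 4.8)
The plan is to show that $\fun{F}A$ satisfies the distributivity condition directly: given filters $p, q, r \in \fun{F}A$ with $p \cap q \subseteq r$, I must produce filters $p', q' \in \fun{F}A$ with $p \subseteq p'$, $q \subseteq q'$ and $r = p' \cap q'$. The natural candidates are the smallest filters extending $p$ and $r$, and $q$ and $r$, respectively; that is, I would set $p' = \langle p, r \rangle$ and $q' = \langle q, r \rangle$, which are legitimate filters of $A$ by the discussion preceding the proposition, since $A$ is distributive. Clearly $p \subseteq p'$ and $q \subseteq q'$ and $r \subseteq p' \cap q'$ (as $r$ is contained in both), so the only thing to check is the reverse inclusion $p' \cap q' \subseteq r$.

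So suppose $c \in \langle p, r \rangle \cap \langle q, r \rangle$. Unfolding the definition of $\langle \cdot, \cdot \rangle$, we can write $c = a \wedge s$ with $a \in p$, $s \in r$, and also $c = b \wedge t$ with $b \in q$, $t \in r$. I would like to conclude $c \in r$. Since $r$ is a filter (up-closed and closed under meets) and $s, t \in r$, it suffices to show $c \geq s \wedge t \wedge (\text{something in } r)$ — but more to the point, I want to bring the hypothesis $p \cap q \subseteq r$ into play. The key observation is that $a \wedge b \in p \cap q$ (as $a \in p$, $b \in q$, and $p, q$ are up-closed: $a \wedge b \leq a$ gives... no — $a \wedge b \leq a$ means $a \wedge b$ is below $a$, so $a \wedge b \in p$ requires $p$ to be a downset, which it is not). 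Let me instead use that $p$ and $q$ are up-closed to note $a \in p$ and $b \in q$, hence $a \wedge b$ lies in the smallest filter generated by... Actually the clean move: since $a \in p$ we have ${\uparrow}a \subseteq p$, similarly ${\uparrow}b \subseteq q$; and for any filter, $a \wedge b$ belongs to both $p$ and $q$ precisely when... $a \wedge b \leq a$ so $a \wedge b \in p$ iff $p \ni a \wedge b$, which holds because filters are closed under meets only if both elements are in — but $b$ need not be in $p$. The correct statement is: $p \cap q$ is a filter containing all elements above some element of $p$ that is also above some element of $q$. I would therefore argue: from $c = a \wedge s = b \wedge t$ with $a \in p$, $b \in q$ and $s, t \in r$, set $r' = s \wedge t \in r$. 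Then $c = a \wedge s \geq a \wedge r'$ and $c = b \wedge t \geq b \wedge r'$, so $c \in {\uparrow}(a \wedge r') \cap {\uparrow}(b \wedge r')$; since $a \wedge r' \in p$ (as $a \in p$, $r'$... no, $r' \notin p$ in general).

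I should be more careful and lean on distributivity of $A$ itself via the up-closure of $\langle p, r\rangle$. Here is the step I expect to be the crux, and which I would write out fully: from $c = a \wedge s$ with $a \in p$, $s \in r$, and $c = b \wedge t$ with $b \in q$, $t \in r$, I claim $a \wedge b \wedge s \wedge t = c$ — indeed $a \wedge b \wedge s \wedge t \leq a \wedge s = c$, and conversely $c = a \wedge s = c \wedge c = (a \wedge s) \wedge (b \wedge t) = a \wedge b \wedge s \wedge t$, so equality holds. Now let $u = s \wedge t \in r$, so $c = (a \wedge b) \wedge u$. Since $a \in p \supseteq p \cap q$... the point is $a \wedge b$ need not be in $p \cap q$. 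But $a \wedge b \leq a$, so $a \wedge b \in {\downarrow}a$; to get into $p \cap q$ I need an element above $a \wedge b$ lying in both $p$ and $q$. Take such an element to be $a \vee b$ — not available in a semilattice. This is exactly where I expect the real obstacle: the naive choice $\langle p, r\rangle, \langle q, r\rangle$ may not work, and instead I would follow the hint and cite \cite[Section II.5, Lemma~1]{Gra78} or mimic its proof. The honest plan: reduce to showing that for $c \in \langle p, r\rangle \cap \langle q, r\rangle$ we have $c \in r$, rewrite $c = (a \wedge b) \wedge u$ with $u \in r$ as above, then apply distributivity of $A$ to the inequality $a \wedge b \leq c$ (wait — we have $a \wedge b \geq c$, not $\leq$). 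Apply distributivity to $a \wedge b \wedge u = c$, i.e.\ to $(a \wedge b) \wedge u \leq c$: there exist $x \geq a \wedge b$ and $y \geq u$ with $c = x \wedge y$; then $y \in r$ since $u \in r$ and $r$ up-closed, and I must argue $x \in p \cap q \subseteq r$ — which would follow if $x \geq a$ and $x \geq b$, but distributivity only gives $x \geq a \wedge b$. So I would instead apply distributivity twice, peeling off $a$ and $b$ separately, to land in $p$ and in $q$; the bookkeeping of iterated applications of the distributivity axiom of $A$ is the main obstacle, and it is precisely the content of Grätzer's lemma, which I would invoke to keep the argument short.
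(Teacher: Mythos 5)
Your setup coincides with the paper's: take $p' = \langle p, r\rangle$ and $q' = \langle q, r\rangle$, note these are filters because $A$ is distributive, and reduce everything to the inclusion $p' \cap q' \subseteq r$. But that inclusion is exactly where your argument stops. You correctly diagnose that $a \wedge b$ need not lie in $p \cap q$ and that one application of distributivity of $A$ only yields an element above $a \wedge b$ (not above $a$ and $b$ separately), but the repair you gesture at (``apply distributivity twice, peeling off $a$ and $b$ separately'') is never carried out and is not obviously workable as stated, and deferring to \cite[Section II.5, Lemma~1]{Gra78} outsources essentially the statement being proved --- the paper cites it only as an alternative reference, not as an ingredient. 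So as written there is a genuine gap at the crux of the proof; your worry that the ``naive'' choice of $p'$ and $q'$ might not work is also unfounded, since it does.

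The missing idea is elementary and requires no distributivity beyond the fact (already established) that $p'$ and $q'$ are filters. Given $c \in p' \cap q'$, write $c = a \wedge s$ with $a \in p$ and $s \in r$. Then $c \leq a$, and since $p' \cap q'$ is up-closed (an intersection of filters is a filter), we get $a \in p' \cap q'$; in particular $a \in q'$, so $a = b \wedge t$ with $b \in q$ and $t \in r$. Now $a \leq b$ and $a \in p$, so $b \in p$ by up-closure of $p$, hence $b \in p \cap q \subseteq r$. Finally $c = a \wedge s = b \wedge t \wedge s$ with $b, t, s \in r$, so $c \in r$. This two-step decomposition, passing through the up-closure of $p' \cap q'$, is the paper's argument and is precisely what your proposal lacks.
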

\begin{proof}
  Let $p, q, r \in \fun{F}A$ be such that $p \cap q \subseteq r$.
  Let $p' = \langle p, r \rangle$ and $q' = \langle q, r \rangle$.
  We claim that $p' \cap q' = r$.
  For the inclusion from right to left, observe
  $$
    r = (p \cap q) \cup r = (p \cup r) \cap (q \cup r) \subseteq p' \cap q'.
  $$
  For the converse we need to work a bit harder.
  Suppose $x \in p' \cap q'$. Since $x \in p'$ we can write $x = y \wedge z$
  for some $y \in p$ and $z \in r$.
  Moreover, $x \leq y$ (because $x \wedge y = (y \wedge z) \wedge y = y \wedge z = x$),
  so $y \in p' \cap q'$. Using that $y \in q'$ we write $y = y' \wedge z'$
  for some $y' \in q$ and $z' \in r$.
  Since $y \in p$ and $y \leq y'$ we must have $y' \in p$, so that
  $y' \in p \cap q$. In particular this means $y' \in r$.
  We now have $x = y \wedge z = y' \wedge z' \wedge z$,
  where $y', z', z \in r$. This implies $x \in r$, and hence
  $p' \cap q' \subseteq r$.
\end{proof}

\begin{cor}
  The dual adjunction between $\cat{SL}$ and itself restricts to
  a dual adjunction between $\cat{DSL}$ and itself.
\end{cor}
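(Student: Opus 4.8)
The plan is to show that every ingredient of the dual adjunction between $\cat{SL}$ and itself — the functor $\fun{F}$, the units $\eta_A$, their naturality, and the triangle identity — already lives inside $\cat{DSL}$, so that no verification beyond the preceding proposition is required.

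First I would check that $\fun{F}$ cuts down to an endofunctor on $\cat{DSL}$. On objects this is exactly the preceding proposition: if $A$ is distributive then so is $\fun{F}A$. On morphisms there is nothing further to prove, since $\fun{F}h = h^{-1}$ is already a semilattice morphism and $\cat{DSL}$ is a \emph{full} subcategory of $\cat{SL}$; hence $\fun{F}h$ is automatically a morphism of $\cat{DSL}$ whenever its domain and codomain are.

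Next I would observe that for $A \in \cat{DSL}$ the semilattice $\fun{FF}A$ is distributive as well, by applying the preceding proposition twice, so that the unit $\eta_A : A \to \fun{FF}A$ is in particular a morphism of $\cat{DSL}$. Because the inclusion $\cat{DSL} \hookrightarrow \cat{SL}$ is full and faithful, the naturality squares for $\eta$ and the triangle identity $\fun{F}\eta_A \circ \eta_{\fun{F}A} = \id_{\fun{F}A}$ — both already established over $\cat{SL}$ — are equations between arrows that all lie in $\cat{DSL}$, hence hold there verbatim. Thus $\fun{F}$ together with $(\eta_A)_{A \in \cat{DSL}}$ witnesses $\cat{DSL}$ as dually adjoint to itself.

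There is, in fact, no real obstacle: the only piece of mathematical content is the closure of $\cat{DSL}$ under $\fun{F}$, which is precisely the proposition just proved, and everything else is routine bookkeeping about full subcategories.
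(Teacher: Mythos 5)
Your proposal is correct and matches the paper's intent: the corollary is stated as an immediate consequence of the preceding proposition, with the restriction of $\fun{F}$, the units, naturality, and the triangle identity all carrying over verbatim because $\cat{DSL}$ is a full subcategory closed under $\fun{F}$. The only mathematical content is indeed the closure of $\cat{DSL}$ under $\fun{F}$, which is exactly what the proposition supplies.
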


\subsection*{Implicative semilattices}
  A semilattice $A$ is \emph{implicative} if we can define a binary
  operation $\mto$ such that $x \leq y \mto z$ iff $x \wedge y \leq z$
  for all $x, y, z \in A$.
  An implicative semilattice homomorphism is a map that preserves
  top, meets and implications.
  We write $\cat{ISL}$ for the category of implicative semilattice and
  homomorphisms.

\begin{prop}\label{prop:dist-imp}
  If $A$ is a distributive semilattice, then $\fun{F}A$ is an implicative semilattice.
\end{prop}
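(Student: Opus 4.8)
The plan is to show that $\fun{F}A$ is implicative by exhibiting, for filters $q, r \in \fun{F}A$, an explicit candidate for the implication $q \mto r$ and then verifying the residuation law $p \leq q \mto r$ iff $p \cap q \leq r$, i.e.\ $p \subseteq q \mto r$ iff $p \cap q \subseteq r$. The natural guess is
\[
  q \mto r = \{ a \in A \mid \text{for all } b \in q,\ a \wedge b \in r \}.
\]
First I would check that this is actually a filter: it contains $\top$ since $\top \wedge b = b$, wait---more carefully, $\top \in q \mto r$ iff $b \in r$ for all $b \in q$, which need not hold, so instead one notes $q \mto r$ is up-closed (if $a \in q \mto r$ and $a \leq a'$ then $a' \wedge b \geq a \wedge b \in r$, so $a' \wedge b \in r$ by up-closure of $r$) and closed under meets (if $a, a' \in q \mto r$ then $(a \wedge a') \wedge b = (a \wedge b) \wedge (a' \wedge b) \in r$). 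It is nonempty because any element of $r$ lies in it: if $c \in r$ then $c \wedge b \leq c$, hence $c \wedge b \in r$. So $q \mto r$ is a filter, in fact $r \subseteq q \mto r$.

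Next I would verify the residuation law. For the easy direction, suppose $p \subseteq q \mto r$ and take $x \in p \cap q$. Then $x \in q \mto r$ and $x \in q$, so $x = x \wedge x \in r$; thus $p \cap q \subseteq r$. For the converse, suppose $p \cap q \subseteq r$ and take $a \in p$; I must show $a \in q \mto r$, i.e.\ $a \wedge b \in r$ for every $b \in q$. Since $a \in p$ and $p$ is up-closed, and $b \in q$ with $q$ up-closed, we get... but $a \wedge b$ need not lie in $p$ or in $q$ directly. The point is that $a \wedge b \leq a$ so $a \wedge b \in p$ (up-closure, since $a \in p$ and $a \leq$... no, $a \wedge b \leq a$ goes the wrong way). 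Here is where distributivity of $A$ enters: this is exactly the place the hypothesis is needed, mirroring the preceding proposition. Using the construction $\langle p, q\rangle = \{ a' \wedge b' \mid a' \in p, b' \in q\}$, which by the earlier discussion is a filter in $\fun{F}A$ when $A$ is distributive, one has $a \wedge b \in \langle p, q \rangle$; the claim $\langle p, q \rangle \subseteq r$ would follow if $\langle p, q\rangle$ were the intersection... Actually the cleanest route: replace the displayed definition by $q \mto r = \{ a \in A \mid \langle {\uparrow}a, q \rangle \subseteq r\}$ or, better, prove directly that $p \cap q \subseteq r$ together with distributivity forces $a \wedge b \in r$ by an argument parallel to the second half of the proof of Proposition~\ref{prop:dist-imp}'s predecessor---factoring elements and using that $r$ is a filter.

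The main obstacle, then, is precisely the converse inclusion in the residuation law, and it is the same obstacle as in the distributivity-of-$\fun{F}A$ proposition: one must leverage distributivity of $A$ to move from "$p \cap q \subseteq r$" to membership of individual meets $a \wedge b$ in $r$, since filters are not closed downward. I expect the argument to reuse the element-factoring technique from that earlier proof almost verbatim, possibly by first establishing the auxiliary fact that for filters $p, q$ in a distributive $A$, $p \cap \langle q, r\rangle \subseteq \langle p \cap q, r \rangle$ or similar, and concluding. Once residuation is established, implicativeness of $\fun{F}A$ is immediate, and no further work (such as checking morphisms) is needed for this statement.
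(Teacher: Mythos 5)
Your candidate for the implication is the wrong set, and that is exactly why the residuation law gets stuck in the second direction. Writing $D = \{ x \in A \mid x \wedge y \in r \text{ for all } y \in q \}$, the hard direction ``$p \cap q \subseteq r$ implies $p \subseteq D$'' is simply \emph{false}, even for distributive $A$, so no amount of element-factoring will rescue it. Concretely, let $A = \{0, u, v, 1\}$ be the four-element Boolean lattice viewed as a meet-semilattice (it is distributive in the sense of the paper: factor via joins), and take $q = {\uparrow}u$, $r = \{1\}$, $p = {\uparrow}v$. Then $p \cap q = \{1\} \subseteq r$, but $v \in p$, $u \in q$ and $v \wedge u = 0 \notin r$, so $p \not\subseteq D$; in fact $D = \emptyset$ here, so $D$ is not even a filter. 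Your nonemptiness argument already contains the symptom of this: from $c \wedge b \leq c \in r$ you concluded $c \wedge b \in r$, which uses down-closure, whereas filters are only up-closed (and indeed any nonempty upset contains $\top$, contradicting your own earlier observation that $\top \in D$ only when $q \subseteq r$). Your fallback $\{ a \mid \langle {\uparrow}a, q \rangle \subseteq r \}$ is the same set $D$ again, since $r$ is up-closed, so it inherits the same defect.

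The paper's proof uses the Heyting-style candidate $C = \{ x \in A \mid \text{for all } y \geq x, \text{ if } y \in q \text{ then } y \in r \}$, and the division of labour is the opposite of what you anticipated. For $C$ both directions of residuation are immediate and need no distributivity: if $p \subseteq C$ and $x \in p \cap q$ then $x \geq x$ gives $x \in r$; conversely if $p \cap q \subseteq r$, $x \in p$ and $y \geq x$ with $y \in q$, then $y \in p$ by up-closure of $p$, so $y \in p \cap q \subseteq r$. Distributivity is used exactly once, to show that $C$ is closed under binary meets, and this is where the element-factoring from the preceding proposition reappears: if $x, x' \in C$ and $x \wedge x' \leq z$ with $z \in q$, factor $z = y \wedge y'$ with $x \leq y$ and $x' \leq y'$; then $y, y' \geq z$ lie in $q$, hence in $r$ since $x, x' \in C$, and so $z = y \wedge y' \in r$ because $r$ is a filter. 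So the missing idea in your proposal is the correct definition of $q \mto r$; once that is in place, the residuation check is the trivial part and distributivity is spent on proving the candidate is a filter.
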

\begin{proof}
  Let $a, b \in \fun{F}A$.
  We prove that $C = \{ x \in X \mid \text{ if } x \leq y
                            \text{ and } y \in a
                           \text{ then } y \in b \}$ is again a filter.
  Clearly it is up-closed and contains $\top$. Suppose $x, y \in C$ and let
  $z$ be such that $x \wedge y \leq z$ and $z \in a$.
  Since $A$ is assumed to be distributive, we can find $x', y'$ such that
  $x \leq x'$, $y \leq y'$ and $z = x' \wedge y'$.
  We have $x' \in a$, because $z \leq x'$
  (indeed, $z \wedge x' = x' \wedge y' \wedge x' = x' \wedge y' = z$)
  and for the same reason $y' \in a$. But then we must have $x', y' \in b$,
  because $x \leq x'$ and $y \leq y'$, and since $b$ is a filter
  $z = x' \wedge y' \in b$. Therefore $C$ is a filter of $A$,
  so $C = a \mto b$.
\end{proof}

  Since the semilattice underlying an implicative semilattice is
  automatically distributive, this implies:

\begin{cor}\label{cor:imp-imp}
  If $A$ is an implicative semilattice, then so is $\fun{F}A$.
\end{cor}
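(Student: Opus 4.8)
The plan is to reduce the statement to Proposition~\ref{prop:dist-imp}. That proposition already says that $\fun{F}A$ is implicative whenever $A$ is a \emph{distributive} semilattice, so it suffices to check that the semilattice underlying any implicative semilattice $A$ is distributive in the sense defined above. Given this, Corollary~\ref{cor:imp-imp} is immediate: an implicative semilattice is in particular distributive, and then $\fun{F}A$ is implicative by Proposition~\ref{prop:dist-imp}.

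So the actual work lies in showing: if $A$ is implicative, then $a \wedge b \leq c$ implies there are $a', b'$ with $a \leq a'$, $b \leq b'$ and $a' \wedge b' = c$. The choice I would make is $a' = b \mto c$ and $b' = (b \mto c) \mto c$. The only input is the defining adjunction $x \leq y \mto z \iff x \wedge y \leq z$, from which I extract two facts used throughout: for every $u$ one has $u \wedge (u \mto c) \leq c$ (instantiate the adjunction at $u \mto c \leq u \mto c$) and $c \leq u \mto c$ (instantiate at $c \wedge u \leq c$). Then $a \leq a'$ holds because $a \wedge b \leq c$; $b \leq b'$ is the first fact with $u = b$; $c \leq a' \wedge b'$ is the second fact applied with $u = b$ and with $u = b \mto c$; and $a' \wedge b' \leq c$ is the first fact with $u = b \mto c$. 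Hence $a' \wedge b' = c$, and $A$ is distributive.

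Since everything reduces to routine manipulations of the single residuation equivalence, I do not expect a genuine obstacle; the only mildly non-obvious point is guessing the witnesses, and in particular the \emph{iterated} implication $(b \mto c) \mto c$ for $b'$ — the naive candidate $a \mto c$ need not lie above $a$. Alternatively one could simply cite the well-known fact that implicative semilattices are distributive and then appeal to Proposition~\ref{prop:dist-imp}.
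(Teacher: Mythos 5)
Your proof is correct and follows the same route as the paper: the paper derives the corollary from Proposition~\ref{prop:dist-imp} by noting that the semilattice underlying an implicative semilattice is automatically distributive. You additionally supply an explicit verification of that distributivity fact (with witnesses $a' = b \mto c$ and $b' = (b \mto c) \mto c$), which the paper simply asserts, and your residuation computations check out.
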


\section{Frames for the meet-implication-fragment of intuitionistic logic}\label{sec:base}

  In this section we describe semantics and a duality for the base logic
  underlying the modal logics discussed in this paper: the meet-implication
  fragment of intuitionistic logic.
  The language $\lan{L}$ of this fragment is given by the grammar
  $$
    \phi ::= \top \mid p \mid \phi \wedge \phi \mid \phi \to \phi
  $$
  where $p$ ranges over $\Prop$, some set of proposition letters.
  The algebraic semantics of the fragment are given by implicative semilattices. 
  They have been well-researched, see e.g.~\cite{Nem65}.
  We give a Hilbert-style axiomatisation of this logic.

\begin{defi}
  The Hilbert-style deductive system for the meet-implication
  fragment of intuitionistic propositional logic is given by the
  axioms
  \begin{enumerate}[(H$_1$)]
    \item $p \to (q \to p)$
    \item $(p \to (q \to r)) \to ((p \to q) \to (p \to r))$
    \item $(p \wedge q) \to p$
    \item $(p \wedge q) \to q$
    \item $p \to (q \to (p \wedge q))$
    \item $\top$
  \end{enumerate}
  If $\Gamma$ is a set of formulae and $\phi$ is a formula, we 
  say that $\phi$ \emph{is deducible in $\Hilb$} from $\Gamma$ 
  if $\Gamma \entails_\Hilb \phi$ can be derived using the
  rules 
  \[ \mbox{(\assum)}\frac{}{\Gamma \entails_\Hilb \phi} (\mbox{if } \phi \in \Gamma)
  \qquad\quad
     \mbox{(\axio)}\frac{}{\Gamma \entails_\Hilb \phi} (\mbox{if } \phi \in H) 
  \qquad\quad
   \mbox{(\mopo)}\frac{\Gamma \entails_\Hilb \phi \qquad \Gamma \entails_\Hilb
  \phi \to \psi}{\Gamma \entails_\Hilb \psi}
  \]
  where $H$ is the set of substitution instances of the axioms
  \ref{it:H1} to \ref{it:H6} above.
  We write $\Hilb \entails \phi$ if $\emptyset \entails_\Hilb \phi$
  and we let $\bf{MI}$ denote the collection of formulae $\phi$ such that
  $\Hilb \entails \phi$.
\end{defi}

  It is proved in the appendix that this Hilbert style axiomatisation
  indeed corresponds to the equational logic of implicative semilattices.
  
  The fact that the meet-implication reduct of a Heyting algebra is an
  implicative semilattice implies that any sound semantics for
  intuitionistic logic also provides a sound semantics for $\mathbf{MI}$.
  The best-known frame semantics of intuitionistic logic is given by posets,
  often called intuitionistic Kripke frames.
  So intuitionistic Kripke frames can be taken as frame semantics for $\mathbf{MI}$.

  
  However, we will use a more restricted version of intuitionistic Kripke frames
  as our semantics.
  The main advantages of this restriction are that it lies closer to the duality
  between frame semantics and implicative semilattices we intend to use.
  Moreover, its completeness implies completeness with respect to intuitionistic
  Kripke frames in a straightforward way.
  We exploit the fact that the collection of filters of an implicative semilattice
  is again an implicative semilattice and use these as frame semantics
  for $\lan{L}$-formulae.
  Since every (implicative) semilattice carries an underlying partial order
  given by $x \leq y$ iff $x \wedge y = x$, we can think of the frame semantics
  as intuitionistic Kripke frames with extra properties.
  When using them as frame semantics, we refer to implicative semilattices
  as I-frames, both to distinguish them from the algebraic semantics,
  and because morphisms between I-frames are not the same as implicative
  semilattice homomorphisms.

\begin{defi}
  An \emph{M-frame} a semilattice viewed as intuitionistic Kripke frame.
  That is, an M-frame is is a poset $(X, \leq)$ with meets.
  We write $\cat{MF}$ for the category of M-frames and functions that
  preserve finite meets. (Indeed, $\cat{MF} \cong \cat{SL}$.)

  An \emph{I-frame} is an implicative semilattice regarded as a Kripke frame.
  An I-frame morphism from $(X, \leq)$ to $(X', \leq')$ is an $M$-frame 
  morphism $f$ such that $f(x) \leq' y'$ implies the existence of a $y \in X$
  such that $x \leq y$ and $f(y) = y'$.
  We write $\cat{IF}$ for the category of I-frames and I-frame morphisms.
\end{defi}

  As announced, I-frames are of course simply implicative semilattices,
  viewed as a special kind of intuitionistic Kripke frame.
  Besides, note that an I-frame morphism is an M-frame morphism that
  is bounded with respect to the underlying posets.
  We now define I-models as I-frames with a valuation.
  Rather than using upsets as denotations of formulae in our language, we use
  \emph{filters} (i.e.~upsets closed under finite meets).
%
  This is natural because the collection of filters
  is closed under intersections, but not under unions.


\begin{defi}\label{def:I-model}
  A \emph{valuation} for an I-frame $(X, \leq)$
  is a function $V$ that assigns to each proposition letter $p$
  a filter of $(X, \leq)$, and an \emph{I-model} is an I-frame together with a valuation.
  An \emph{I-model morphism} from $(X, \leq, V)$ to $(X', \leq', V')$ is
  an I-frame morphism $f : (X, \leq) \to (X', \leq')$ satisfying $V = f^{-1} \circ V'$.
  
  The interpretation of $\lan{L}$-formulae in an I-model $\mo{M} = (X, \leq, V)$
  is given recursively via
  \begin{align*}
    \mo{M}, x \Vdash \top
      &\quad\text{always} \\
    \mo{M}, x \Vdash p
      &\iff x \in V(p) \\
    \mo{M}, x \Vdash \phi \wedge \psi
      &\iff \mo{M}, x \Vdash \phi \text{ and } \mo{M}, x \Vdash \psi \\
    \mo{M}, x \Vdash \phi \to \psi
      &\iff \forall y \in X, \text{ if } x \leq y \text{ and } \mo{M}, y \Vdash \phi \text{ then } \mo{M}, y \Vdash \psi
  \end{align*}


  Let $\mo{M} = (X, \leq, V)$ be an I-model.
  We write $\mo{M} \Vdash \phi$ if $\mo{M}, x \Vdash \phi$
  for all $x \in X$. For an I-frame $(X, \leq)$ we write
  $(X, \leq) \Vdash \phi$ if $\mo{M} \Vdash \phi$ for every
  I-model based $\mo{M}$ on $(X, \leq)$.
\end{defi}

  Clearly every I-model is also an intuitionistic Kripke model,
  and formulae are interpreted in the same way as in intuitionistic Kripke models.
  Furthermore, an I-model morphism is in particular a bounded morphism between
  Kripke models. Since these preserve truth of intuitionistic formulae 
  we have:

\begin{prop}
  Let $h : (X, \leq, V) \to (X', \leq', V')$ be an I-model morphism.
  Then for all $x \in X$ and $\phi \in \lan{L}$ we have
  $$
    x \Vdash \phi \iff f(x) \Vdash \phi.
  $$
\end{prop}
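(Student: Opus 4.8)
The plan is to prove the statement by induction on the structure of the formula $\phi \in \lan{L}$, using the fact that an I-model morphism is in particular a bounded morphism (p-morphism) between the underlying intuitionistic Kripke models, together with the standard preservation argument for such morphisms. The base cases are immediate: for $\phi = \top$ both sides hold vacuously, and for $\phi = p$ a proposition letter, the condition $V = f^{-1} \circ V'$ gives $x \in V(p) \iff x \in f^{-1}(V'(p)) \iff f(x) \in V'(p)$, which is exactly $x \Vdash p \iff f(x) \Vdash p$.

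For the inductive step, assume the equivalence holds for $\phi$ and $\psi$. The conjunction case is routine: $x \Vdash \phi \wedge \psi$ iff $x \Vdash \phi$ and $x \Vdash \psi$, which by the induction hypothesis is equivalent to $f(x) \Vdash \phi$ and $f(x) \Vdash \psi$, i.e.\ $f(x) \Vdash \phi \wedge \psi$. The implication case is where the $p$-morphism conditions are used. For the forward direction, suppose $x \Vdash \phi \to \psi$ and let $y' \in X'$ with $f(x) \leq' y'$ and $y' \Vdash \phi$; since $f$ is an I-frame morphism (hence bounded) there is $y \in X$ with $x \leq y$ and $f(y) = y'$, and by the induction hypothesis $y \Vdash \phi$, so $y \Vdash \psi$, whence $f(y) = y' \Vdash \psi$ again by the induction hypothesis. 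For the backward direction, suppose $f(x) \Vdash \phi \to \psi$ and let $y \in X$ with $x \leq y$ and $y \Vdash \phi$; since $f$ is monotone, $f(x) \leq' f(y)$, and by the induction hypothesis $f(y) \Vdash \phi$, so $f(y) \Vdash \psi$, and hence $y \Vdash \psi$ by the induction hypothesis.

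The only genuinely delicate point — and the one I would flag as the ``main obstacle'', though it is mild — is the forward direction of the implication case, since it is the single place where the boundedness (back-condition) of the I-frame morphism is essential; without it one could not lift the witness $y'$ above $f(x)$ to a witness $y$ above $x$. Everything else is bookkeeping. Since the interpretation clauses in Definition~\ref{def:I-model} coincide verbatim with the clauses for intuitionistic Kripke semantics, and I-model morphisms are by the remarks preceding the proposition exactly bounded morphisms of Kripke models preserving the valuation, this proposition is in fact an immediate instance of the classical invariance-under-$p$-morphisms result for intuitionistic logic; the proof can therefore be stated quite tersely, carrying out the induction only far enough to exhibit the use of the two defining conditions of an I-model morphism.
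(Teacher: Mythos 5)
Your proof is correct and matches the paper's approach: the paper simply observes that I-model morphisms are bounded morphisms of intuitionistic Kripke models and invokes the standard truth-preservation result, which is exactly the induction you carry out explicitly (including the correct use of the back-condition in the forward direction of the implication case).
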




  Towards a duality result between certain (general) I-frames and implicative semilattices,
  we now define \emph{descriptive} I-frames and corresponding morphisms.
  Apart from giving us completeness for $\mathbf{MI}$ with respect to I-frames,
  the duality underlies dualities for modal extensions in subsequent sections.

\begin{defi}
  A \emph{descriptive I-frame} is a tuple $(X, \leq, A)$
  consisting of an I-frame $(X, \leq)$ and a collection
  $A \subseteq \fun{F}(X, \leq)$ of filters such that
  \begin{enumerate}[$({I}_1)$]
    \item \label{it:IM1}
          $A$ is closed under $\cap$ and $\mto$, given by
          \begin{equation}\label{eq:mto}
            a \mto b
              = \{ x \in X \mid 
                   \text{for all } y \geq x,
                   \text{ if } y \in a
                   \text{ then } y \in b \};
          \end{equation}
    \item $A$ is differentiated: if $x \not\leq y$ in $(X, \leq)$ then there
          exists $a \in A$ such that $x \in a$ and $y \notin a$;
    \item $(X, \leq, A)$ is compact: every cover of $X$
          consisting of elements in $A$ and complements of elements in $A$
          has a finite subcover.
  \end{enumerate}
  A \emph{descriptive I-frame morphism} from $(X, \leq, A)$ to $(X', \leq', A')$
  is an I-frame morphism $h : (X, \leq) \to (X', \leq')$ such that
  $h^{-1}(a') \in A$ for all $a' \in A'$.
  Write $\cat{D\hyphen IF}$ for the category of descriptive I-frames and
  descriptive I-frame morphisms.
\end{defi}

  Alternatively, we can view a descriptive I-frame as a special kind of M-space.
  We shall use this in order to derive a duality between descriptive I-frames and implicative
  semilattices:
  First we define the subcategory $\cat{ISpace}$ of $\cat{MSpace}$ (Definition \ref{def:ISpace});
  second we prove that the category of descriptive I-frames and its morphisms is
  isomorphic to $\cat{ISpace}$ (Proposition \ref{prop:DIF-IS});
  and finally we show that the duality between M-spaces and
  semilattices from Theorem \ref{thm:msl-duality} restricts to a duality
  between I-spaces and \emph{implicative} semilattices (Theorem \ref{thm:IS-ISL}).
  Diagrammatically it looks as in the left diagram of Figure \ref{fig:int}.
  To illustrate that the situation is completely analogous to that of intuitionistic logic,
  we have depicted the isomorphisms and dualities for intuitionistic logics in the
  diagram on the right, where the horizontal dualities in the right column are known as Priestley
  duality and Esakia duality.
  
  \begin{figure}[H]
  $$
    \begin{tikzcd}[row sep=large,
                   column sep=large,
                   every matrix/.append style={draw, inner ysep=7pt, inner xsep=6pt, rounded corners}]
        & \cat{MSpace}
            \arrow[r, -, "\equiv^{\op}" below, "\text{Thm.~\ref{thm:msl-duality}}" above]
        & \cat{SL} \\
      \cat{D\hyphen IF}
            \arrow[r, -, "\cong" below, "\text{Prop.~\ref{prop:DIF-IS}}" above]
        & \cat{ISpace}
            \arrow[u, >->]
            \arrow[r, -, "\equiv^{\op}" below, "\text{Thm.~\ref{thm:IS-ISL}}" above]
        & \cat{ISL} \arrow[u, >->]
    \end{tikzcd}
    \qquad
    \begin{tikzcd}[row sep=large,
                   column sep=large,
                   every matrix/.append style={draw, inner ysep=7pt, inner xsep=6pt, rounded corners}]
        & \cat{Pries}
            \arrow[r, -, "\equiv^{\op}"]
        & \cat{DL} \\
      \cat{D\hyphen Krip}
            \arrow[r, -, "\cong"]
        & \cat{Esakia}
            \arrow[u, >->]
            \arrow[r, -, "\equiv^{\op}"]
        & \cat{HA} \arrow[u, >->]
    \end{tikzcd}
  $$
  \caption{Dualities for intuitionistic logic and its meet-implication fragment.}
  \label{fig:int}
  \end{figure}
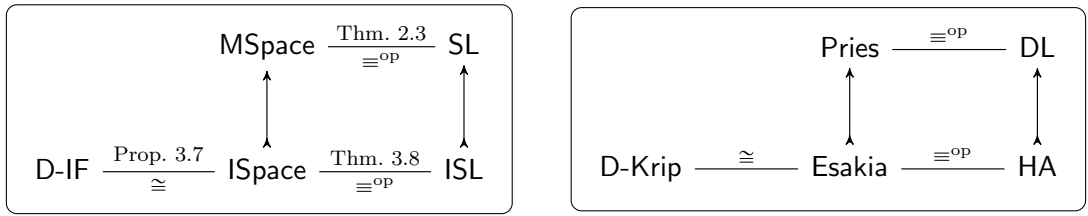

\begin{defi}\label{def:ISpace}
  An \emph{I-space} is an M-space whose collection of clopen filters is
  closed under the operation $\mto$ from Equation \eqref{eq:mto}.
  An I-space morphism from $\topo{X} \to \topo{Y}$ is a bounded M-space morphisms.
  We write $\cat{ISpace}$ for the category of I-spaces and I-space morphisms.
\end{defi}

\begin{prop}\label{prop:DIF-IS}
  The category $\cat{D\hyphen IF}$ is isomorphic to $\cat{ISpace}$.
\end{prop}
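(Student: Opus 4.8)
The plan is to exhibit functors $\Phi\colon\cat{D\hyphen IF}\to\cat{ISpace}$ and $\Psi\colon\cat{ISpace}\to\cat{D\hyphen IF}$ that are the identity on underlying maps, and to show they are mutually inverse; functoriality is then immediate and the isomorphism of categories follows from the object-level bijection. On objects, $\Phi$ sends a descriptive I-frame $(X,\leq,A)$ to $(X,\wedge,\top,\tau)$, where $\wedge$ and $\top$ are the semilattice operations of the implicative semilattice $(X,\leq)$ and $\tau$ is the topology generated by the subbase $A\cup\{a^c\mid a\in A\}$; and $\Psi$ sends an I-space $\topo{X}=(X,\wedge,\top,\tau)$ to $(X,\leq,\CFil\topo{X})$, where $\leq$ is the order induced by $\wedge$.

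For $\Phi$ to be well defined I must check that $\topo{X}:=\Phi(X,\leq,A)$ is an I-space. That $(X,\tau)$ is Hausdorff and zero-dimensional follows from differentiatedness — given $x\neq y$, say $x\not\leq y$, a witnessing $a\in A$ gives clopen sets $a$ and $a^c$ separating $x$ and $y$ — together with the fact that finite intersections of subbasic clopens are clopen; compactness is condition $(I_3)$; hence $(X,\tau)$ is a Stone space with a subbase of clopen filters and their complements, i.e.\ an M-space. The crucial step is to identify its clopen filters with $A$. Each $a\in A$ is clearly a clopen filter. Conversely, if $c$ is a clopen filter then, by a variant of the argument in Lemma~\ref{lem:closed-clp-fil} — using differentiatedness in place of the Stone property so that the separating filters stay inside $A$, and the compactness of the closed set $c$ — one obtains for each $x\notin c$ some $a^{(x)}\in A$ with $c\subseteq a^{(x)}$ and $x\notin a^{(x)}$; then $\{(a^{(x)})^c\mid x\notin c\}$ is an open cover of the compact set $X\setminus c$, so a finite subcover yields $a^{(x_1)}\cap\cdots\cap a^{(x_n)}\subseteq c$, and with $c\subseteq a^{(x_i)}$ for all $i$ this gives $c=a^{(x_1)}\cap\cdots\cap a^{(x_n)}\in A$, since $A$ is closed under $\cap$. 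As $A$ is also closed under $\mto$, $\topo{X}$ is an I-space.

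For $\Psi$ to be well defined I must check that $(X,\leq,\CFil\topo{X})$ is a descriptive I-frame; most requirements are routine ($\CFil\topo{X}$ consists of filters and is closed under $\cap$ and, because $\topo{X}$ is an I-space, under $\mto$), and the only delicate point is that $(X,\leq)$ is an implicative semilattice. I would first observe that $\CFil\topo{X}$ with the operation $\mto$ of~\eqref{eq:mto} is itself an implicative semilattice: the residuation equivalence $c\subseteq a\mto b\iff c\cap a\subseteq b$ holds for every up-closed $c$ straight from~\eqref{eq:mto}, and $a\mto b$ lies in $\CFil\topo{X}$ precisely because $\topo{X}$ is an I-space. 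Theorem~\ref{thm:msl-duality} then gives $\topo{X}\cong\TFil\CFil\topo{X}$ as M-spaces, so the underlying semilattice $(X,\wedge,\top)$ is isomorphic to $\fun{F}(\CFil\topo{X})$, which is implicative by Corollary~\ref{cor:imp-imp}; hence $(X,\leq)$ is an I-frame. Differentiatedness and condition $(I_3)$ follow from the same isomorphism and the compactness of $(X,\tau)$: under $\topo{X}\cong\TFil\CFil\topo{X}$ one has $x\leq y$ iff $\{a\in\CFil\topo{X}\mid x\in a\}\subseteq\{a\in\CFil\topo{X}\mid y\in a\}$, and any cover of $X$ by clopen filters and their complements is an open cover of the compact space $(X,\tau)$.

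Finally I would check that $\Phi$ and $\Psi$ are mutually inverse and compatible with morphisms. On objects, $\Psi\Phi(X,\leq,A)=(X,\leq,A)$ by the identification of clopen filters above, and $\Phi\Psi\topo{X}=\topo{X}$ because the topology of an M-space is generated by the subbase of all its clopen filters together with their complements (any subbase of clopen filters and complements that generates the topology is contained in this collection, all of whose members are open, so both generate the same topology). For morphisms: a descriptive I-frame morphism $h$ is continuous for the generated topologies, since $h^{-1}$ of a subbasic set is $h^{-1}(a')\in A$ or its complement, and as it is already bounded and meet-preserving it is an I-space morphism; conversely a continuous, bounded semilattice morphism $h$ has $h^{-1}(a')\in\CFil\topo{X}$ for every clopen filter $a'$ (preimages of filters under semilattice morphisms are filters, and preimages of clopens under continuous maps are clopen), so $h$ is a descriptive I-frame morphism. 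I expect the main obstacle to be the identification of the clopen filters of the generated topology with $A$, in particular the compactness argument yielding a \emph{finite} intersection, with the implicativity of the underlying semilattice of an I-space a close second.
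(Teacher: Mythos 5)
Your proof is correct and follows essentially the same route as the paper, whose proof is only a sketch giving exactly your two constructions (the topology generated by $A\cup -A$ in one direction, the clopen filters in the other). The details you supply — notably the compactness argument identifying the clopen filters of the generated topology with $A$, and the use of the duality together with Corollary~\ref{cor:imp-imp} to see that the underlying semilattice of an I-space is implicative — are sound and fill in precisely what the paper omits.
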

\begin{proof}[Proof sketch]
  We only give the constructions.
  If $\topo{X}$ is an I-space with underlying set $X$ we can define an order
  $\leq$ on $X$ by $x \leq y$ if $x \wedge y = x$.
  Then the tuple $(X, \leq, \CFil\topo{X})$ is a descriptive I-frame.
  
  Conversely, for a descriptive I-frame $(X, \leq, A)$ we already know that
  $(X, \leq)$ is a poset with a top element and binary meets, because it is
  an I-frame, so we have a semilattice $(X, \wedge, \top)$.
  Generate a topology $\tau_A$ from the subbase $A \cup -A$, where
  $-A = \{ X \setminus a \mid a \in A \}$. Then $\topo{X} = (X, \wedge, \top, \tau_A)$
  is an I-space.
\end{proof}


  Proposition \ref{prop:DIF-IS} allows us to use the notions of
  a descriptive I-frame and an I-space interchangeably,
  and henceforth we will do so.
  We now prove that the category if I-spaces is dually equivalent to
  the category of implicative semilattices. This ultimately proves that
  descriptive I-frames are dual to implicative semilattices.

\begin{thm}\label{thm:IS-ISL}
  We have a dual equivalence $\cat{ISpace} \equiv^{\op} \cat{ISL}$.
\end{thm}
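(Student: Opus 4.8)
The plan is to restrict the dual equivalence $\cat{MSpace} \equiv^{\op} \cat{SL}$ of Theorem \ref{thm:msl-duality} along the two inclusions $\cat{ISpace} \hookrightarrow \cat{MSpace}$ and $\cat{ISL} \hookrightarrow \cat{SL}$. Concretely, I would first check that $\CFil$ and $\TFil$ send the full subcategory $\cat{ISpace}$ into $\cat{ISL}$ and vice versa; since we already know they are mutually inverse (up to the natural isomorphisms witnessing the duality) on the larger categories, restricting them automatically yields a dual equivalence. The subtlety is that the morphisms also change on both sides — on $\cat{ISpace}$ we demand \emph{bounded} M-space morphisms, and on $\cat{ISL}$ we demand maps preserving $\mto$ in addition to meets and $\top$ — so I must verify that $\CFil$ and $\TFil$ restrict correctly at the level of morphisms, not just objects.

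For objects: if $\topo{X}$ is an I-space, then by definition its clopen filters are closed under $\mto$, so $\CFil\topo{X}$ is an implicative semilattice (one checks directly from Equation \eqref{eq:mto} that this $\mto$ satisfies the residuation law $x \le y \mto z$ iff $x \wedge y \le z$ in the poset of clopen filters, exactly as in Proposition \ref{prop:dist-imp}). Conversely, if $A$ is an implicative semilattice, then by Corollary \ref{cor:imp-imp} $\fun{F}A$ is implicative, and I need to see that the implication of $\fun{F}A$ agrees with the operation $\mto$ of Equation \eqref{eq:mto} applied to the \emph{clopen} filters $\tilde a$ of $\TFil A$, and that the result is again clopen — in fact one expects $\tilde a \mto \tilde b = \widetilde{a \mto b}$, which is clopen by construction of the topology; this identity is the computational heart of the object-level argument.

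For morphisms, the key point is the standard correspondence between boundedness of $h$ (as a map of posets/M-spaces) and preservation of implication by $h^{-1}$. I would prove: a continuous semilattice morphism $h : \topo{X} \to \topo{Y}$ between I-spaces is bounded if and only if $\CFil h = h^{-1}$ preserves $\mto$. The forward direction uses the p-morphism condition together with the explicit formula \eqref{eq:mto}: if $h(x) \le y'$ forces a witness $y \ge x$ with $h(y) = y'$, then $x \in h^{-1}(a' \mto b')$ implies $x \in h^{-1}(a') \mto h^{-1}(b')$, and conversely. The backward direction is the more delicate one: from preservation of $\mto$ by $h^{-1}$ on clopen filters one must reconstruct the lifting property, and here I expect to lean on differentiatedness and compactness of the I-space (equivalently, on the fact established in Lemma \ref{lem:closed-clp-fil} that closed filters are intersections of clopen ones) to pass from the abstract algebraic condition back to the concrete existence of $y$. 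This morphism-level equivalence — showing boundedness on one side matches $\mto$-preservation on the other — is the main obstacle; once it is in place, functoriality of the restricted $\CFil, \TFil$ and the fact that the unit and counit of the $\cat{MSpace} \equiv^{\op} \cat{SL}$ duality restrict to natural isomorphisms on the subcategories are routine, completing the proof.
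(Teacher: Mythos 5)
Your proposal follows essentially the same route as the paper: restrict the duality of Theorem \ref{thm:msl-duality}, handle objects via residuation of $\mto$ on clopen filters and the key identity $\tilde{a} \mto \tilde{b} = \widetilde{a \to b}$ (using Corollary \ref{cor:imp-imp}), and handle morphisms by matching boundedness with preservation of $\mto$, with compactness doing the work in the hard direction. The only difference is packaging: the paper proves the two morphism-level facts as separate claims ($\CFil f$ preserves $\mto$ for an I-space morphism $f$, and $\TFil h$ is bounded for an $\cat{ISL}$-morphism $h$) rather than your single ``bounded iff $h^{-1}$ preserves $\mto$'' equivalence, but the mathematical content is the same.
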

\begin{proof}
  Let $\topo{X}$ be an I-space.
  Since in particular it is an M-space we know from Theorem \ref{thm:msl-duality}
  that $\CFil\topo{X}$ is a semilattice with meets given by intersections.
  It is easy to show that $\mto$, defined as in~\eqref{eq:mto}, is residuated
  with respect to $\cap$ in $\CFil\topo{X}$, so in fact the latter is implicative.
  In the converse direction we have:

\begin{clm}
  If $A$ is an implicative semilattice, then $\TFil A$ is an $I$-space.
\end{clm}
\begin{subproof}[Proof of claim]
  By Theorem \ref{thm:msl-duality} $\TFil A$ is an M-space
  and it follows from Corollary \ref{cor:imp-imp} that the semilattice
  underlying $\TFil A$ is implicative, hence an I-frame. So we are left to show that
  the clopen filters are closed under $\mto$.
  We do this by proving that $\tilde{a} \mto \tilde{b} = \widetilde{a \to b}$
  for all clopen filters $\tilde{a}, \tilde{b} \in \CFil(\TFil A)$,
  where $\to$ is the implication from $A$ and $\mto$ is defined on
  the filters of $\TFil A$ as in~\eqref{eq:mto}.
  
  Suppose $p \in \widetilde{a \to b}$ and $p \subseteq q$,
  where $p, q \in \TFil A$. If $q \in \tilde{a}$, then
  we have $a \to b \in q$ and $a \in q$, hence
  $(a \to b) \wedge a \in q$. Since $(a \to b) \wedge a \leq b$ this
  implies $b \in q$, so $q \in \tilde{b}$.
  Therefore we have $p \in \tilde{a} \mto \tilde{b}$.
  
  Conversely, suppose $p \notin \widetilde{a \to b}$.
  Then $a \to b \notin p$ and $b \notin p$ because $b \leq a \to b$.
  We construct a filter $q \supseteq p$ such that $a \in q$ and $b \notin q$,
  thus witnessing $p \notin ({\downarrow}(\tilde{a} \cap \tilde{b}^c))^c = \tilde{a} \mto \tilde{b}$.
  Define $q = {\uparrow} \{ d \wedge a \mid d \in p \}$.
  This is up-closed by definition and one can easily see that $q$ is closed
  under finite meets, so $q$ is a filter. Suppose towards a contradiction that
  $a \to b \in q$, then there is $d \in p$ such that
  $d \wedge a \leq a \to b$, so $d \wedge a = d \wedge a \wedge a \leq b$
  so $d \leq a \to b$. This implies $a \to b \in p$, which is a contradiction.
  So $p \notin \widetilde{a \to b}$.
\end{subproof}

  So we have now established the duality on objects.
  We proceed to show that the restriction of $\CFil$ to the morphisms of $\cat{ISpace}$
  lands in $\cat{ISL}$.

\begin{clm}
  Let $f : \topo{X} \to \topo{X}'$ be an I-space morphism.
  Then $\CFil f : \CFil\topo{X}' \to \CFil\topo{X}$ is an
  implicative semilattice homomorphism.
\end{clm}
\begin{subproof}[Proof of claim]
  We already know that $\CFil f$ is a semilattice morphism
  because of the dual equivalence between $\cat{SL}$ and $\cat{MSpace}$,
  so we only need to show that it preserves implications.
  Let $a', b' \in \CFil\topo{X}'$. We show that
  $f^{-1}(a' \mto b') = f^{-1}(a') \mto f^{-1}(b')$.
  
  Suppose $x \in f^{-1}(a' \mto b')$. Let $y$ be any successor of $x$.
  If $y \in f^{-1}(a')$ then $f(y) \in a'$.
  Since $f(x) \in a' \mto b'$ and $f(x) \leq' f(y)$ this implies $f(y) \in b'$, so
  $y \in f^{-1}(b')$. Thus every successor $y$ of $x$ satisfies
  $y \notin f^{-1}(a')$ or $y \in f^{-1}(b')$, that is,
  $x \in f^{-1}(a') \mto f^{-1}(b')$.
  
  Conversely, suppose $x \in f^{-1}(a') \mto f^{-1}(b')$.
  We aim to show that $f(x) \in a' \mto b'$.
  Suppose $f(x) \leq' y'$ and $y' \in a'$.
  Then since $f$ is bounded there
  is $y \in X$ such that $x \leq y$ and $f(y) = y'$. By assumption
  $y \in f^{-1}(a')$ hence $y \in f^{-1}(b')$. This implies
  $y' = f(y) \in b'$, so that $f(x) \in a' \mto b'$, i.e.~%
  $x \in f^{-1}(a' \mto b')$.
\end{subproof}

  Conversely, applying $\TFil$ to a semilattice homomorphism $h$
  yields a continuous semilattice homomorphism.
  We show that if moreover $h$ preserves implication,
  then $\TFil h$ is bounded.
  The proof is a basic adaptation of a similar proof for 
  Esakia spaces and Heyting algebras.
  
  We make use of the following facts: If $A$ is a meet-semilattice then
  the space $\TFil A$ has a basis of clopens of the form
  $\tilde{a} \cap \tilde{b}_1^c \cap \cdots \cap \tilde{b}_m^c$,
  where $m$ is a non-negative integer and $a, b_1, \ldots, b_m \in A$.
  Furthermore, for any filter $p \in \TFil A$ we have
  $$
    \{ p \} = \bigcap \{ D \mid D \text{ is a basic clopen and } p \in D \}.
  $$

\begin{clm}
  Let $h : A \to A'$ be an implicative semilattice morphism.
  Then $\TFil h : \TFil A' \to \TFil A$ is bounded.
\end{clm}
\begin{subproof}[Proof of claim]
  Let $q' \in \TFil A'$ and $p \in \TFil A$ be such that
  $\TFil h(q') \subseteq p$.
  Let $C = \tilde{a} \cap \tilde{b}_1^c \cap \cdots \cap \tilde{b}_m^c$
  be a basic clopen containing $p$. Then
  \begin{align*}
    (\TFil h)^{-1}({\downarrow}c)
      &= (\TFil h)^{-1}({\downarrow}\tilde{a} \cap \tilde{b}_1^c \cap \cdots \cap \tilde{b}_m^c) \\
      &= (\TFil h)^{-1}(({\downarrow}\tilde{a} \cap \tilde{b}_1^c)
                    \cap ({\downarrow}\tilde{a} \cap \tilde{b}_m^c)) \\
      &= (\TFil h)^{-1}({\downarrow}\tilde{a} \cap \tilde{b}_1^c) \cap \cdots \cap
         (\TFil h)^{-1}({\downarrow}\tilde{a} \cap \tilde{b}_m^c) \\
      &= (\TFil h)^{-1}((\widetilde{a \to b_1})^c) \cap \cdots \cap
         (\TFil h)^{-1}((\widetilde{a \to b_m})^c) \\
      &= \widetilde{h(a \to b_1)}^c \cap \cdots \cap \widetilde{h(a \to b_m)}^c \\
      &= \widetilde{(h(a) \to h(b_1))}^c
         \cap \cdots \cap \widetilde{(h(a) \to h(b_m))}^c \\
      &= ({\downarrow}\tilde{h(a)} \cap \tilde{h(b_1)}^c) \cap \cdots \cap
         ({\downarrow}\tilde{h(a)} \cap \tilde{h(b_m)}^c) \\
      &= {\downarrow}\tilde{h(a)} \cap \tilde{h(b_1)}^c \cap \cdots \cap \tilde{h(b_m)}^c \\
      &= {\downarrow}(\TFil h)(\tilde{a}) \cap (\TFil h)(\tilde{b}_1)^c
                                           \cap \cdots \cap (\TFil h)(\tilde{b}_m)^c \\
      &= {\downarrow}((\TFil h)^{-1}(c))
  \end{align*}
  Since $\TFil h(q') \subseteq p$ we have
  $q' \in (\TFil h)^{-1}({\downarrow}C) = {\downarrow}((\TFil h)^{-1}(C))$
  for every basic clopen $C$ containing $p$.
  This implies ${\uparrow}q' \cap (\TFil h)^{-1}(C) \neq \emptyset$.

  Since the finite intersection of basic clopens containing $p$ is nonempty,
  compactness implies that
  $\bigcap \{ {\uparrow}q' \cap (\TFil h)^{-1}(C) \} \neq \emptyset$,
  where $C$ ranges over the basic clopens containing $p$.
  Since the intersection of basic clopens containing $p$ is $\{ p \}$
  we have
  \begin{equation*}
    \begin{split}
      \emptyset
        \neq \bigcap \big\{ {\uparrow}q' \cap (\TFil h)^{-1}(C) \big\}
        &= {\uparrow}q' \cap \bigcap (\TFil h)^{-1}(C) \\
        &= {\uparrow}q' \cap (\TFil h)^{-1}\Big(\bigcap C \Big)
        = {\uparrow}q' \cap (\TFil h)^{-1}(\{ p \})
    \end{split}
  \end{equation*}
  so there must exist $p' \in \fun{F}A'$ such that $q' \subseteq p'$ and
  $\TFil h(p') = p$. This proves that $\TFil h$ is bounded.
\end{subproof}

  In conclusion, the duality from Theorem \ref{thm:msl-duality} restricts to a
  duality between $\cat{ISL}$ and $\cat{MSpace}$.
\end{proof}
  
  Given that $\MI$ is sound and complete with respect to implicative
  semilattices by construction, this theorem implies completeness with
  respect to descriptive I-frames, where propositional variables are
  interpreted as admissible filters.

\begin{thm}\label{thm:MI-complete}
  The system $\MI$ is complete with respect to the classes of
  \begin{enumerate}
    \item descriptive I-frames;
    \item I-frames;
    \item intuitionistic Kripke frames.
  \end{enumerate}
\end{thm}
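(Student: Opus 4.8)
The plan is to derive all three completeness statements from the dual equivalence $\cat{ISpace} \equiv^{\op} \cat{ISL}$ of Theorem~\ref{thm:IS-ISL}, together with the fact — established in the appendix — that $\MI$ is sound and complete with respect to the equational theory of implicative semilattices. Since $\cat{ISpace} \cong \cat{D\hyphen IF}$ by Proposition~\ref{prop:DIF-IS}, it suffices to transport the algebraic completeness across this equivalence. Concretely, suppose $\phi \in \lan{L}$ is not a theorem of $\Hilb$. By algebraic completeness there is an implicative semilattice $A$ and an assignment of its elements to the proposition letters of $\phi$ under which $\phi$ is not interpreted as $\top_A$; equivalently, there is a point of the dual I-space $\TFil A$ at which $\phi$ fails. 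Unwinding the definitions, the Kripke semantics of Definition~\ref{def:I-model} on the I-frame underlying $\TFil A$, with each $p$ valued at the clopen filter $\tilde{a_p}$, agrees with the algebraic interpretation in $\CFil(\TFil A) \cong A$ (this is the content of $\eta_A$ being an isomorphism, plus the identity $\tilde{a}\mto\tilde{b} = \widetilde{a\to b}$ proved inside Theorem~\ref{thm:IS-ISL}). Hence $\phi$ fails at some point of a descriptive I-frame, proving~(1).

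**Next**, (1) implies (2) immediately: every descriptive I-frame has an underlying I-frame, and the valuations used in (1) are valuations of that I-frame in the sense of Definition~\ref{def:I-model}, so a countermodel on a descriptive I-frame is in particular an I-model; thus failure of $\phi$ on the class of I-frames follows. Finally, (2) implies (3) because every I-frame is, by definition, an implicative semilattice regarded as an intuitionistic Kripke frame, and by the remark following Definition~\ref{def:I-model} the interpretation of $\lan{L}$-formulae in an I-model coincides with their interpretation in the corresponding intuitionistic Kripke model; so a countermodel on an I-frame is a fortiori a countermodel on an intuitionistic Kripke frame. Soundness in all three cases is routine: each axiom $(H_1)$--$(H_6)$ and modus ponens are validated by the implicative-semilattice structure of the filter lattice, and this is inherited by the more general intuitionistic Kripke semantics.

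**The main obstacle** I anticipate is purely bookkeeping: one must check carefully that the recursively defined Kripke forcing relation $\Vdash$ on the I-frame underlying $\TFil A$ literally computes the same filter as the algebraic interpretation — in particular that the clause for $\phi \to \psi$, quantifying over $\leq$-successors, matches the residual $\mto$ on $\fun{F}A$. This is exactly the equation $\tilde{a}\mto\tilde{b} = \widetilde{a\to b}$ already verified in the proof of Theorem~\ref{thm:IS-ISL}, so it can be cited rather than reproved; the only genuine work is an induction on $\phi$ showing $\llbracket \phi \rrbracket^{\mo{M}} = \widetilde{\llbracket\phi\rrbracket^A}$ under the chosen valuation, with the atomic, $\top$ and $\wedge$ cases being trivial. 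I would present the proof as: (i) recall algebraic soundness and completeness of $\MI$; (ii) state the truth lemma identifying Kripke and algebraic semantics on duals of implicative semilattices; (iii) conclude (1), then observe the trivial implications (1)$\Rightarrow$(2)$\Rightarrow$(3).
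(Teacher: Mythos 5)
Your proposal is correct and follows essentially the same route as the paper: take a refuting algebraic model on an implicative semilattice, pass to the dual I-space (descriptive I-frame) via Theorem~\ref{thm:IS-ISL}, and then observe that descriptive I-frame countermodels are in particular I-models, which are in turn intuitionistic Kripke models. The paper leaves the truth-lemma bookkeeping implicit, whereas you spell it out (correctly citing $\tilde{a}\mto\tilde{b}=\widetilde{a\to b}$), but this is a matter of detail, not of approach.
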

\begin{proof}
  For the first item,
  if $\MI \not \vdash \phi$ then there is an algebraic model (i.e.~an
  implicative semilattice with a valuation) where $\phi$ does not evaluate to $\top$.
  But this implies that the dual I-space model does not satisfy $\phi$.
  
  The second and third item follow since, in particular,
  I-space models are I-models, and I-models are
  intuitionistic Kripke models
\end{proof}

  Finally, we note that $\MI$ is \emph{not} the same as the $(\top, \wedge, \to)$-fragment
  of classical propositional logic. This is witnessed by the fact that
  Peirce's formula
  $$
    ((p \to q) \to p) \to p
  $$
  is valid classically but not intuitionistically. Therefore, as a consequence
  of Theorem \ref{thm:MI-complete} it is not valid in $\MI$.

\section{A normal modal extension}\label{sec:normal}

  We enrich the meet-implication fragment $\mathbf{MI}$ of intuitionistic logic
  with a modal operator $\Box$ satisfying
  \begin{enumerate}[($\Box_1$)]
    \item\label{ax:box1} $\Box(p \wedge q) \leftrightarrow \Box p \wedge \Box q$; and
    \item\label{ax:box2} $\Box\top \leftrightarrow \top$.
  \end{enumerate}
  Write $\lan{L}_{\Box}$ for the extension of $\lan{L}$ with unary operator $\Box$,
  and $\mathbf{MI_{\Box}}$ for the extension of (the derivation system giving) $\mathbf{MI}$ with the
  axioms \ref{ax:box1} and \ref{ax:box2} and the congruence rule
  $$
    \mbox{(cong)}\frac{p \leftrightarrow q}{\Box p \leftrightarrow \Box q}.
  $$
  The algebras corresponding to $\mathbf{MI_{\Box}}$
  are given by \emph{implicative semilattices with operators} (ISLOs).
  These are pairs $(A, \Box)$ consisting of an implicative semilattice $A$ and a
  meet-preserving function $\Box : A \to A$.
  A homomorphism between ISLOs $(A, \Box)$ and $(A', \Box')$ is an implicative
  semilattice homomorphism $h : A \to A'$ that additionally satisfies
  $h \circ \Box = \Box' \circ h$.
  Let $\cat{ISL_{\Box}}$ denote the category of ISLOs and their morphisms.
  
  The main goal of this section is twofold: First, we give relational semantics
  for $\cat{MI_{\Box}}$, in the form of I-frames with an additional relation.
  We turn such frames into a category by defining an appropriate notion of
  morphism, which we show to be truth-preserving. Furthermore, we prove that
  these relational structures provide a sound semantics for $\mathbf{MI_{\Box}}$.
  Second, we show that both our category of relational frames, as well as 
  the category $\cat{ISL_{\Box}}$, are categories of \emph{dialgebras}.
  This realisation plays a key r\^{o}le in proving a duality result
  for $\cat{MI_{\Box}}$ in Section \ref{sec:duality}, which in turn
  underlies the completeness result for $\mathbf{MI_{\Box}}$ with respect to our
  frame semantics.
  
  Let us begin by defining $\Box$-frames.

\begin{defi}\label{def:box-frm}
  A $\Box$-frame is a tuple $(X, \leq, R)$ where $(X, \leq)$ is an I-frame
  and $R$ is a binary relation on $X$ satisfying:
  \begin{enumerate}[($B_1$)]
    \item \label{it:box-frm1}
          $\top R x$ iff $x = \top$, and $xR\top$ for all $x$;
    \item \label{it:box-frm2}
          If $xRy \leq z$ then $xRz$;
    \item \label{it:box-frm3}
          If $xRy$ and $x'Ry'$ then $(x \wedge x')R(y \wedge y')$;
    \item \label{it:box-frm4}
          If $(x \wedge x')Rz$ then there are $y, y' \in X$ such that
          $xRy$ and $x'Ry'$ and $y \wedge y' = z$.
  \end{enumerate}
  A $\Box$-model is a $\Box$-frame together with a valuation $V : \Prop \to \fun{F}A$
  that assigns a filter to each proposition letter.
  The interpretation of a formula $\phi$ in a $\Box$-model $\mo{M} = (X, \leq, R, V)$
  is given recursively by the clauses of Definition \ref{def:I-model} and
  $$
    \mo{M}, x \Vdash \Box\phi \iff xRy \text{ implies } \mo{M}, y \Vdash \phi.
  $$
\end{defi}

\begin{defi}
  A \emph{$\Box$-frame morphism} from $(X, \leq, R)$ to $(X', \leq', R')$
  is an I-frame morphism $h : (X, \leq) \to (X', \leq')$ between the
  underlying I-frames such that $h : (X, R) \to (X', R')$ is a bounded morphism,
  i.e.~$xRy$ implies $h(x)R'h(y)$ and for all $x \in X$ and $y' \in X'$, if $f(x)R'y'$ then there exists
  $y \in X$ such that $xRy$ and $f(y) = y'$.
  A \emph{$\Box$-model morphism} is a $\Box$-frame morphism that is also
  an I-model morphism between the underlying I-models.

  We denote the category of $\Box$-frames and $\Box$-frame morphisms
  by $\cat{IF_{\Box}}$.
\end{defi}

  An easy induction on the structure of a formula shows that:

\begin{prop}
  If $h : \mo{M} \to \mo{M}'$ is a $\Box$-model morphism,
  $x$ is a world in $\mo{M}$ and $\phi$ is an $\mathbf{MI_{\Box}}$-morphism,
  then
  $$
    \mo{M}, x \Vdash \phi \iff \mo{M}', h(x) \Vdash \phi.
  $$
\end{prop}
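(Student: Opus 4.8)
The plan is to prove the statement by induction on the structure of the formula $\phi \in \lan{L}_{\Box}$, exploiting that a $\Box$-model morphism is by definition both an I-model morphism between the underlying I-models and a bounded morphism for the accessibility relations $R, R'$. The base cases ($\top$ and proposition letters) and the cases for $\wedge$ and $\to$ are exactly those handled when proving truth-invariance for I-model morphisms; indeed, since a $\Box$-model morphism is in particular a bounded morphism between the underlying intuitionistic Kripke models, these cases follow from the fact that bounded morphisms preserve truth of intuitionistic formulae (as already noted in the text preceding the proposition for $\MI$ on I-frames). So the only genuinely new work is the modal case.

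For the modal case, suppose inductively that $\mo{M}, y \Vdash \psi \iff \mo{M}', h(y) \Vdash \psi$ for all worlds $y$ of $\mo{M}$. I would then argue both directions of $\mo{M}, x \Vdash \Box\psi \iff \mo{M}', h(x) \Vdash \Box\psi$ using the two halves of the bounded-morphism condition on $R$. For the forward direction, assume $\mo{M}, x \Vdash \Box\psi$ and let $y'$ be any $R'$-successor of $h(x)$; by the back condition there is $y$ with $xRy$ and $h(y) = y'$, whence $\mo{M}, y \Vdash \psi$ by assumption, so $\mo{M}', y' = \mo{M}', h(y) \Vdash \psi$ by the induction hypothesis. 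For the converse, assume $\mo{M}', h(x) \Vdash \Box\psi$ and let $y$ be any $R$-successor of $x$; since $xRy$ implies $h(x)R'h(y)$ we get $\mo{M}', h(y) \Vdash \psi$, and the induction hypothesis gives $\mo{M}, y \Vdash \psi$. Since $y$ was arbitrary, $\mo{M}, x \Vdash \Box\psi$.

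I do not expect any serious obstacle here: the proof is an entirely routine induction once the two directions of the bounded-morphism condition for $R$ are matched up with the universal quantifier in the semantic clause for $\Box$. The only point worth stating carefully is that the proposition letter case uses $V = h^{-1} \circ V'$ (part of being an I-model morphism), and that no use is made of conditions ($B_1$)--($B_4$) on $\Box$-frames — those conditions are needed for soundness of the axioms, not for truth-invariance under morphisms. Accordingly the write-up will simply say "by induction on $\phi$", dispatch the propositional cases by reference to the corresponding fact for I-model morphisms, and give the $\Box$-case in the two-direction form above.
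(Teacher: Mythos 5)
Your proof is correct and matches the paper's approach: the paper itself dispatches this result as ``an easy induction on the structure of a formula,'' and your induction --- propositional cases as for I-model morphisms, modal case via the two halves of the bounded-morphism condition on $R$ matched against the universal quantifier in the clause for $\Box$ --- is exactly that routine argument. One cosmetic caveat: since subformulas of an $\lan{L}_{\Box}$-formula may themselves contain $\Box$, the $\wedge$ and $\to$ cases should be phrased as inductive steps carried out by the same argument as in the I-model proposition rather than by citing that proposition verbatim, but this does not affect correctness.
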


\begin{prop}[Soundness]
  If $\mathbf{MI_{\Box}} \vdash \phi$ then $\cat{IF_{\Box}} \Vdash \phi$.
\end{prop}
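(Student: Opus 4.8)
The plan is to prove soundness by induction on the length of a derivation in $\mathbf{MI_{\Box}}$. Since $\mathbf{MI_{\Box}}$ extends $\mathbf{MI}$ with the two box axioms and the congruence rule, and since the underlying I-frame component already validates all of $\mathbf{MI}$ (every $\Box$-model is in particular an I-model, and $\mathbf{MI}$ is sound for I-models by Theorem~\ref{thm:MI-complete} and the observations preceding it), the only genuinely new work is (i) checking that $(\Box_1)$ and $(\Box_2)$ hold in every $\Box$-frame, and (ii) checking that the congruence rule preserves frame-validity. The modus ponens rule is handled as for $\mathbf{MI}$: if $x \Vdash \phi$ at every world and $x \Vdash \phi \to \psi$ at every world, then taking $y = x$ in the clause for $\to$ gives $x \Vdash \psi$ everywhere.

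For $(\Box_1)$, fix a $\Box$-model $\mo{M} = (X, \leq, R, V)$ and a world $x$; I must show $x \Vdash \Box(\phi \wedge \psi)$ iff $x \Vdash \Box\phi \wedge \Box\psi$. The left-to-right direction is immediate and uses nothing about $R$: if $xRy$ implies $y \Vdash \phi \wedge \psi$, then certainly $xRy$ implies $y \Vdash \phi$ and $y \Vdash \psi$. For right-to-left, suppose $x \Vdash \Box\phi$ and $x \Vdash \Box\psi$, and let $xRz$; I need $z \Vdash \phi \wedge \psi$. Here I would like to decompose $z$, but $(B_4)$ gives me a decomposition of successors of a \emph{meet}, not directly what I want — so instead I use $(B_3)$ with $x' = x$: we have $xRz$ and $xRz$, hence $(x\wedge x)R(z\wedge z)$, i.e. $xRz$ again, which gives nothing. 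The right move is: by $(B_1)$, $xR\top$, so $x \Vdash \Box\phi$ applied to the pair $xR\top$ gives $\top \Vdash \phi$ — also useless. Reconsidering: the correct argument is simply that $z \Vdash \phi$ (since $xRz$ and $x \Vdash \Box\phi$) and $z \Vdash \psi$ (since $xRz$ and $x \Vdash \Box\psi$), hence $z \Vdash \phi \wedge \psi$ by the conjunction clause. So $(\Box_1)$ in fact holds in any frame where $\Box$ is read via a single relation, with no frame conditions needed; the conditions $(B_1)$–$(B_4)$ are there to make the frames \emph{descriptive-dual} to ISLOs, not for raw soundness of $(\Box_1)$. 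For $(\Box_2)$: $x \Vdash \Box\top$ always, since $xRy$ implies $y \Vdash \top$ always, and $x \Vdash \top$ always, so the biconditional holds at every world.

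For the congruence rule, suppose $\phi \leftrightarrow \psi$ is frame-valid, i.e. for every $\Box$-model and every world $w$, $w \Vdash \phi$ iff $w \Vdash \psi$; I must show $\Box\phi \leftrightarrow \Box\psi$ is frame-valid. Fix $\mo{M}$ and $x$. Then $x \Vdash \Box\phi$ iff ($xRy$ implies $y \Vdash \phi$ for all $y$) iff ($xRy$ implies $y \Vdash \psi$ for all $y$, using the hypothesis at each $y$) iff $x \Vdash \Box\psi$; and since this holds at every $x$, the biconditional $\Box\phi \leftrightarrow \Box\psi$ holds at every world, i.e. $\cat{IF_{\Box}} \Vdash \Box\phi \leftrightarrow \Box\psi$. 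Assembling: each axiom of $\mathbf{MI}$ is frame-valid because I-frames validate $\mathbf{MI}$ and the truth clauses for $\top, \wedge, \to$ agree; the new axioms $(\Box_1), (\Box_2)$ are frame-valid by the paragraph above; and both rules (modus ponens, congruence) preserve frame-validity; so by induction every theorem of $\mathbf{MI_{\Box}}$ is frame-valid. I do not expect any real obstacle here — the proof is routine — the only subtlety is keeping straight that soundness of $(\Box_1)$ needs \emph{less} than the full package of frame conditions, which are motivated instead by the forthcoming duality.
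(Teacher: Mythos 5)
Your proof is correct and follows the same route as the paper: the paper's own proof consists of exactly the observation that the two box axioms hold in every $\Box$-frame "immediately from the definition", leaving the $\mathbf{MI}$ part and the rules implicit, which you spell out via induction on derivations.

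One caveat, since you explicitly claim that the conditions \ref{it:box-frm1}--\ref{it:box-frm4} play no role in soundness: that is true for the validity of $(\Box_1)$ and $(\Box_2)$ themselves, but not for the system as a whole. The Hilbert axioms of $\mathbf{MI}$ enter the derivation system as \emph{substitution instances over} $\lan{L}_{\Box}$, so to inherit their validity from the soundness of $\mathbf{MI}$ on I-frames you must be able to treat $\llb \Box\phi \rrb$ as a legitimate value of a proposition letter, i.e.\ you need that the denotation of every $\lan{L}_{\Box}$-formula is a filter. For $\Box\phi$ this is exactly where the frame conditions are used: persistence ($x \Vdash \Box\phi$ and $x \leq z$ imply $z \Vdash \Box\phi$) follows from $({\leq} \circ R) \subseteq R$, which Proposition~\ref{prop:rels} derives from \ref{it:box-frm1} and \ref{it:box-frm3}, and closure under meets follows from \ref{it:box-frm4} (e.g.\ the instance $\phi \to (\psi \to \phi)$ with $\phi = \Box\chi$ would fail without persistence of $\Box\chi$). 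So your inductive step for the $\mathbf{MI}$ axioms tacitly uses a lemma (``all denotations are filters'') that itself depends on \ref{it:box-frm1}--\ref{it:box-frm4}; adding that small induction closes the gap, and the concluding editorial remark should be weakened accordingly.
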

\begin{proof}
  We need to prove that every frame satisfies
  $\Box(\phi \wedge \psi) \leftrightarrow \Box\phi \wedge \Box\psi$ and
  $\Box\top \leftrightarrow \top$. Both follow immediately from the definition.
\end{proof}

  We can view both the category $\cat{IF_{\Box}}$ of $\Box$-frames as well as
  the category $\cat{ISL_{\Box}}$ of implicative semilattices with modal operators
  as categories of dialgebras. Dialgebras generalise both algebras and coalgebras,
  and were introduced by Hagino in~\cite{Hag87} to describe data types.
  They were subsequently investigated in~\cite{PolZwa01,AltEA11,Blo12,Cia13}.
  We recall their definition:

\begin{defi}
  Let $\fun{G}, \fun{H} : \cat{C} \to \cat{D}$ be two functors.
  A $(\fun{G}, \fun{H})$-dialgebra is a pair $(X, \gamma)$ consisting of
  an object $X \in \cat{C}$ and a morphism $\gamma : \fun{G}X \to \fun{H}X$ in $\cat{D}$.
  A dialgebra morphism from $(X, \gamma)$ to $(X', \gamma')$ is a morphism
  $f : X \to X'$ in $\cat{C}$ such that $\fun{H}f \circ \gamma = \gamma' \circ \fun{G}f$.
  The collection of $(\fun{G}, \fun{H})$-dialgebras and morphisms constitutes
  the category $\cat{Dialg}(\fun{G}, \fun{H})$. 
\end{defi}

  It is fairly straightforward to model ISLOs as dialgebras.
  Let $\fun{N} : \cat{ISL} \to \cat{SL}$ be the functor that sends an
  implicative meet-semilattice $A$ to the free meet-semilattice generated by
  $\{ \Box a \mid a \in A \}$ modulo $\Box\top = \top$ and 
  $\Box a \wedge \Box b = \Box(a \wedge b)$,
  and a homomorphism $h : A \to A'$ to $\fun{N}h : \fun{N}A \to \fun{N}A'$
  generated by $\fun{N}h(\Box a) = \Box h(a)$. Then we have:

\begin{prop}\label{prop:ISLb-dialg}
  Let $\fun{j} : \cat{ISL} \to \cat{SL}$ be the inclusion functor,
  then
  $$
    \cat{ISL_{\Box}} \cong \cat{Dialg}(\fun{N}, \fun{j}).
  $$
\end{prop}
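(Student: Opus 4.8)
The plan is to exhibit a concrete isomorphism of categories directly from the definitions. An object of $\cat{Dialg}(\fun{N}, \fun{j})$ is an implicative semilattice $A$ together with a semilattice morphism $\gamma : \fun{N}A \to \fun{j}A = A$. By the universal property of the free meet-semilattice $\fun{N}A$ (generated by the symbols $\Box a$ subject to $\Box\top = \top$ and $\Box a \wedge \Box b = \Box(a \wedge b)$), such a $\gamma$ corresponds bijectively to a function $f : A \to A$ satisfying exactly $f(\top) = \top$ and $f(a) \wedge f(b) = f(a \wedge b)$, i.e.\ to a meet-preserving map $A \to A$; the correspondence is $\gamma \mapsto (a \mapsto \gamma(\Box a))$ and $f \mapsto \fun{N}\text{-extension of } (\Box a \mapsto f(a))$. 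Thus an $(\fun{N}, \fun{j})$-dialgebra is precisely an ISLO $(A, \Box)$, and this assignment is a bijection on objects.

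First I would spell out this object-level correspondence, checking that ``$\gamma$ is a well-defined semilattice morphism'' unpacks to exactly the ISLO condition that $\Box$ preserves $\top$ and binary meets — which is immediate from the presentation of $\fun{N}A$. Second I would check the morphism condition matches: a dialgebra morphism $h : (A, \gamma) \to (A', \gamma')$ is an $\cat{ISL}$-morphism $h : A \to A'$ with $\fun{j}h \circ \gamma = \gamma' \circ \fun{N}h$. Since $\fun{j}h = h$ and $\fun{N}h(\Box a) = \Box h(a)$ by definition of $\fun{N}$ on morphisms, evaluating both sides at the generator $\Box a$ gives $h(\gamma(\Box a)) = \gamma'(\Box h(a))$, i.e.\ $h(\Box^A a) = \Box^{A'}(h(a))$; conversely if $h$ commutes with the box operators then the two semilattice morphisms $\fun{j}h \circ \gamma$ and $\gamma' \circ \fun{N}h$ agree on all generators of $\fun{N}A$, hence are equal. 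So $h$ is a dialgebra morphism iff it is an ISLO-homomorphism.

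Finally I would observe that this bijection on objects and hom-sets is functorial — identities and composition are computed in $\cat{ISL}$ on both sides, so nothing further needs checking — yielding the claimed isomorphism $\cat{ISL_{\Box}} \cong \cat{Dialg}(\fun{N}, \fun{j})$. The only point requiring any care is the first one: that the universal property of $\fun{N}A$ really does give a bijection between semilattice morphisms $\fun{N}A \to A$ and meet-preserving endomaps of $A$, which amounts to noting that the defining relations of $\fun{N}A$ are exactly ``$\Box$ preserves $\top$ and $\wedge$'' and nothing more. I do not expect a genuine obstacle here; the result is essentially a bookkeeping translation between the presentation of $\fun{N}$ and the axioms defining ISLOs, and the proof in the paper is presumably a couple of lines making exactly these identifications.
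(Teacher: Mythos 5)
Your proposal is correct and is precisely the ``obvious'' argument the paper has in mind (the paper explicitly leaves this proof to the reader): unpack the presentation of $\fun{N}A$ to identify semilattice morphisms $\fun{N}A \to \fun{j}A$ with meet-preserving maps $\Box : A \to A$, and check on generators that dialgebra morphisms are exactly ISLO-homomorphisms. Nothing further is needed.
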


  We leave the obvious proof of this proposition to the reader.
  Observe that $\fun{N}$ is naturally isomorphic to $\fun{j}$,
  so that $\cat{Dialg}(\fun{N}, \fun{j}) \cong \cat{Dialg}(\fun{j}, \fun{j})$.
  We shall exploit this fact in Section \ref{sec:duality}
  to obtain a duality between ISLOs and descriptive $\Box$-frames.

  Modelling $\Box$-frames as dialgebras requires a bit more work.
  Crucially, this uses the inclusion functor $\fun{i} : \cat{IF} \to \cat{MF}$
  and the \emph{covariant filter functor}
  $\fun{G} : \cat{IF} \to \cat{MF}$. This functor acts the same on objects as
  the functor $\fun{F}$, but differs in its action on morphisms.
  (Think of the co- and contravariant powerset functors in coalgebraic logic over
  a classical base, where the covariant powerset functor gives rise to
  Kripke frames~\cite{Mos99}, and the covariant powerset functor takes a set to its
  algebra of predicates.)
  We define this functor and prove that it is well defined.
  Subsequently, in Theorem \ref{thm:iG-dialg} we show that the category of $\Box$-frames
  is isomorphic to $\cat{Dialg}(\fun{i}, \fun{G})$.

\begin{defi}\label{def:fun-G}
  Let $(X, \leq)$ be an I-frame. Define $\fun{G}(X, \leq)$ to be the collection
  of filters ordered by reverse inclusion.
  This is a meet-semilattice with top element $\{ \top \}$ and meet defined by
  $$
    a \wwedge b := \langle a, b \rangle = \{ x \wedge y \mid x \in a, y \in b \}.
  $$
  For an $\cat{IF}$-morphism $h : (X, \leq) \to (X', \leq')$ define
  $$
    \fun{G}h : \fun{G}(X, \leq) \to \fun{G}(X', \leq')
             : a \mapsto h[a] = \{ h(x) \mid x \in a \}.
  $$
\end{defi}

  Note that $a \wwedge b$ was shown to be a filter in Section \ref{sec:sl}.
  In fact, it is the smallest filter containing both $a$ and $b$,
  so alternatively we can define
  $a \wwedge b = \bigcap \{ c \in \fun{F}(X, \leq) \mid a \cup b \subseteq c \}$.
  Thus $\fun{G}(X, \leq)$ has binary meets and a top element.
  Hence $\fun{G}$ is well defined on objects. We show that the same
  is true for morphisms.

\begin{lem}
  If $h$ is a $\cat{IF}$-morphism from $(X, \leq)$ to $(X', \leq')$,
  then $\fun{G}h$ is an $\cat{MF}$-morphism.
\end{lem}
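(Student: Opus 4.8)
The plan is to verify the two conditions that make $\fun{G}h$ an $\cat{MF}$-morphism: that it is well defined as a function $\fun{G}(X,\leq) \to \fun{G}(X',\leq')$, and that it preserves finite meets (both the top element $\{\top\}$ and the binary meet $\wwedge$). Recall that $\fun{G}h(a) = h[a]$, so the first task is to check that the direct image of a filter under an $\cat{IF}$-morphism is again a filter. Up-closure of $h[a]$ in $(X',\leq')$ is exactly where the boundedness of $h$ is needed: if $h(x) \leq' y'$ with $x \in a$, then since $h$ is an I-frame morphism there is $y \geq x$ with $h(y) = y'$, and $y \in a$ because $a$ is up-closed, so $y' = h(y) \in h[a]$. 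Closure under binary meets uses that $h$ preserves meets: if $h(x), h(x') \in h[a]$ with $x, x' \in a$, then $h(x) \wedge' h(x') = h(x \wedge x') \in h[a]$ since $x \wedge x' \in a$. And $\top' = h(\top) \in h[a]$ since $\top \in a$. So $h[a] \in \fun{F}(X',\leq') = \fun{G}(X',\leq')$.

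Next I would check preservation of the semilattice structure. For the top element: $\fun{G}h(\{\top\}) = h[\{\top\}] = \{h(\top)\} = \{\top'\}$, which is the top of $\fun{G}(X',\leq')$. For binary meets, I want $\fun{G}h(a \wwedge b) = \fun{G}h(a) \wwedge \fun{G}h(b)$, i.e. $h[\langle a, b\rangle] = \langle h[a], h[b]\rangle$. The inclusion $\subseteq$ is immediate: a generic element of $\langle a, b\rangle$ has the form $x \wedge y$ with $x \in a$, $y \in b$, and $h(x \wedge y) = h(x) \wedge' h(y) \in \langle h[a], h[b]\rangle$. For $\supseteq$, a generic element of $\langle h[a], h[b]\rangle$ is $h(x) \wedge' h(y) = h(x \wedge y)$ with $x \in a$, $y \in b$, which lies in $h[\langle a, b\rangle]$. (Here one should recall from Section \ref{sec:sl} that $\langle p, q\rangle = \{u \wedge v \mid u \in p, v \in q\}$ is already closed, so no extra up-closure is hidden in the notation; if one prefers the characterisation $a \wwedge b = \bigcap\{c \mid a \cup b \subseteq c\}$, the same argument works by showing both sides are the least filter containing $h[a] \cup h[b]$.)

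The main obstacle — really the only non-routine point — is the well-definedness of $\fun{G}h$ on objects, specifically the up-closure of $h[a]$, since that is precisely the step that fails for a plain $\cat{MF}$-morphism and requires the boundedness clause of an I-frame morphism. Everything else is a direct unwinding of definitions using that $h$ preserves meets and top. I would therefore organise the write-up as: first the claim that $h[a]$ is a filter (with the up-closure argument highlighted as the place boundedness enters), then the one-line checks that $\fun{G}h$ preserves $\{\top\}$ and $\wwedge$.
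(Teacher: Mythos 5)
Your proof is correct and follows essentially the same route as the paper: show $h[a]$ is a filter (with boundedness of $h$ supplying up-closure, which you spell out in more detail than the paper), then verify preservation of $\{\top\}$ and of $\wwedge$ via the same elementwise computation $h[\langle a,b\rangle]=\langle h[a],h[b]\rangle$. Your parenthetical remark that $\langle p,q\rangle$ needs no extra up-closure is justified by the paper's earlier observation for distributive (hence implicative) semilattices, so nothing is missing.
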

\begin{proof}
  Let $a \in \fun{G}(X, \leq)$.
  Since $h$ is bounded the set $h[a]$ is up-closed in $(X', \leq')$.
  Moreover, it contains $\top'$ and is closed under $\wedge'$
  because $h$ preserves $\top$ and meets.
  One easily sees that $\fun{G}h(\{ \top \}) = \{ \top' \}$.
  Furthermore, for $a, b \in \fun{G}(X, \leq)$ we have
  \begin{align*}
    \fun{G}h(a) \wwedge' \fun{G}h(b)
      &= \{ x' \wedge' y' \mid x' \in h[a] \text{ and } y' \in h[b] \} \\
      &= \{ h(x) \wedge' h(y) \mid x \in a \text{ and } y \in b \} \\
      &= \{ h(x \wedge y) \mid x \in a \text{ and } y \in b \} \\
      &= \{ h(z) \mid z \in a \wwedge b \} \\
      &= \fun{G}h(a \wwedge b).
  \end{align*}
  So $\fun{G}h$ preserves the top element and binary meets, hence all finite meets.
\end{proof}

  A straightforward verification shows that $\fun{G}$ is indeed
  a functor. We proceed to the main theorem of this section.

\begin{thm}\label{thm:iG-dialg}
  Let $\fun{i} : \cat{IF} \to \cat{MF}$ be the inclusion functor.
  Then $\cat{IF_{\Box}} \cong \cat{Dialg}(\fun{i}, \fun{G})$.
\end{thm}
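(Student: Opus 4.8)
The plan is to build an isomorphism $\cat{IF_{\Box}} \cong \cat{Dialg}(\fun{i}, \fun{G})$ that is the identity on the underlying I-frame and that trades a binary relation $R \subseteq X \times X$ for a map $\gamma : \fun{i}X \to \fun{G}X$ via $\gamma(x) = R[x] := \{ y \in X \mid xRy \}$. Given a $\Box$-frame $(X, \leq, R)$, I would first check that each $R[x]$ is a filter of $(X, \leq)$: it is up-closed by $(B_2)$, closed under binary meets by $(B_3)$ with $x' = x$, and contains $\top$ since $xR\top$ by $(B_1)$; hence it is a nonempty, up-closed, meet-closed set. So $\gamma : x \mapsto R[x]$ is a well-defined function $X \to \fun{G}X$. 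To see it is an $\cat{MF}$-morphism it suffices to check that it preserves $\top$ and binary meets (monotonicity and preservation of all finite meets then follow automatically). The equation $\gamma(\top) = \{ \top \}$ is exactly the first clause of $(B_1)$, and $\gamma(x \wedge x') = \gamma(x) \wwedge \gamma(x')$ unwinds, using $a \wwedge b = \langle a, b \rangle = \{ u \wedge v \mid u \in a,\ v \in b \}$, to the identity $R[x \wedge x'] = \{ y \wedge y' \mid y \in R[x],\ y' \in R[x'] \}$, whose $\supseteq$-inclusion is precisely $(B_3)$ and whose $\subseteq$-inclusion is precisely $(B_4)$. Thus $(X, \gamma)$ is an $(\fun{i}, \fun{G})$-dialgebra.

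Conversely, given an $(\fun{i}, \fun{G})$-dialgebra $(X, \gamma)$ — so $X$ is an I-frame and $\gamma$ preserves $\top$ and finite meets, with every $\gamma(x)$ a filter — I would set $xRy \iff y \in \gamma(x)$ and read off $(B_1)$--$(B_4)$ by reversing the computation above: $(B_1)$ from $\gamma(\top) = \{\top\}$ together with $\top \in \gamma(x)$; $(B_2)$ from up-closedness of $\gamma(x)$; and $(B_3)$, $(B_4)$ from the two inclusions of $\gamma(x \wedge x') = \langle \gamma(x), \gamma(x') \rangle$. These two assignments are patently mutually inverse: starting from $R$ we recover $xR'y \iff y \in R[x] \iff xRy$, and starting from $\gamma$ we recover $\gamma'(x) = \{ y \mid y \in \gamma(x) \} = \gamma(x)$. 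Hence the object classes are in bijection, identically on the I-frame component.

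For morphisms, observe that for an $\cat{IF}$-morphism $h : X \to X'$ and the associated structures $\gamma, \gamma'$ one computes $(\fun{G}h \circ \gamma)(x) = h[R[x]] = \{ h(y) \mid xRy \}$ and $(\gamma' \circ \fun{i}h)(x) = R'[h(x)] = \{ y' \mid h(x)R'y' \}$. Therefore the dialgebra-morphism law $\fun{G}h \circ \gamma = \gamma' \circ \fun{i}h$ holds if and only if $h$ is a bounded morphism from $(X, R)$ to $(X', R')$: the $\subseteq$-inclusion is the forth-condition $xRy \Rightarrow h(x)R'h(y)$, and the $\supseteq$-inclusion is the back-condition. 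Since a $\Box$-frame morphism is by definition an $\cat{IF}$-morphism that is bounded for $R$ and $R'$, the two notions of morphism coincide, and identities and composition are evidently preserved on both sides; this yields the claimed isomorphism of categories. I do not expect a genuine obstacle here — the only step requiring real attention is the double inclusion $R[x \wedge x'] = \{ y \wedge y' \mid y \in R[x],\ y' \in R[x'] \}$, and the axioms $(B_3)$ and $(B_4)$ are precisely what is needed to supply its two directions.
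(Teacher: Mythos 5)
Your proposal is correct and follows essentially the same route as the paper's proof: the same object correspondence $\gamma(x)=R[x]$ (with $(B_1)$ giving the top, $(B_3)$/$(B_4)$ giving the two inclusions of meet-preservation, and the reverse reading for the converse), and the same identification of the dialgebra square with the forth- and back-conditions for boundedness on morphisms.
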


\begin{proof}
  We split the proof into three claims. The first two describe the isomorphism
  on objects, and the last one proves the isomorphism for morphisms.

\begin{clm}\label{clm:iG-dialg1}
  Let $(X, \leq, R)$ be a $\Box$-frame and define
  $\gamma_R : \fun{i}(X, \leq) \to \fun{G}(X, \leq) : x \mapsto R[x]$,
  where $R[x] := \{ y \in X \mid xRy \}$.
  Then $(X, \leq, \gamma)$ is a $(\fun{i}, \fun{G})$-dialgebra.
\end{clm}
\begin{subproof}[Proof of claim]
  For all $x \in X$ the set $\gamma_R(x)$ is a filter by definition of $R$.
  We verify that $\gamma_R$ preserves meets and top.
  By \ref{it:box-frm1} we have $\gamma_R(\top) = \{ \top \}$, which is the top element in
  $\fun{G}(X, \leq)$.
  It follows immediately from \ref{it:box-frm3} and \ref{it:box-frm4} that
  $\gamma_R(x) \wedge_{\fun{G}} \gamma_R(y) = \gamma_R(x \wedge y)$.
\end{subproof}

\begin{clm}\label{clm:iG-dialg2}
  Let $(X, \leq)$ be an I-frame and $(X, \leq, \gamma)$ a
  $(\fun{i}, \fun{G})$-dialgebra. Define $R_{\gamma}$ by
  $xR_{\gamma}y$ iff $y \in \gamma(x)$. Then $(X, \leq, R)$ is a $\Box$-frame.
\end{clm}
\begin{subproof}[Proof of claim]
  We need to verify that $R_{\gamma}$ satisfies the four items
  from Definition \ref{def:box-frm}.
  Since $\gamma(\top) = \{ \top \}$ \ref{it:box-frm1} holds.
  By definition $\gamma(x)$ is a filter in $(X, \leq)$, hence up-closed.
  This proves \ref{it:box-frm2}.
  Items \ref{it:box-frm3} and \ref{it:box-frm4} follow from the fact
  that $\gamma$ preserves meets.
\end{subproof}

  It is straightforward to see that Claims \ref{clm:iG-dialg1} and \ref{clm:iG-dialg2}
  establish an isomorphism on objects.
  We complete the proof of the theorem by showing that the $\Box$-frame morphisms between two $\Box$-frames
  are precisely the $(\fun{i}, \fun{G})$-dialgebra morphisms between the corresponding
  dialgebras.
  Let $(X, \leq, R)$ and $(X', \leq', R')$ be two $\Box$-frames
  with corresponding $(\fun{i}, \fun{G})$-dialgebras $(X, \leq, \gamma)$ and
  $(X', \leq', \gamma')$.

\begin{clm}
  An I-frame morphism $f : (X, \leq) \to (X', \leq')$ is a
  $\Box$-frame morphism from $(X, \leq, R)$ to $(X', \leq', R')$
  if and only if it is an $(\fun{i}, \fun{G})$-dialgebra morphism
  from $(X, \leq, \gamma)$ to $(X', \leq', \gamma')$.
\end{clm}
\begin{subproof}[Proof of claim]
  Suppose $f$ is a $\Box$-frame morphism. In order to prove that it is also
  a dialgebra morphism we need to show that
  $$
    \begin{tikzcd}
      \fun{i}(X, \leq)
            \arrow[r, "\fun{i}f"]
            \arrow[d, "\gamma"]
        & \fun{i}(X', \leq')
            \arrow[d, "\gamma'"] \\
      \fun{G}(X, \leq)
            \arrow[r, "\fun{G}f"]
        & \fun{G}(X', \leq')
    \end{tikzcd}
  $$
  commutes. To see this, let $x \in X$ and $y' \in Y'$. Then
  \begin{align*}
    y' \in \gamma'(\fun{i}f(x))
      &\iff f(x)R'y' \\
      &\iff \exists y \in X \text{ s.t. } xRy \text{ and } f(y) = y' \\
      &\iff \exists y \in X \text{ s.t. } y \in \gamma(x) \text{ and } f(y) = y' \\
      &\iff y' \in \fun{G}f(\gamma(x)).
  \end{align*}
  Conversely, suppose $f$ is a dialgebra morphism. Then $f$ is monotone with respect
  to $R$ and $R'$ because $xRy$ implies $y \in \gamma(x)$ hence
  $f(y) \in \fun{G}f(\gamma(x)) = \gamma'(\fun{i}f(x))$ so $f(x)R'f(y)$.
  For boundedness, suppose $f(x)R'y'$, then $y' \in \gamma'(\fun{i}f(x)) = \fun{G}f(\gamma(x))$,
  and this implies $f(y) = y'$ for some $y \in \gamma(x)$, i.e.~for some $y$ with $xRy$.
\end{subproof}
  
  Combining these three claims proves the theorem.
\end{proof}

\section{Descriptive \texorpdfstring{$\Box$}{box}-frames and duality}\label{sec:duality}

  Towards a duality for ISLOs
  we now define \emph{descriptive $\Box$-frames}.
  We then show that these are dialgebras for the functors
  $\fun{I}, \fun{V} : \cat{ISpace} \to \cat{MSpace}$, where $\fun{I}$ is the inclusion
  functor and $\fun{V}$ is a variation of the Vietoris functor.
  Subsequently, we prove that $\fun{V}$ is naturally isomorphic to $\fun{I}$,
  and as a consequence we obtain a duality between descriptive $\Box$-frames
  and implicative meet-semilattices with normal operators as follows:
  $$
    \cat{Dialg}(\fun{I}, \fun{V})
      \cong \cat{Dialg}(\fun{I}, \fun{I})
      \equiv^{\op} \cat{Dialg}(\fun{j}, \fun{j})
      \cong \cat{Dialg}(\fun{N}, \fun{j}).
  $$
  As a corollary we get completeness of $\mathbf{MI_{\Box}}$ with respect to
  $\Box$-frames.
  
\begin{defi}
  A \emph{general $\Box$-frame} is a tuple $(X, \leq, R, A)$ such that
  $(X, \leq, R)$ is a $\Box$-frame, $(X, \leq, A)$ is a descriptive I-frame,
  and $A$ is closed under
  $$
    m_{\Box} : \fun{F}(X, \leq) \to \fun{F}(X, \leq)
                    : p \mapsto \{ x \in X \mid R[x] \subseteq p \}.
  $$
  (Recall that we write $R[x] := \{ y \in X \mid xRy \}$.)
  It is called \emph{descriptive} if moreover
  $R[x] = \bigcap \{ a \in A \mid R[x] \subseteq a \}$ for all $x \in X$. 
  
  A morphism between general $\Box$-frames is a function which is
  simultaneously a $\Box$-frame morphism between the underlying $\Box$-frames,
  and a descriptive I-frame morphism between the underlying descriptive I-frames.
  The collection of descriptive $\Box$-frames and general $\Box$-frame morphisms
  forms a category, and since we tend to think of these as I-spaces
  with a collection of admissibles we will denote it by $\cat{ISpace_{\Box}}$.
\end{defi}

  As stated the category $\cat{ISpace_{\Box}}$ can be described as a category of
  dialgebras. We first define the relevant functor for doing so.

\begin{defi}\label{def:fun-V}
  Let $\topo{X} = (X, \leq, \tau)$ be an I-space. Call a filter
  $c \in \fun{F}(X, \leq)$ \emph{closed} if it is the intersection of all
  clopen filters in which it is contained. 
  (By Lemma~\ref{lem:closed-clp-fil} it is equivalent to require that
  $c$ is closed in $(X, \tau)$.)
  Define $\fun{V}\topo{X}$ to be the collection of 
  closed filters of $\topo{X}$ ordered by reverse inclusion and
  topologised by the subbase
  $$
    \dbox a = \{ c \in \fun{V}\topo{X} \mid c \subseteq a \},
    \qquad
    \ddiamond b = \{ c \in \fun{V}\topo{X} \mid c \cap b \neq \emptyset \},
  $$
  where $a$ ranges over the clopen filters of $\topo{X}$ and 
  $b$ over the clopen prime downsets of $\topo{X}$.
  For an I-space morphism $h : \topo{X} \to \topo{X}'$
  define $\fun{V}h = h[-]$, i.e.~$\fun{V}h$ sends a closed filter $p$
  to the direct image $h[p]$ of $p$ under $h$.
\end{defi}

  Note that the top element of $\fun{V}\topo{X}$ is $\{ \top \}$
  and conjunction of closed filters is given by
  $$
    c \wwedge c' = \bigcap \{ a \in A \mid c \cup c' \subseteq a \}.
  $$
  Before proving that descriptive $\Box$-frames are $(\fun{I}, \fun{V})$-dialgebras,
  we will prove that $\fun{V}$ is naturally isomorphic to $\fun{I}$.
  This entails the useful fact that $c \wwedge c' = \{ y \wedge z \mid y \in c, z \in c' \}$,
  which we can then use to prove $\cat{ISpace_{\Box}} \cong \cat{Dialg}(\fun{I}, \fun{V})$.

\begin{lem}\label{lem:V-iso}
  For every I-space $\topo{X}$ the M-space $\fun{V}\topo{X}$ is isomorphic
  to $\topo{X}$ in $\cat{MSpace}$. 
\end{lem}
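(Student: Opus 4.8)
The plan is to exhibit an explicit isomorphism $\topo{X} \to \fun{V}\topo{X}$ in $\cat{MSpace}$, namely the map $\xi$ sending a point $x \in X$ to the principal filter ${\uparrow}x = \{ y \in X \mid x \leq y \}$. First I would check that this lands in $\fun{V}\topo{X}$: a principal filter ${\uparrow}x$ is closed because, in an M-space, every closed filter is the intersection of the clopen filters containing it (Lemma~\ref{lem:closed-clp-fil}), and by differentiatedness of the I-space the set of clopen filters containing $x$ has intersection exactly ${\uparrow}x$. Next I would verify that $\xi$ is a semilattice homomorphism: it preserves $\top$ since ${\uparrow}\top = \{\top\}$, which is the top of $\fun{V}\topo{X}$, and it preserves binary meets since ${\uparrow}(x \wedge y) = {\uparrow}x \wwedge {\uparrow}y$ — here I would use that $\fun{V}\topo{X}$'s meet is the smallest closed filter containing the union, and that $z \geq x \wedge y$ iff $z = a \wedge b$ for some $a \geq x$, $b \geq y$ (take $a = x \wedge z$, $b = y \wedge z$, or invoke distributivity of the implicative semilattice underlying $\topo{X}$).

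Then I would show $\xi$ is a bijection. Injectivity follows from differentiatedness again: if ${\uparrow}x = {\uparrow}y$ then $x \leq y$ and $y \leq x$, so $x = y$. For surjectivity, I need that every closed filter of $\topo{X}$ is principal; this is where I expect the main obstacle to lie. The argument: let $c$ be a closed filter, so $c = \bigcap\{ a \in \CFil\topo{X} \mid c \subseteq a \}$. By compactness of the I-space together with the fact that $\CFil\topo{X}$ is closed under meets, one shows the filter $c$ has a least element — essentially, if $c$ had no least element, then $\{ \tilde a^{\,c} \mid a \in \CFil\topo{X},\ a \notin c\text{-above}\}$ together with... more carefully, one uses that the complements of the clopen filters not containing a would-be minimum, restricted to $c$, form a cover with no finite subcover, contradicting compactness. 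Concretely: suppose $c$ is closed but not principal; for each $x \in c$ there is $y \in c$ with $x \not\leq y$, hence (differentiatedness) a clopen filter separating them, and one assembles these into an open cover of the closed — hence compact — set $c$ that admits no finite subcover, a contradiction. So $c = {\uparrow}x$ for $x = \min c$, giving surjectivity.

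Finally I would check that $\xi$ and $\xi^{-1}$ are continuous, i.e.\ $\xi$ is a homeomorphism. It suffices to check that preimages of subbasic opens of $\fun{V}\topo{X}$ are open in $\topo{X}$ and conversely. For a clopen filter $a$ of $\topo{X}$, $\xi^{-1}(\dbox a) = \{ x \mid {\uparrow}x \subseteq a \} = \{ x \mid x \in a \} = a$, which is clopen; and for a clopen prime downset $b$, $\xi^{-1}(\ddiamond b) = \{ x \mid {\uparrow}x \cap b \neq \emptyset \} = \{ x \mid x \in b \} = b$ (using that $b$ is down-closed, so ${\uparrow}x$ meets $b$ iff $x \in b$), again clopen. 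Conversely, the subbasic opens $\tilde a, \tilde a^{\,c}$ of $\topo{X}$ are $\xi^{-1}(\dbox a)$ and $\xi^{-1}(\ddiamond{a^c})$ respectively, so $\xi^{-1}$ is continuous too. Hence $\xi$ is the desired isomorphism in $\cat{MSpace}$, and by construction it is natural in $\topo{X}$, which is what the subsequent argument will need.
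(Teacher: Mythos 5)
Your proof is correct, but it goes a genuinely different way than the paper's. The paper never works with points of $\topo{X}$ directly: it shows $\fun{V}\topo{X} \cong \TFil(\CFil\topo{X})$ via the pair of maps $c \mapsto \{a \in \CFil\topo{X} \mid c \subseteq a\}$ and $p \mapsto \bigcap p$, and then lets the already-proved duality $\topo{X} \cong \TFil(\CFil\topo{X})$ (Theorem~\ref{thm:msl-duality}) do the rest; the fact you make central --- that every closed filter is principal, i.e.\ of the form ${\uparrow}x$ --- is extracted in the paper only \emph{afterwards}, as a corollary of the lemma. You instead build the isomorphism $x \mapsto {\uparrow}x$ directly, and the heart of your argument is the surjectivity step, proved by a compactness argument on the closed (hence compact) set $c$: your ``concrete'' cover argument does go through, since a finite subcover $a_{x_1},\dots,a_{x_n}$ is refuted by $y_{x_1}\wedge\cdots\wedge y_{x_n}$, which lies in $c$ but, by upward closure of each $a_{x_i}$, in none of them --- exactly the trick already used inside Lemma~\ref{lem:closed-clp-fil}. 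What each approach buys: yours is more elementary and self-contained at the level of points (modulo the separation property ``$x \not\leq y$ implies some clopen filter contains $x$ but not $y$'', which you should justify either via Proposition~\ref{prop:DIF-IS} or directly from the duality), whereas the paper's route avoids any fresh compactness argument and hands over, essentially for free, the form of the isomorphism needed for the naturality statement of Lemma~\ref{lem:V-I} ($\fun{V} \cong \TFil\cdot\CFil$). Two small repairs to your write-up: the explicit witnesses $a = x \wedge z$, $b = y \wedge z$ do not satisfy $a \geq x$, $b \geq y$ (their meet is $x \wedge y$, not $z$); either invoke distributivity as you suggest, or note that the decomposition is not needed at all, since ${\uparrow}(x \wedge y)$ is plainly the least (closed) filter containing ${\uparrow}x \cup {\uparrow}y$, which already gives ${\uparrow}(x\wedge y) = {\uparrow}x \wwedge {\uparrow}y$. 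Also, your first, vaguer formulation of the surjectivity argument should be dropped in favour of the ``concretely'' version, and the closing naturality claim is true (it needs boundedness of I-space morphisms to get $h[{\uparrow}x] = {\uparrow}h(x)$) but is not part of this lemma.
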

\begin{proof}
  Since $\TFil$ and $\CFil$ constitute a duality, it suffices
  to prove that $\fun{V}\topo{X}$ is isomorphic to $\TFil(\CFil\topo{X})$.
  To achieve this, we define two continuous semilattice homomorphisms
  and show that these are each other's inverses.
  
  Define $f : \fun{V}\topo{X} \to \TFil (\CFil\topo{X})$ by
  $f(c) = \{ a \in \CFil(\topo{X}) \mid c \subseteq a \}$. It is easy to verify that
  this is a filter. Clearly $f(\{ \top \}) = \CFil(\topo{X})$, which is the
  top element of $\TFil(\CFil\topo{X})$, and
  \begin{align*}
    f(c \wwedge c')
      &= f\Big(\bigcap \big\{ a \in \CFil(\topo{X}) \mid c \cup c' \subseteq a \big\}\Big) \\
      &= f\Big(\bigcap \big\{ a \in \CFil(\topo{X}) \mid c \subseteq a \text{ and } c' \subseteq a \big\}\Big)
       = f(c) \cap f(c')
  \end{align*}
  so $f$ preserves meets. Furthermore,
  $$
    f^{-1}(\tilde{a})
      = \{ c \in \fun{V}\topo{X} \mid f(c) \in \tilde{a} \}
      = \{ c \in \fun{V}\topo{X} \mid a \in f(c) \}
      = \{ c \in \fun{V}\topo{X} \mid c \subseteq a \}
      = \dbox a
  $$
  so $f$ is continuous.
  
  In the converse direction, define
  $g : \TFil (\CFil\topo{X}) \to \fun{V}\topo{X}
     : p \mapsto \bigcap \{ a \in \CFil\topo{X} \mid a \in p \}$.
  This is well defined because $g(p)$ is the intersection of filters in $A$.
  Furthermore, $g(p) \subseteq a$ iff $a \in p$.
  (Proof: if $g(p) \subseteq a$ then there are (potentially infinite) $b_i \in A$
  such that $b_i \in p$ and $\bigcap b_i \subseteq a$. By compactness
  we can find a finite number of such $b_i$, say, $b_1, \ldots, b_m$,
  such that $b_1 \cap \cdots \cap b_m \subseteq a$. But then
  $b_1 \cap \cdots \cap b_m \in p$ because $p$ is a filter,
  hence $a \in p$ as $p$ is up-closed.)
  Moreover, $g(A) = \{ \top \}$ and
  \begin{align*}
    g(p \cap q)
      &= \bigcap \{ a \in A \mid a \in p \text{ and } a \in q \} \\
      &= \bigcap \{ a \in A \mid g(p) \subseteq a \text{ and } g(q) \subseteq a \} \\
      &= \bigcap \{ a \in A \mid g(p) \cup g(q) \subseteq a \} \\
      &= g(p) \wwedge g(q)
  \end{align*}
  so $g$ preserves top and binary meets, hence all finite meets.
  Finally, 
  $$
    g^{-1}(\dbox a) = \{ p \mid g(p) \subseteq a \} = \{ p \mid a \in p \} = \tilde{a}.
  $$
  proves continuity. Clearly $f$ and $g$ are inverses, and hence
  $\fun{V}\topo{X} \cong \TFil(\CFil\topo{X})$.
\end{proof}

  Since we know that $\topo{X}$ is isomorphic to $\TFil(\CFil\topo{X})$
  via the map $x \mapsto \tilde{x} = \{ a \in A \mid x \in a \}$,
  it follows that every $c \in \fun{V}\topo{X}$ is of the form ${\uparrow}x$
  for some $x \in \topo{X}$.
  As a consequence, the meet of $c$ and $c'$ in $\fun{V}\topo{X}$
  is
  $$
    c \wwedge c' = \{ y \wedge z \mid y \in c, z \in c' \}.
  $$
  To see this, write $c = {\uparrow}x_c$ and $c' = {\uparrow}x_{c'}$.
  Then $c \wwedge c'$ in $\fun{V}\topo{X}$ corresponds to
  $\tilde{x}_c \cap \tilde{x}_{c'} = \widetilde{x_c \wedge x_{c'}}$
  in $\TFil(\CFil\topo{X})$, so $c \wwedge c' = {\uparrow}(x_c \wedge x_{c'})$.
    
\begin{lem}\label{lem:V-I}
  The functors $\fun{V}$ and $\TFil \cdot \CFil$ are naturally isomorphic.
\end{lem}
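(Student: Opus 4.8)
The plan is to upgrade the object-level isomorphism from Lemma~\ref{lem:V-iso} to a natural isomorphism $\fun{V} \cong \TFil \cdot \CFil$ of functors $\cat{ISpace} \to \cat{MSpace}$. Writing $\epsilon_{\topo{X}} : \fun{V}\topo{X} \to \TFil(\CFil\topo{X})$ for the map $f$ constructed in the proof of Lemma~\ref{lem:V-iso} (namely $c \mapsto \{ a \in \CFil\topo{X} \mid c \subseteq a \}$), I would first record that each $\epsilon_{\topo{X}}$ is an $\cat{MSpace}$-isomorphism — this is exactly Lemma~\ref{lem:V-iso}. It then remains to check the naturality square: for every I-space morphism $h : \topo{X} \to \topo{X}'$, the diagram with top edge $\epsilon_{\topo{X}}$, bottom edge $\epsilon_{\topo{X}'}$, left edge $\fun{V}h$ and right edge $\TFil(\CFil h) = (h^{-1})^{-1}$ commutes. (Recall $\CFil h = h^{-1}$ and $\TFil$ of a morphism is again preimage, so the right-hand vertical map sends a filter $p$ of $\CFil\topo{X}$ to $\{ a' \in \CFil\topo{X}' \mid h^{-1}(a') \in p \}$; and $\fun{V}h = h[-]$ sends a closed filter to its direct image.)

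The computation I would carry out is a straight chase: fix a closed filter $c \in \fun{V}\topo{X}$ and a clopen filter $a' \in \CFil\topo{X}'$, and verify
\[
  a' \in \epsilon_{\topo{X}'}(\fun{V}h(c)) \iff a' \in \TFil(\CFil h)(\epsilon_{\topo{X}}(c)).
\]
The left side unfolds to $h[c] \subseteq a'$, and the right side unfolds to $h^{-1}(a') \in \epsilon_{\topo{X}}(c)$, i.e.\ $c \subseteq h^{-1}(a')$; and $h[c] \subseteq a'$ iff $c \subseteq h^{-1}(a')$ is immediate. This shows the two filters agree, hence the square commutes pointwise, hence it commutes. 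Combined with the fact that each component is an isomorphism, this yields the asserted natural isomorphism. I would also note in passing that one could alternatively phrase this as: $\epsilon$ is the composite of the unit $\eta$ (restricted appropriately) with the identification of $\fun{V}\topo{X}$ with $\topo{X}$ discussed after Lemma~\ref{lem:V-iso}, but the direct chase above is cleaner and self-contained.

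The only genuinely delicate point — and the thing I'd flag as the main obstacle — is checking that $\fun{V}$ really is a functor into $\cat{MSpace}$ in the first place, i.e.\ that for an I-space morphism $h$ the direct image $h[c]$ of a \emph{closed} filter $c$ is again a closed filter and that $\fun{V}h$ is a continuous semilattice morphism; this legitimises even writing the square down. Given the description of closed filters as $\uparrow x$ established after Lemma~\ref{lem:V-iso} (every $c \in \fun{V}\topo{X}$ is of the form $\uparrow x$), one has $h[\uparrow x] = \uparrow h(x)$ by boundedness of $h$, so $\fun{V}h$ is well defined and corresponds under the isomorphisms to the original $h$; continuity and meet-preservation then transfer along $\epsilon$. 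With that in hand the naturality verification is the routine preimage/direct-image adjunction displayed above, so the bulk of the work is bookkeeping rather than mathematics.
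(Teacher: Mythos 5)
Your proposal is correct and takes essentially the same route as the paper: the components are exactly the maps $f_{\topo{X}}$ from Lemma~\ref{lem:V-iso}, and naturality is verified by the same chase $a' \in f_{\topo{X}'}(\fun{V}h(c)) \iff h[c] \subseteq a' \iff c \subseteq h^{-1}(a') \iff a' \in \TFil(\CFil h)(f_{\topo{X}}(c))$. Your extra remark on why $\fun{V}h$ sends closed filters to closed filters (via $h[{\uparrow}x] = {\uparrow}h(x)$ by boundedness) is a sound addition that the paper leaves implicit in Definition~\ref{def:fun-V}.
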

\begin{proof}
  Define the transformation
  $(f_{\topo{X}})_{\topo{X} \in \cat{ISpace}} : \fun{V} \to \TFil \cdot \CFil$
  on objects as in Lemma \ref{lem:V-iso}. We show that it is natural.
  Since by Lemma \ref{lem:V-iso} it is isomorphic on objects, this proves
  the claimed natural isomorphism.

  Suppose $h : \topo{X} \to \topo{X}'$ is an I-space morphism.
  We need to show that
  $$
    \begin{tikzcd}
      \fun{V}\topo{X} 
            \arrow[r, "f_{\topo{X}}"]
            \arrow[d, "\fun{V}h" swap]
        & \TFil(\CFil\topo{X})
            \arrow[d, "\TFil(\CFil h)"]\\
      \fun{V}\topo{X} 
            \arrow[r, "f_{\topo{X}'}"]
        & \TFil(\CFil\topo{X})
    \end{tikzcd}
  $$
  commutes. To this end, let $c \in \fun{V}\topo{X}$ and
  $a' \in \CFil\topo{X}'$, and compute
  \begin{align*}
    a' \in f_{\topo{X}'}(\fun{V}h(c))
      \iff h[c] \subseteq a' 
      &\iff c \subseteq h^{-1}(a') \\
      &\iff h^{-1}(a') \in f_{\topo{X}}(c) 
      \iff a' \in \TFil(\CFil h)(f_{\topo{X}}(c)).
  \end{align*}
  This proves the proposition.
\end{proof}

  We now prove that descriptive $\Box$-frames are indeed $(\fun{I}, \fun{V})$-dialgebras.

\begin{thm}\label{thm:ISpace-Dialg}
  We have $\cat{ISpace_{\Box}} \cong \cat{Dialg}(\fun{I}, \fun{V})$.
\end{thm}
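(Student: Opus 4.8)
The plan is to mirror the proof of Theorem~\ref{thm:iG-dialg}, splitting the argument into an object-level correspondence and a morphism-level correspondence, but now carrying along the topological data. The one genuinely new ingredient over the frame case is that the dialgebra structure map must be a morphism in $\cat{MSpace}$, i.e.\ a \emph{continuous} semilattice homomorphism; I will show that continuity corresponds precisely to closure of the admissible collection $A$ under $m_{\Box}$, while the requirement that the structure map takes values in $\fun{V}\topo{X}$ (closed filters) corresponds to descriptiveness.

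First I would treat objects. Given a descriptive $\Box$-frame $(X,\leq,R,A)$, let $\topo{X}=(X,\leq,\tau_A)$ be the associated I-space from Proposition~\ref{prop:DIF-IS} and define $\gamma_R:\fun{I}\topo{X}\to\fun{V}\topo{X}$ by $\gamma_R(x)=R[x]$. Descriptiveness says exactly $R[x]=\bigcap\{a\in A\mid R[x]\subseteq a\}$, so $R[x]$ is a closed filter and $\gamma_R$ is well-typed. Preservation of the top is $(B_1)$; preservation of binary meets follows from $(B_3)$ and $(B_4)$ together with the identity $c\wwedge c'=\{y\wedge z\mid y\in c,\ z\in c'\}$ established after Lemma~\ref{lem:V-iso}, exactly as in Claim~\ref{clm:iG-dialg1}. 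For continuity it suffices to compute $\gamma_R^{-1}$ of the subbasic clopens: since a clopen prime downset is the complement of a clopen filter, $\ddiamond b=(\dbox a)^{c}$, so the topology on $\fun{V}\topo{X}$ is generated by the $\dbox a$ with $a\in A$, and $\gamma_R^{-1}(\dbox a)=\{x\mid R[x]\subseteq a\}=m_{\Box}(a)$, which lies in $A$ and is therefore clopen. Conversely, given an $(\fun{I},\fun{V})$-dialgebra $(\topo{X},\gamma)$, define $xR_{\gamma}y\iff y\in\gamma(x)$. Each $\gamma(x)$ is a closed filter, so $(B_2)$ and descriptiveness hold automatically, $(B_1)$ follows from $\gamma(\top)=\{\top\}$, and $(B_3)$, $(B_4)$ from meet-preservation of $\gamma$. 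Finally $m_{\Box}(a)=\gamma^{-1}(\dbox a)$ is a filter (since $\gamma$ is a monotone semilattice map and $a$ is a filter) and clopen (since $\dbox a$ is clopen in $\fun{V}\topo{X}$ and $\gamma$ is continuous), so $\CFil\topo{X}=A$ is closed under $m_{\Box}$ and $(X,\leq,R_{\gamma},A)$ is a descriptive $\Box$-frame. These two constructions are visibly mutually inverse on objects.

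Then I would do morphisms. By Proposition~\ref{prop:DIF-IS}, an I-space morphism between the underlying I-spaces is the same thing as a descriptive I-frame morphism between the underlying descriptive I-frames, so a general $\Box$-frame morphism is precisely an I-space morphism that is in addition bounded for $R$, $R'$. It then remains to show, for an I-space morphism $f:\topo{X}\to\topo{X}'$ with associated relations $R,R'$, that $f$ is bounded for $R,R'$ iff the square $\fun{V}f\circ\gamma_R=\gamma_{R'}\circ\fun{I}f$ commutes; this is the same chain of equivalences as in the last claim of the proof of Theorem~\ref{thm:iG-dialg}, using that $(\fun{V}f)(c)=f[c]$ and that boundedness of $f$ for $R$ amounts to $f[R[x]]=R'[f(x)]$ for all $x$. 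Combining the object and morphism parts yields the claimed isomorphism of categories.

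The main obstacle, and the only content not already present in Theorem~\ref{thm:iG-dialg}, is the topological bookkeeping: checking that $\gamma_R$ is continuous and, conversely, that continuity of $\gamma$ is equivalent to $A$ being closed under $m_{\Box}$. The key identities are $\ddiamond b=(\dbox a)^{c}$ (so it is enough to consider the $\dbox a$) and $\gamma^{-1}(\dbox a)=m_{\Box}(a)$; with Lemma~\ref{lem:V-iso} in hand — which guarantees that $\dbox a$ is genuinely clopen in $\fun{V}\topo{X}$ and that closed filters are principal — these reduce continuity to the closure condition on $A$, and everything else transcribes routinely from the $\cat{IF_{\Box}}$ argument.
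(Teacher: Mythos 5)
Your proposal is correct and follows essentially the same route as the paper's proof: translate between $R$ and $\gamma_R$, use descriptiveness for closedness of $R[x]$, conditions $(B_1)$--$(B_4)$ for the semilattice morphism properties, and closure of $A$ under $m_{\Box}$ (via $\gamma^{-1}(\dbox a)=m_{\Box}(a)$) for continuity. The only difference is that you spell out the morphism correspondence, which the paper dismisses as routine by appeal to the argument of Theorem~\ref{thm:iG-dialg}.
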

\begin{proof}
  We first establish the isomorphism on objects.
  
\begin{clm}
  For a descriptive $\Box$-frame $(X, \leq, R, A)$, let
  $\topo{X}$ be the I-space isomorphic to the descriptive I-frame $(X, \leq, A)$
  and define
  $$
    \gamma_R : \fun{I}\topo{X} \to \fun{V}\topo{X} : x \mapsto R[x].
  $$
  Then $(\topo{X}, \gamma_R)$ is a $(\fun{I}, \fun{V})$-dialgebra.
\end{clm}
\begin{subproof}[Proof of claim]
  We need to show that $\gamma_R$ is an M-space morphism from $\topo{X}$ to
  $\fun{V}\topo{X}$. It is well defined since the fact that $(X, \leq, R, A)$ is
  descriptive implies that $\gamma_R(x) = R[x]$ is a closed filter for each $x$.
  Besides, $\gamma_R$ preserves the top element because $\gamma_R(\top) = R[\top] = \{ \top \}$
  and the latter is the top element in $\fun{V}\topo{X}$.
  To see that it preserves binary meets we can compute
  $$
  \gamma_R(x) \wwedge \gamma_R(y)
    = R[x] \wwedge R[y]
    = \{ z \wedge w \mid z \in R[x], w \in R[y] \}
    = R[x \wedge y]
    = \gamma_R(x \wedge y)
  $$
  Here the third equality follows from \ref{it:box-frm3} and \ref{it:box-frm4}
  in Definition \ref{def:box-frm}.
  Finally, $\gamma_R$ is continuous because the topology on $\fun{V}\topo{X}$ is
  generated by subsets of the form $\dbox a$, where $a \in A$, and their complements,
  and $\gamma_R^{-1}(\dbox a) = m_{\Box}(a) \in A$ for all $a \in A$.
\end{subproof}

\begin{clm}
  Let $(\topo{X}, \gamma)$ be a $(\fun{I}, \fun{V})$-dialgebra.
  Let $(X, \leq, A)$ be the descriptive I-frame corresponding to $\topo{X}$ and
  define the relation $R_{\gamma}$ on $X$ by $xR_{\gamma}y$ iff $y \in \gamma(x)$.
  Then $(X, \leq, R, A)$ is a descriptive $\Box$-frame.
\end{clm}
\begin{subproof}[Proof of claim]
  This is indeed a general $\Box$-frame because for every clopen filter $a \in A$
  we have $m_{\Box}(a) = \gamma^{-1}(\dbox a) \in \CFil\topo{X} = A$.
  It is descriptive because $R_{\gamma}[x] = \gamma(x)$ is the intersection of
  clopen filters.
\end{subproof}

  It is easy to see that the two assignments define an isomorphism on objects:
  We already know the correspondence for the underlying I-spaces (descriptive I-frames),
  and on top of this we have $xRy$ iff $y \in \gamma_R(x)$ iff $xR_{\gamma_R}y$,
  and $y \in \gamma(x)$ iff $xR_{\gamma}y$ iff $y \in \gamma_{R_{\gamma}}x$.

  The verification of the isomorphism on morphisms is routine.
\end{proof}

  The proof of the following theorem was sketched at the start of this section.

\begin{thm}
  We have a dual equivalence of categories
  $$
    \cat{ISpace_{\Box}} \equiv^{\op} \cat{ISL_{\Box}}.
  $$
\end{thm}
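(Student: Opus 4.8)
The plan is to assemble the chain of equivalences displayed at the start of this section, working in from its two endpoints. The two endpoints are already available: Theorem~\ref{thm:ISpace-Dialg} gives $\cat{ISpace_{\Box}} \cong \cat{Dialg}(\fun{I},\fun{V})$, and Proposition~\ref{prop:ISLb-dialg} gives $\cat{ISL_{\Box}} \cong \cat{Dialg}(\fun{N},\fun{j})$. The remark following Proposition~\ref{prop:ISLb-dialg} records that $\fun{N}$ is naturally isomorphic to the inclusion $\fun{j}$, and since replacing a functor that defines a dialgebra category by a naturally isomorphic one yields an isomorphic category, this already reduces the algebraic end to $\cat{Dialg}(\fun{j},\fun{j})$. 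So the task is to relate $\cat{Dialg}(\fun{I},\fun{V})$ to $\cat{Dialg}(\fun{j},\fun{j})$.

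First I would dispose of $\fun{V}$. Lemma~\ref{lem:V-I} says $\fun{V}$ is naturally isomorphic to $\TFil\cdot\CFil$ as a functor $\cat{ISpace}\to\cat{MSpace}$, and because $\CFil$ and $\TFil$ constitute a dual equivalence (Theorems~\ref{thm:msl-duality} and~\ref{thm:IS-ISL}) we have $\TFil\cdot\CFil\cong\id_{\cat{ISpace}}$, hence $\fun{V}\cong\fun{I}$. Invariance of dialgebra categories under natural isomorphism of the defining functors then gives $\cat{Dialg}(\fun{I},\fun{V})\cong\cat{Dialg}(\fun{I},\fun{I})$. What remains is the single genuinely contravariant step $\cat{Dialg}(\fun{I},\fun{I})\equiv^{\op}\cat{Dialg}(\fun{j},\fun{j})$, which I would obtain by lifting the base dual equivalence $\cat{ISpace}\equiv^{\op}\cat{ISL}$ to dialgebras: the point is that this duality refines the semilattice duality compatibly with the two inclusions, namely $\CFil\circ\fun{I}=\fun{j}\circ\CFil$ (an I-space has literally the same clopen filters when viewed as an M-space) and dually $\TFil\circ\fun{j}=\fun{I}\circ\TFil$. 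Given this, $\CFil$ carries a $(\fun{I},\fun{I})$-dialgebra $\gamma:\fun{I}\topo{X}\to\fun{I}\topo{X}$ to the $(\fun{j},\fun{j})$-dialgebra $\CFil\gamma:\fun{j}\CFil\topo{X}\to\fun{j}\CFil\topo{X}$, $\TFil$ goes back the other way, and the unit and counit of the base duality upgrade to dialgebra isomorphisms. Composing the four steps yields
$$
\cat{ISpace_{\Box}}\cong\cat{Dialg}(\fun{I},\fun{V})\cong\cat{Dialg}(\fun{I},\fun{I})\equiv^{\op}\cat{Dialg}(\fun{j},\fun{j})\cong\cat{Dialg}(\fun{N},\fun{j})\cong\cat{ISL_{\Box}},
$$
the claimed dual equivalence.

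The genuinely routine parts are the three bona fide isomorphisms supplied by Theorem~\ref{thm:ISpace-Dialg}, Proposition~\ref{prop:ISLb-dialg}, and the two replacements $\fun{N}\cong\fun{j}$ and $\fun{V}\cong\fun{I}$. The step I expect to cost the most care is the lifting of the base dual equivalence to the dialgebra level: one must verify that the natural isomorphisms relating $\CFil$, $\TFil$ to the inclusions $\fun{I}$, $\fun{j}$ are coherent with the unit $\eta$ and counit of the $\CFil\dashv\TFil$ duality, so that the transported assignments are mutually quasi-inverse on dialgebra \emph{morphisms}, not merely on objects. This can either be done by a direct diagram chase --- which is short here because all the functors in question are inclusions --- or by invoking the general machinery of dialgebraic logic from~\cite{GroPat20}, where precisely this kind of transfer of dualities along functors is established.
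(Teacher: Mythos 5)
Your proposal is correct and follows essentially the same route as the paper: the chain $\cat{ISpace_{\Box}} \cong \cat{Dialg}(\fun{I},\fun{V}) \cong \cat{Dialg}(\fun{I},\fun{I}) \equiv^{\op} \cat{Dialg}(\fun{j},\fun{j}) \cong \cat{Dialg}(\fun{N},\fun{j}) \cong \cat{ISL_{\Box}}$, using Theorem~\ref{thm:ISpace-Dialg}, Lemma~\ref{lem:V-I}, the inclusion-compatibility of the base duality, $\fun{N}\cong\fun{j}$, and Proposition~\ref{prop:ISLb-dialg}. The only difference is that you spell out the coherence checks for the contravariant lifting step, which the paper compresses into the remark that dialgebras dualise and that $\fun{I}$, $\fun{j}$ are inclusions.
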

\begin{proof}
  According to Theorem~\ref{thm:ISpace-Dialg} we have
  $\cat{ISpace_{\Box}} \cong \cat{Dialg}(\fun{I}, \fun{V})$.
  It follows from Lemma~\ref{lem:V-I} that $\fun{V}$ is naturally isomorphic
  to $\fun{I}$, so
  $\cat{Dialg}(\fun{I}, \fun{V})$ is equivalent to $\cat{Dialg}(\fun{I}, \fun{I})$.
  The dual nature of dialgebras, together with the facts that $\fun{I}$ and
  $\fun{j}$ are inclusion functors, implies that
  $\cat{Dialg}(\fun{I}, \fun{I})$ is dual to $\cat{Dialg}(\fun{j}, \fun{j})$.
  Since $\fun{j}$ is naturally isomorphic to $\fun{N}$ the latter is
  equivalent to $\cat{Dialg}(\fun{N}, \fun{j})$,
  which in turn is isomorphic to $\cat{ISL_{\Box}}$ by
  Proposition~\ref{prop:ISLb-dialg}.
\end{proof}

  As a corollary we obtain completeness for $\mathbf{MI_{\Box}}$. 

\begin{thm}
  The system $\mathbf{MI_{\Box}}$ is complete with respect to $\Box$-models
  and descriptive $\Box$-models.
\end{thm}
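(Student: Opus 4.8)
The plan is to mimic the proof of Theorem~\ref{thm:MI-complete}, now bootstrapping on the dual equivalence $\cat{ISpace_{\Box}} \equiv^{\op} \cat{ISL_{\Box}}$ just established. By construction $\mathbf{MI_{\Box}}$ is sound and complete with respect to its algebraic semantics: the Lindenbaum--Tarski algebra of $\mathbf{MI_{\Box}}$ is an ISLO, since the rule (cong) makes $\Box$ well defined on provable-equivalence classes and the axioms \ref{ax:box1} and \ref{ax:box2} make it meet-preserving. Hence, if $\mathbf{MI_{\Box}} \not\vdash \phi$, there are an ISLO $(A,\Box)$ and a valuation $v : \Prop \to A$ with $\llbracket\phi\rrbracket_v \neq \top$, where $\llbracket\cdot\rrbracket_v$ denotes the algebraic interpretation.

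I would then transport this countermodel across the duality. Let $\topo{X} = \TFil A$ be the dual I-space and let $\gamma : \fun{I}\topo{X} \to \fun{V}\topo{X}$ be the dialgebra structure obtained from $\Box$ along the chain $\cat{ISL_{\Box}} \cong \cat{Dialg}(\fun{N},\fun{j}) \cong \cat{Dialg}(\fun{j},\fun{j}) \equiv^{\op} \cat{Dialg}(\fun{I},\fun{I}) \cong \cat{Dialg}(\fun{I},\fun{V}) \cong \cat{ISpace_{\Box}}$ of (dual) equivalences. Writing $R = R_{\gamma}$ (so $xRy$ iff $y \in \gamma(x)$), the tuple $(X,\leq,R,\CFil\topo{X})$ is a descriptive $\Box$-frame by Theorem~\ref{thm:ISpace-Dialg}; equip it with the valuation $V$ sending $p$ to the clopen filter $\widetilde{v(p)} = \{ q \in \fun{F}A \mid v(p) \in q \}$, and call the resulting descriptive $\Box$-model $\mo{M}$.

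The core of the argument is a \emph{truth lemma}: for every $\lan{L}_{\Box}$-formula $\psi$ and every $q \in \topo{X}$,
$$
  \mo{M}, q \Vdash \psi \iff \llbracket\psi\rrbracket_v \in q ,
$$
proved by induction on $\psi$. The cases $\top$, propositional variables and $\wedge$ are immediate; the case $\to$ is exactly the identity $\widetilde{a \to b} = \tilde{a} \mto \tilde{b}$ proved inside Theorem~\ref{thm:IS-ISL}. For $\Box\psi$ one uses that $\Box$ dualizes to a $\gamma$ with $\gamma^{-1}(\dbox \tilde{a}) = m_{\Box}(\tilde{a}) = \widetilde{\Box a}$ for all clopen filters $\tilde{a}$; together with the induction hypothesis this gives $\mo{M}, q \Vdash \Box\psi$ iff $R[q] \subseteq \widetilde{\llbracket\psi\rrbracket_v}$ iff $q \in m_{\Box}(\widetilde{\llbracket\psi\rrbracket_v}) = \widetilde{\Box\llbracket\psi\rrbracket_v} = \widetilde{\llbracket\Box\psi\rrbracket_v}$. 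Given the truth lemma, $\llbracket\phi\rrbracket_v \neq \top$ produces a filter $q$ with $\llbracket\phi\rrbracket_v \notin q$, so $\mo{M}, q \not\Vdash \phi$: this is a descriptive $\Box$-model refuting $\phi$. Completeness with respect to arbitrary $\Box$-models then follows, since every descriptive $\Box$-model is in particular a $\Box$-model.

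The step I expect to be the main obstacle is justifying $\gamma^{-1}(\dbox \tilde{a}) = \widetilde{\Box a}$, i.e.\ checking that the composite of the functorial isomorphisms ($\fun{N} \cong \fun{j}$ and $\fun{V} \cong \fun{I} \cong \TFil\cdot\CFil$, Lemma~\ref{lem:V-I}) with the semilattice duality of Theorem~\ref{thm:msl-duality} really sends the algebraic operator $\Box : A \to A$ to the relation $R_{\gamma}$ whose box-modality on admissibles is $m_{\Box}$. This is a diagram chase through the definitions (using $xR_{\gamma}y$ iff $y \in \gamma(x)$ from Theorem~\ref{thm:ISpace-Dialg}) rather than a deep argument, but it is where all the pieces must be made to fit; everything else is the routine Esakia-style induction.
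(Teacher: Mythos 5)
Your proposal is correct and takes essentially the same route the paper intends: the paper's proof is only the one-line remark that it ``proceeds in a similar way as for $\MI$'', i.e.\ algebraic completeness via the Lindenbaum--Tarski ISLO followed by transport across the duality $\cat{ISpace_{\Box}} \equiv^{\op} \cat{ISL_{\Box}}$, with descriptive $\Box$-models being in particular $\Box$-models. Your expansion — the truth lemma, the case $\widetilde{a \to b} = \tilde{a} \mto \tilde{b}$ from Theorem~\ref{thm:IS-ISL}, and the identity $\gamma^{-1}(\dbox\tilde{a}) = m_{\Box}(\tilde{a}) = \widetilde{\Box a}$ (which indeed follows by the compactness argument of Lemma~\ref{lem:V-iso}) — is exactly the content the paper leaves implicit.
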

\begin{proof}
  The proof of this theorem proceeds in a similar way as for $\MI$
  in Section \ref{sec:base}.
\end{proof}

  The astute reader will have noticed that all functors used in the dialgebraic
  perspective explained above are (naturally isomorphic to) inclusion functors.
  We will encounter functors for which this is not the case in
  Section~\ref{sec:monotone}.

\section{Conservative extensions}\label{sec:fragments}

  There are many variations of modal intuitionistic logic,
  and usually the $\Box$-modality satisfies the axioms \ref{ax:box1} and \ref{ax:box2}
  (defined in Section \ref{sec:normal} above).
  Therefore, the various semantics of these versions of modal intuitionistic
  logic all provide sound semantics for $\mathbf{MI_{\Box}}$.
  In this section we show that in fact $\mathbf{MI_{\Box}}$ is complete with respect
  to many of these semantics. This then implies that $\mathbf{MI_{\Box}}$
  axiomatises the $(\top, \wedge, \to, \Box)$-fragment of the modal intuitionistic
  logics under consideration.
  
  Our strategy for achieving this is simple: we show that $\Box$-frames
  and -models are special cases of the semantics corresponding to various
  variations of modal intuitionistic logic. Then, letting $\cat{K}$ denote
  such a class of frames for modal intuitionistic logic, we argue that
  $\cat{K} \Vdash \phi$ implies $\cat{IF_{\Box}} \Vdash \phi$,
  which we know implies $\mathbf{MI_{\Box}} \vdash \phi$.

  We make frequent use of the following proposition.
  Here, if $R_1$ and $R_2$ are two relations on a set $X$, then we write
  $R_1 \circ R_2 := \{ (x, z) \in X \times X \mid \exists y \in X \text{ s.t. } xR_1y \text{ and } y R_2 z \}$.

\begin{prop}\label{prop:rels}
  Let $(X, \leq, R)$ be a $\Box$-frame. Then we have
  $$
    R = ({\leq} \circ R \circ {\leq})
    \quad\text{ and }\quad
    ({\leq}^{-1} \circ R) \subseteq (R \circ {\leq}^{-1}).
  $$
\end{prop}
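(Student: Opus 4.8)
The plan is to verify each of the three inclusions/equalities separately, relying on the axioms $(B_1)$–$(B_4)$ of Definition~\ref{def:box-frm}, and in particular on the monotonicity axiom $(B_2)$ together with the ``successor-splitting'' axiom $(B_4)$. For the equality $R = ({\leq} \circ R \circ {\leq})$, the inclusion $R \subseteq ({\leq} \circ R \circ {\leq})$ is immediate by reflexivity of $\leq$ (sandwich $xRy$ between $x \leq x$ and $y \leq y$). For the converse, suppose $x \leq x'$, $x' R y'$ and $y' \leq y$; I need $xRy$. From $x' R y' \leq y$ and $(B_2)$ we get $x' R y$, so it suffices to show that $x \leq x'$ and $x' R y$ imply $xRy$; equivalently, that $({\leq} \circ R) \subseteq R$. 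This is the content of the claim that $R$ is ``down-closed on the left'', and I expect it to follow from $(B_4)$: write $x' = x \wedge x'$ (using $x \leq x'$), so from $x' R y$, i.e.\ $(x \wedge x')Ry$, axiom $(B_4)$ yields $z, z'$ with $xRz$, $x'Rz'$ and $z \wedge z' = y$; then $y = z \wedge z' \leq z$ wait — $y \leq z$ gives the wrong direction. I would instead argue: from $xRz$ with $y = z \wedge z' \leq z$, I cannot directly conclude $xRy$ from $(B_2)$ since $(B_2)$ goes upward. The correct route is to note $y \leq z$ is false; rather $z \wedge z' = y$ means $y \leq z$, so $(B_2)$ with $xRz$ and... hmm, $(B_2)$ needs $xRz \leq (\text{something})$. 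Let me reconsider: since $xRz$ and we want $xRy$ with $y \leq z$, $(B_2)$ does not apply. So the left-down-closure must instead be derived differently, perhaps by combining $(B_3)$ with $(B_1)$: from $x \leq x'$ and $x'Ry$, also $xR\top$ by $(B_1)$, so by $(B_3)$ $(x \wedge x')R(\top \wedge y) = (x)R(y)$ since $x \wedge x' = x$ and $\top \wedge y = y$. That does it.

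For the second part, $({\leq}^{-1} \circ R) \subseteq (R \circ {\leq}^{-1})$, I would unfold: suppose $x' \leq x$ (so $x \mathbin{{\leq}^{-1}} x'$ — care with direction) and $x R y$... actually $({\leq}^{-1} \circ R)$ relates $x$ to $z$ when there is $w$ with $x \mathbin{{\leq}^{-1}} w$ (i.e.\ $w \leq x$) and $wRz$. So: given $w \leq x$ and $wRz$, I must find $y$ with $xRy$ and $y \mathbin{{\leq}^{-1}} z$, i.e.\ $z \leq y$. Since $w \leq x$, write $x = w \wedge x$ — no, $w \leq x$ gives $w \wedge x = w$, not $x$. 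Instead use $(B_3)$ with $wRz$ and $xR\top$ (by $(B_1)$): $(w \wedge x)R(z \wedge \top)$, i.e.\ $wRz$ again — unhelpful. The right tool here is $(B_4)$ applied in reverse: I want to ``lift'' $wRz$ along $w \leq x$. I would use $(B_4)$ on $x$: since $x = $ ... this needs $x$ expressed as a meet involving $w$. Alternatively, I expect this inclusion to follow from the first equality plus a direct argument: from $w \leq x$ and $wRz$ we get $w R z$ hence (by the first equality, reading $x \leq$ wait, we need $w \leq x$ on the left) — actually $x \in {\uparrow}w$, and $R \circ {\leq} \subseteq R$ is false in general (that's the $\Box$-direction). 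I think the cleanest path is: given $w \leq x$ and $wRz$, apply $(B_4)$ to $(w)R z$ written as $(w \wedge w)Rz$ — trivial. The genuine move: use that $(X,R)$-upsets matter; from $wRz$ and $w \leq x$, consider... I would fall back on $(B_2)$-dual reasoning combined with compactness-free arguments.

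The \textbf{main obstacle} I anticipate is the second inclusion $({\leq}^{-1} \circ R) \subseteq (R \circ {\leq}^{-1})$: it is precisely a confluence/commutation property between $\leq$ and $R$, and getting the directions of the inequalities to line up correctly with the available axioms $(B_2)$ (upward closure of $R$ in the second argument) and $(B_4)$ (meet-decomposition of $R$-successors) requires care. My plan is to prove it by taking $w \leq x$ and $wRz$, then writing $x$ via its meet structure to invoke $(B_4)$: since the poset has binary meets, $w \leq x$ is equivalent to $w = w \wedge x$; then from $(w \wedge x) \mathbin{=} w$ and $wRz$, I reinterpret $wRz$ as $(w \wedge x)Rz$ and apply $(B_4)$ to obtain $y_1, y_2$ with $wRy_1$, $xRy_2$, $y_1 \wedge y_2 = z$; then $z \leq y_2$ and $xRy_2$ give exactly the witness $y = y_2$ for $R \circ {\leq}^{-1}$. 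I would double-check that $y_1 \wedge y_2 = z$ indeed gives $z \leq y_2$ (yes, a meet is below each conjunct), completing the argument. Throughout, I would be careful to state explicitly which axiom is used at each step, since the proof is short but the bookkeeping of $\leq$ versus ${\leq}^{-1}$ and of which argument of $R$ is monotone is error-prone.
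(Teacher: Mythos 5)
Your final arguments are correct and coincide with the paper's own proof: reflexivity of $\leq$ gives $R \subseteq ({\leq}\circ R\circ{\leq})$, axiom (B$_2$) gives $R\circ{\leq}\subseteq R$, the combination of (B$_1$) and (B$_3$) (using $xR\top$ and $x\wedge x'=x$) gives ${\leq}\circ R\subseteq R$, and for the second inclusion rewriting $wRz$ as $(w\wedge x)Rz$ and applying (B$_4$) produces the required witness. The false starts along the way (trying (B$_4$) or (B$_2$) where they do not apply) are resolved correctly, so the finished argument is essentially identical to the paper's.
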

\begin{proof}
  We first prove the left equality.
  The inclusion from left to right follows from reflexivity of $\leq$. Furthermore,
  it follows from \ref{it:box-frm2} that $R = (R \circ {\leq})$,
  so it suffices to show that $({\leq} \circ R) \subseteq R$. In a diagram:
  $$
    \begin{tikzcd}[arrows=-]
      y \arrow[d, "\leq" swap] \arrow[r, "R"]
        & z \\
      x \arrow[ru, dashed, "R" swap]
    \end{tikzcd}
  $$
  If $x \leq y$ then $x \wedge y = x$. Also, by \ref{it:box-frm1}
  we have $xR\top$. It then follows from \ref{it:box-frm3} that
  $(x \wedge y)R(z \wedge \top)$, i.e.~$xRz$.
  
  The inclusion on the right follows from \ref{it:box-frm4}: if $xRz$ and $x \leq x'$
  then $(x \wedge x')Rz$ so according to \ref{it:box-frm4} we can find a states $y, y'$
  such that $xRy$ and $x'Ry'$ and $y \wedge y' = z$. The latter implies
  $z \leq y'$, so $y'$ witnesses the desired inclusion.
\end{proof}

  We now consider various flavours of modal intuitionistic logic.
  We write $\Int_{\Box}$ and
  $\Int_{\Box\Diamond}$ for the extensions of
  intuitionistic logic with $\Box$, and with $\Box$ and $\Diamond$,
  respectively.
 
\subsection*{The systems \texorpdfstring{$\mathbf{H\Box}$}{HBox} and \texorpdfstring{$\mathbf{H{\Box\Diamond}}$}{HBoxDiamond}
           by Bo\v{z}i\'{c} and Do\v{s}en}
  In \cite{BozDos84} the authors consider the logic $\mathbf{HK\Box}$,
  which is an extension of intuitionistic logic with $\Box$ axiomatised by
  \ref{ax:box1} and \ref{ax:box2}.
  
  The interpreting structures are \emph{$H\Box$-frames} \cite[Definition 2]{BozDos84}
  (found under a different name in \cite[Definition 3.1]{Pro12}).
  A $H\Box$-frame is a tuple $(X, \leq, R)$ consisting of a pre-order $(X, \leq)$
  and a relation $R$ on $X$ satisfying $({\leq} \circ R) \subseteq (R \circ {\leq})$.
  Such a frame is called \emph{condensed} if $(R \circ {\leq}) = R$ and
  \emph{strictly condensed} if $({\leq} \circ R \circ {\leq}) = R$.
  They become \emph{$H\Box$-models} if one attaches to it a valuation
  that interprets each proposition letter as an upset in $(X, \leq)$.
  The box-modality is interpreted as usual, i.e.~a state $x$ satisfies
  $\Box\phi$ if all its $R$-successors satisfy~$\phi$.
  
  It follows from Proposition \ref{prop:rels} and the fact that a valuation for
  a $\Box$-frame sends a proposition letter to a filter, which in particular is
  an upset, that every $\Box$-model is also a strictly condensed $H\Box$-model.
  Therefore $\mathbf{MI_{\Box}}$ is sound and complete with respect to
  the class of ((strictly) condensed) $H\Box$-frames and
  $\mathbf{MI_{\Box}}$ characterises the $(\top, \wedge, \to, \Box)$-fragment
  of $\mathbf{HK\Box}$. As a consequence we have:

\begin{thm}
  The system $\mathbf{HK}\Box$ is a conservative extension of $\mathbf{MI_{\Box}}$.
\end{thm}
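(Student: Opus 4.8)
The plan is to show that $\mathbf{HK}\Box \vdash \phi$ implies $\mathbf{MI_\Box} \vdash \phi$ for every $\lan{L}_\Box$-formula $\phi$; the converse inclusion is immediate since every axiom and rule of $\mathbf{MI_\Box}$ is among those of $\mathbf{HK}\Box$ (both share axioms \ref{ax:box1}, \ref{ax:box2}, the congruence rule, and $\mathbf{HK}\Box$ extends full intuitionistic logic, which conservatively extends $\mathbf{MI}$ by the remark at the end of Section~\ref{sec:base}). So the content is the conservativity of $\mathbf{HK}\Box$ over $\mathbf{MI_\Box}$ in the $(\top,\wedge,\to,\Box)$-signature.

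First I would argue contrapositively: suppose $\mathbf{MI_\Box} \not\vdash \phi$. By completeness of $\mathbf{MI_\Box}$ with respect to $\Box$-models (the completeness theorem proved via the duality in Section~\ref{sec:duality}), there is a $\Box$-model $\mo{M} = (X,\leq,R,V)$ and a world $x$ with $\mo{M},x \not\Vdash \phi$. The key step is then to observe that $\mo{M}$ is \emph{already} an $H\Box$-model in the sense of Bo\v{z}i\'{c}--Do\v{s}en: indeed $(X,\leq)$ is a poset (in particular a pre-order), and Proposition~\ref{prop:rels} gives ${\leq}\circ R \circ {\leq} = R$, so in particular $({\leq}\circ R) \subseteq ({\leq}\circ R\circ{\leq}) = R \subseteq (R\circ{\leq})$, which is the frame condition $({\leq}\circ R)\subseteq(R\circ{\leq})$ required of $H\Box$-frames (and in fact $\mo{M}$ is strictly condensed). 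Moreover $V$ assigns to each proposition letter a filter, which is in particular an upset, so $V$ is a legitimate $H\Box$-valuation. Since the interpretation clauses for $\top$, $\wedge$, $\to$ and $\Box$ in an $H\Box$-model coincide verbatim with those in a $\Box$-model, the satisfaction relation is unchanged, so $\phi$ fails at $x$ in $\mo{M}$ viewed as an $H\Box$-model. By soundness of $\mathbf{HK}\Box$ with respect to $H\Box$-models, $\mathbf{HK}\Box \not\vdash \phi$.

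Putting the two directions together: $\mathbf{HK}\Box \vdash \phi$ iff $\mathbf{MI_\Box}\vdash\phi$ for $\lan{L}_\Box$-formulae, which is precisely the assertion that $\mathbf{HK}\Box$ is a conservative extension of $\mathbf{MI_\Box}$. I would phrase the write-up as: the forward inclusion is immediate from inspecting axioms and rules; the backward inclusion is the displayed contrapositive argument, citing completeness of $\mathbf{MI_\Box}$, Proposition~\ref{prop:rels} to check the $H\Box$-frame condition, the fact that filters are upsets, and soundness of $\mathbf{HK}\Box$. The only genuinely delicate point — and the one I would be most careful to state explicitly rather than grind through — is the verification that a $\Box$-model satisfies the $H\Box$-frame inclusion and that its valuation is admissible as an $H\Box$-valuation; everything else is bookkeeping. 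This is exactly the "strategy" already announced in the opening paragraphs of Section~\ref{sec:fragments}, instantiated for $\cat{K}$ the class of $H\Box$-frames.
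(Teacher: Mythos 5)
Your proposal is correct and follows essentially the same route as the paper: the paper's proof consists exactly of invoking Proposition~\ref{prop:rels} together with the fact that valuations of $\Box$-models take values in filters (hence upsets) to conclude that every $\Box$-model is a strictly condensed $H\Box$-model, and then combining completeness of $\mathbf{MI_{\Box}}$ with respect to $\Box$-models with soundness of $\mathbf{HK}\Box$ with respect to $H\Box$-models. Your explicit check of the inclusion $({\leq}\circ R)\subseteq(R\circ{\leq})$ from $R=({\leq}\circ R\circ{\leq})$ and your contrapositive phrasing are just slightly more detailed renderings of the argument the paper gives.
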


  Furthermore, in \S11 the authors also introduce the system $\mathbf{HK{\Box\Diamond}}$
  for the language $\Int_{\Box\Diamond}$.
  A semantics is given by the collection of so-called $H{\Box}{\Diamond}$-frames (models).
  These are strictly condensed $H\Box$-frames (models) that additionally satisfy
  $$
    ({\geq} \circ R) \subseteq (R \circ {\geq}).
  $$
  Again, by Proposition \ref{prop:rels}, every $\Box$-model is a
  $H{\Box\Diamond}$-model and $\mathbf{MI_{\Box}}$ axiomatises the
  $(\top, \wedge, \to, \Box)$-fragment of $\mathbf{HK{\Box\Diamond}}$.
  Therefore:

\begin{thm}
  The system $\mathbf{HK}\Box\Diamond$ is a conservative extension of $\mathbf{MI_{\Box}}$.
\end{thm}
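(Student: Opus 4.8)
The statement amounts to the equivalence $\mathbf{MI_{\Box}} \vdash \phi$ iff $\mathbf{HK}\Box\Diamond \vdash \phi$ for every $\lan{L}_{\Box}$-formula $\phi$, and the plan is to prove the two directions separately, mirroring the treatment of $\mathbf{HK}\Box$ above. For the inclusion $\mathbf{MI_{\Box}} \vdash \phi \Rightarrow \mathbf{HK}\Box\Diamond \vdash \phi$ I would check that $\mathbf{HK}\Box\Diamond$ proves each axiom and is closed under each rule of $\mathbf{MI_{\Box}}$: the axioms (H$_1$)--(H$_6$) are theorems of intuitionistic propositional logic (they hold in every Heyting algebra), the modal axioms \ref{ax:box1} and \ref{ax:box2} are literally among the axioms of $\mathbf{HK}\Box\Diamond$, modus ponens is common to both systems, and the congruence rule (cong) is derivable, since $p \leftrightarrow q$ yields $p \wedge q \leftrightarrow p$ and $p \wedge q \leftrightarrow q$ intuitionistically, hence $\Box p \leftrightarrow \Box(p \wedge q) \leftrightarrow \Box q$ using \ref{ax:box1}.

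For the conservativity direction I would argue semantically. Assume $\mathbf{HK}\Box\Diamond \vdash \phi$ with $\phi \in \lan{L}_{\Box}$; by soundness of $\mathbf{HK}\Box\Diamond$ the formula $\phi$ is valid on every $H\Box\Diamond$-model. The key step is to observe that every $\Box$-model is an $H\Box\Diamond$-model: if $(X, \leq, R)$ is a $\Box$-frame, then $({\leq} \circ R \circ {\leq}) = R$ by Proposition~\ref{prop:rels}, which (together with reflexivity of $\leq$) both makes $(X, \leq, R)$ a strictly condensed $H\Box$-frame and gives $({\leq} \circ R) \subseteq (R \circ {\leq})$, while Proposition~\ref{prop:rels} also supplies $({\geq} \circ R) \subseteq (R \circ {\geq})$, which is precisely the extra condition in the definition of an $H\Box\Diamond$-frame; moreover a $\Box$-model valuation assigns filters, which are in particular $\leq$-upsets, so it is an admissible $H\Box\Diamond$-valuation, and the satisfaction clauses for $\top$, $\wedge$, $\to$ and $\Box$ coincide, so the truth value of the $\Diamond$-free formula $\phi$ is unchanged. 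It follows that $\cat{IF_{\Box}} \Vdash \phi$, and completeness of $\mathbf{MI_{\Box}}$ with respect to $\Box$-models (Section~\ref{sec:duality}) then yields $\mathbf{MI_{\Box}} \vdash \phi$.

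I expect the proof to be almost entirely routine: the substantive ingredients --- Proposition~\ref{prop:rels} and the completeness of $\mathbf{MI_{\Box}}$ --- are already in place, so the only real obstacle is the bookkeeping of checking that the $\Box$-model-to-$H\Box\Diamond$-model passage does not alter the interpretation of any $\lan{L}_{\Box}$-formula, plus the minor point that (cong) is admissible in $\mathbf{HK}\Box\Diamond$. Since both observations have already appeared in the discussion preceding the statement, in the final text this theorem can be settled in a line or two, exactly as the corresponding result for $\mathbf{HK}\Box$.
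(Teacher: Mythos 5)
Your proposal is correct and follows essentially the same route as the paper: both directions are handled exactly as in the text, with the extension direction being routine (the axioms of $\mathbf{MI_{\Box}}$ are intuitionistic theorems or axioms of $\mathbf{HK}\Box\Diamond$) and conservativity obtained by using Proposition~\ref{prop:rels} to see that every $\Box$-model is a (strictly condensed) $H\Box\Diamond$-model with the same interpretation of $\Diamond$-free formulae, followed by completeness of $\mathbf{MI_{\Box}}$ with respect to $\Box$-models. The paper compresses all of this into the remark preceding the theorem, so your write-up is simply a more explicit version of the same argument.
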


\subsection*{The system \texorpdfstring{$\mathbf{IK}$}{IK} by Plotkin and Stirling}
  In \cite{PloSti86} the authors define an \emph{intuitionistic modal frame}
  as a tuple $(X, \leq, R)$ consisting of a poset $(X, \leq)$ and a relation $R$
  satisfying
  $$
    ({\geq} \circ R) \subseteq (R \circ {\geq})
    \quad\text{ and }\quad
    (R \circ {\leq}) \subseteq ({\leq} \circ R).
  $$
  These can be turned into models in the usual way.
  While the diamond is interpreted as usual (i.e.~a state satisfies
  $\Diamond\phi$ if is has an $R$-successor satisfying $\phi$),
  persistence of $\Box$ is achieved by defining
  $$
    x \Vdash \Box\phi \iff x({\leq} \circ R)y \text{ implies } y \Vdash \phi.
  $$
  When a frame satisfies $({\leq} \circ R) \subseteq R$ then reflexivity of
  $\leq$ implies that $({\leq} \circ R) = R$.
  In such cases, the interpretation of $\Box$ reduces to the usual one, i.e.
  $$
    x \Vdash \Box\phi \iff xRy \text{ implies } y \Vdash \phi.
  $$
  This interpretation coincides with the one given by Fisher Servi in \cite{Fis80}.

  It follows from Proposition \ref{prop:rels} again that $\Box$-frames
  are intuitionistic modal frames. Moreover, the fact that every $\Box$-frame
  satisfies $R = ({\leq} \circ R \circ {\leq})$ implies $({\leq} \circ R) \subseteq R$
  so in case of $\Box$-frames our interpretation coincides with the one
  used in \cite{PloSti86}.
  Therefore we have completeness of $\mathbf{MI_{\Box}}$
  with respect to intuitionistic modal frames.
  Again, soundness follows from the axiomatisation of the system
  $\mathbf{IK}$ presented in \cite{PloSti86}.
  So $\mathbf{MI_{\Box}}$ axiomatises the
  $(\top, \wedge, \to, \Box)$-fragment of $\mathbf{IK}$.

\begin{thm}
  The system $\mathbf{IK}$ is a conservative extension of $\mathbf{MI_{\Box}}$.
\end{thm}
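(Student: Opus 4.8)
This follows the template of the two preceding theorems, so the plan is to exhibit every $\Box$-model as a truth-preserving instance of the model semantics for $\mathbf{IK}$ --- the intuitionistic modal models of Plotkin and Stirling~\cite{PloSti86} --- and then to assemble the relevant soundness and completeness facts. Concretely, ``conservative extension'' unpacks into two implications for $\phi \in \lan{L}_{\Box}$. For $\mathbf{MI_{\Box}} \vdash \phi \Rightarrow \mathbf{IK} \vdash \phi$: the axioms (H$_1$)--(H$_6$) are intuitionistic theorems, ($\Box_1$) and ($\Box_2$) are theorems of any normal modal logic, and the rules (\mopo) and (cong) are available in $\mathbf{IK}$, so any $\mathbf{MI_{\Box}}$-derivation transfers verbatim.

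For the converse, $\mathbf{IK} \vdash \phi \Rightarrow \mathbf{MI_{\Box}} \vdash \phi$, I would argue semantically. By Proposition~\ref{prop:rels} every $\Box$-frame $(X, \leq, R)$ satisfies ${\geq} \circ R \subseteq R \circ {\geq}$ and, using that $\leq$ is a partial order together with $R = {\leq} \circ R \circ {\leq}$, also $R \circ {\leq} = R \subseteq {\leq} \circ R$; hence $(X,\leq,R)$ is an intuitionistic modal frame. The same identity gives ${\leq} \circ R = R$, so on $\Box$-frames the Plotkin--Stirling persistence clause for $\Box$ collapses to the ordinary one $x \Vdash \Box\phi$ iff ``$xRy$ implies $y \Vdash \phi$'' of Definition~\ref{def:box-frm}. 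Since valuations in $\Box$-models take values in filters, which are upsets, they are legitimate valuations for intuitionistic modal models, and the interpretation of every $\lan{L}_{\Box}$-formula therefore agrees in the two settings, i.e.\ every $\Box$-model is a special case of an intuitionistic modal model. Consequently, if $\mathbf{IK} \vdash \phi$ then, by soundness of $\mathbf{IK}$ over intuitionistic modal models, $\phi$ is valid on all $\Box$-models, whence $\mathbf{MI_{\Box}} \vdash \phi$ by completeness of $\mathbf{MI_{\Box}}$ with respect to $\Box$-models.

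The real content of the argument is Proposition~\ref{prop:rels}, already proved above, and in particular the collapse of the non-standard $\Box$-clause, which is exactly where $R = {\leq} \circ R \circ {\leq}$ is used; the remaining steps are bookkeeping about which frame conditions and which valuations the Plotkin--Stirling semantics demands. I therefore do not anticipate a serious obstacle: the only point requiring care is matching up the two notions of validity on the two classes of models, and this has essentially been carried out in the discussion preceding the theorem, so the proof reduces to citing that discussion together with the soundness of $\mathbf{IK}$ and the completeness of $\mathbf{MI_{\Box}}$.
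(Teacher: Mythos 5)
Your proposal is correct and follows essentially the same route as the paper: embed every $\Box$-model into the Plotkin--Stirling semantics via Proposition~\ref{prop:rels}, note that $({\leq}\circ R)=R$ collapses their persistence clause for $\Box$ to the standard one, and then combine soundness of $\mathbf{IK}$ with completeness of $\mathbf{MI_{\Box}}$ for $\Box$-models; the syntactic containment $\mathbf{MI_{\Box}}\subseteq\mathbf{IK}$ is handled in the paper by appeal to the $\mathbf{IK}$ axiomatisation, just as you spell out.
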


\subsection*{The system \texorpdfstring{$\mathbf{IntK_{\Box}}$}{IntKBox} by Wolter and Zakharyaschev}
  The frames used in \cite[Section 2]{WolZak98} to interpret
  $\Int_{\Box}$
  are posets $(X, \leq)$ with an additional relation $R$ satisfying
  $R = ({\leq} \circ R \circ {\leq})$.
  (The same frames have also been used in~\cite{Yok85} and~\cite{Koj11}.)
  Formulae are interpreted as upsets
  in $(X, \leq)$ and models are obtained by adding to a frame a valuation
  that sends each proposition letter to an up-closed subset of the frame.
  The axiomatisation of $\mathbf{IntK_{\Box}}$ proves
  the axioms $\Box\phi \wedge \Box\psi = \Box(\phi \wedge \psi)$ and $\Box\top = \top$.
  It can be proved similarly as above that $\mathbf{MI_{\Box}}$ is sound
  and complete with respect to their models, and that
  $\mathbf{MI_{\Box}}$ axiomatises the $(\top, \wedge, \to, \Box)$-fragment
  of $\mathbf{IntK_{\Box}}$.

\begin{thm}
  The system $\mathbf{IntK_{\Box}}$ is a conservative extension of $\mathbf{MI_{\Box}}$.
\end{thm}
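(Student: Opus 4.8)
The plan is to follow the uniform template used for the three preceding conservative-extension theorems. Since conservativity concerns the shared language $\lan{L}_{\Box}$, it suffices to prove two inclusions between the sets of theorems on $\lan{L}_{\Box}$: (i) every $\mathbf{MI_{\Box}}$-theorem is an $\mathbf{IntK_{\Box}}$-theorem, and (ii) every $\lan{L}_{\Box}$-theorem of $\mathbf{IntK_{\Box}}$ is an $\mathbf{MI_{\Box}}$-theorem.

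For (i) I would argue purely proof-theoretically. The system $\mathbf{IntK_{\Box}}$ extends full intuitionistic propositional logic, so its $(\top, \wedge, \to)$-fragment contains $\mathbf{MI}$; and, as recorded in the subsection above, the axiomatisation of $\mathbf{IntK_{\Box}}$ proves $\Box\phi \wedge \Box\psi = \Box(\phi \wedge \psi)$ and $\Box\top = \top$, which are exactly the axioms \ref{ax:box1} and \ref{ax:box2}. Being a normal modal system over $\mathbf{IPC}$, it validates the congruence rule (cong) (via necessitation and the $\mathsf K$-axiom). Hence every axiom and every rule of $\mathbf{MI_{\Box}}$ is available in $\mathbf{IntK_{\Box}}$, and a routine induction on $\mathbf{MI_{\Box}}$-derivations yields (i).

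For (ii) I would argue contrapositively via the semantics. Suppose $\mathbf{MI_{\Box}} \not\vdash \phi$. By completeness of $\mathbf{MI_{\Box}}$ with respect to $\Box$-models there is a $\Box$-model $\mo{M} = (X, \leq, R, V)$ and a world refuting $\phi$. The point is that $\mo{M}$ is already a Wolter--Zakharyaschev model for $\Int_{\Box}$: by Proposition~\ref{prop:rels} every $\Box$-frame satisfies $R = ({\leq} \circ R \circ {\leq})$, which is precisely their frame condition; the valuation $V$ of a $\Box$-model assigns to each proposition letter a filter of $(X, \leq)$, hence in particular an upset, as they require; the clauses for $\top, \wedge, \to$ coincide because $\Box$-models are intuitionistic Kripke models; and the clause for $\Box$ in Definition~\ref{def:box-frm} is literally the standard one, $x \Vdash \Box\phi$ iff $xRy$ implies $y \Vdash \phi$, which is exactly the clause used for $\mathbf{IntK_{\Box}}$. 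So $\phi$ is refuted on an $\mathbf{IntK_{\Box}}$-model, and soundness of $\mathbf{IntK_{\Box}}$ with respect to its models gives $\mathbf{IntK_{\Box}} \not\vdash \phi$, establishing (ii).

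I do not expect a serious obstacle: unlike the Plotkin--Stirling case, the interpretation of $\Box$ in $\mathbf{IntK_{\Box}}$ is already the naive one, so there is no need to detour through ${\leq} \circ R$, and the only things to verify are the two routine matchings (frame condition, supplied by Proposition~\ref{prop:rels}, and semantic clauses) plus the axiom inclusion needed for (i). The only mild care required is to keep everything restricted to $\lan{L}_{\Box}$, so that both directions genuinely speak about the $(\top, \wedge, \to, \Box)$-fragment of $\mathbf{IntK_{\Box}}$.
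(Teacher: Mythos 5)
Your proposal is correct and matches the paper's argument: the paper's own proof (sketched uniformly for all four systems) is exactly the combination of (i) the $\mathbf{MI_{\Box}}$-axioms and rules being derivable in $\mathbf{IntK_{\Box}}$ and (ii) every $\Box$-model being a Wolter--Zakharyaschev model via Proposition~\ref{prop:rels} and the fact that filter-valuations are upset-valuations, so that completeness of $\mathbf{MI_{\Box}}$ for $\Box$-models plus soundness of $\mathbf{IntK_{\Box}}$ gives conservativity. Your contrapositive phrasing of (ii) is just the same semantic argument run in the other direction.
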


\section{Dialgebraic perspective}\label{sec:dialg}

  We have seen in Sections \ref{sec:normal} and \ref{sec:duality} that the
  categories of $\Box$-frames, descriptive $\Box$-frames and ISLOs are all
  categories of dialgebras.
  In this section we will see that in fact $\mathbf{MI_{\Box}}$ is an instance
  of a \emph{dialgebraic logic} \cite{GroPat20}.
  Using general results from {\it op.~\!cit.}~we 
  then re-obtain the completeness result for $\mathbf{MI_{\Box}}$.
  Furthermore, the dialgebraic view allows us to transfer the general
  expressivity-somewhere-else result for dialgebraic logics from
  \cite[Section 7]{GroPat20} to the setting of modal meet-implication logic:
  Two states $x$ and $x'$ in a $\Box$-frame are logically equivalent if and only if
  corresponding states $\eta(x)$ and $\eta(x')$ in the so-called
  \emph{filter extension} are behaviourally equivalent.
  
  In Section \ref{sec:monotone} we use the dialgebraic perspective 
  to succinctly present the extension of meet-implication logic with
  a monotone unary modality.
  
  We begin by recalling the framework of dialgebraic logic, specialised
  to our setup.
  In an effort to avoid notational clutter, we work in a
  setting without proposition letters.
  
\subsection*{Dialgebraic logic}
  We sketch the basic setup of dialgebraic logic \cite[Section 5]{GroPat20}
  specialised to our base logic. This starts with 
  a dual adjunction and a restriction thereof. In our case:
  $$
    \begin{tikzcd}
      \cat{MF}
            \arrow[r, {Rays[]}->, shift left=2.5pt, "\fun{F}"]
        & \cat{SL}
            \arrow[l, {Rays[]}->, shift left=2.5pt, "\fun{F}"] \\
      \cat{IF}
            \arrow[u, "\fun{i}" left]
            \arrow[r, shift left=2.5pt, "\fun{F}_1"]
        & \cat{ISL}
            \arrow[u, "\fun{j}" right]
            \arrow[l, shift left=2.5pt, "\fun{F}_2"]
    \end{tikzcd}
  $$
  Recall that $\cat{MF}$ is simply the same category of $\cat{SL}$,
  but viewed as frame semantics.
  Both units of the dual adjunction in the top row are given by
  $\eta_X : X \to \fun{F}X : x \mapsto \tilde{x} = \{ a \in \fun{F}X \mid x \in a \}$.
  The functors $\fun{F}_1$ and $\fun{F}_2$ are restrictions of $\fun{F}$
  to $\cat{IF}$ and $\cat{ISL}$ respectively.
  It follows from Corollary \ref{cor:imp-imp} that $\fun{F}_1$ and $\fun{F}_2$
  are well defined on objects.
  An easy computation shows that applying $\fun{F}$ to an I-frame morphism
  yields an $\cat{ISL}$-homomorphism, so $\fun{F}_1$ is well defined.
  For the converse, we note that $\fun{F}_2 = \fun{U}_{\cat{IF}} \circ \TFil$,
  where $\fun{U}_{\cat{IF}} : \cat{ISpace} \to \cat{IF}$ is the obvious forgetful
  functor. This implies that $\fun{F}_2$ is well defined.
  
  Furthermore, the latter proves that for every implicative semilattice $A$,
  the unit $\eta_A : A \to \fun{F}_1\fun{F}_2A$ is an implicative semilattice
  homomorphism.
  In the terminology of \cite{GroPat20} this means that the setup is
  \emph{well structured}, and this is a prerequisite for many of the results
  in {\it op.~\!cit}.

  Let $\fun{T} : \cat{IF} \to \cat{MF}$ be a functor.
  A \emph{modal logic} for $\cat{Dialg}(\fun{i}, \fun{T})$ is a pair
  $(\fun{L}, \rho)$ where $\fun{L}$ is a functor $\cat{ISL} \to \cat{SL}$
  and $\rho$ is a natural transformation $\rho : \fun{LF}_1 \to \fun{FT}$.
  In a diagram:
  $$
    \begin{tikzcd}[row sep=large, column sep=large]
      \cat{MF}
            \arrow[r, {Rays[]}->, shift left=2.5pt, "\fun{F}"]
        & \cat{SL}
            \arrow[l, {Rays[]}->, shift left=2.5pt, "\fun{F}"] \\
      \cat{IF}
            \arrow[u, "\fun{T}" left]
            \arrow[r, {Rays[]}->, shift left=2.5pt, "\fun{F}_1"]
        & \cat{ISL}
            \arrow[u, "\fun{L}" right]
            \arrow[l, {Rays[]}->, shift left=2.5pt, "\fun{F}_2"]
            \arrow[lu, Rightarrow, shorten <=1em, shorten >=1em, "\rho" {swap,pos=.4}]
    \end{tikzcd}
  $$
  The natural transformation $\rho$ gives rise to the abstract notion
  of a ``complex algebra'', which assigns to a $(\fun{i}, \fun{T})$-dialgebra $(X, \gamma)$
  the $(\fun{L}, \fun{j})$-dialgebra $(\fun{F}_1X, \gamma^*)$ given by the composition
  $$
    \begin{tikzcd}
      \fun{LF}_1X
            \arrow[r, "\rho_X"]
        & \fun{FT}X
            \arrow[r, "\fun{F}\gamma"]
        & \fun{Fi}X
            \arrow[r, "\cong"]
        & \fun{jF}_1X.
    \end{tikzcd}
  $$
  We assume the existence of an initial object $(\Psi, \ell)$ in $\cat{Dialg}(\fun{N}, \fun{j})$,
  which plays the r\^{o}le of Lindenbaum-Tarski algebra.
  (We construct an explicit example of the initial dialgebra in case $\fun{L} = \fun{N}$ below.)
  The interpretation
  of formulae in an $(\fun{i}, \fun{T})$-dialgebra $(X, \gamma)$ is then given
  by the unique morphism $\llb \cdot \rrb_{\gamma} : \Psi \to X$ in $\cat{ISL}$
  making the following diagram commute:
  $$
    \begin{tikzcd}[arrows=-latex]
      \fun{L}\Psi
            \arrow[r, "\fun{L}\llb \cdot \rrb_{\gamma}"]
            \arrow[d, "\ell" left]
        & \fun{LF}_1X
            \arrow[d, "\gamma^*"] \\
      \fun{j}\Psi
            \arrow[r, "\fun{j}\llb \cdot \rrb_{\gamma}"]
        & \fun{j}\fun{F}_1X
    \end{tikzcd}
  $$
  
  Specialising to $\mathbf{MI_{\Box}}$, we take $\fun{T} = \fun{G}$
  (cf.~Definition \ref{def:fun-G}) and $\fun{L} = \fun{N}$ (Section \ref{sec:normal}),
  and define $\rho : \fun{NF}_1 \to \fun{FG}$ on components by
  $$
    \rho_{(X, \leq)}(\Box a) \mapsto \{ p \in \fun{G}(X, \leq) \mid p \subseteq a \}.
  $$
  The initial dialgebra in $\cat{Dialg}(\fun{N}, \fun{j})$ is the
  Lindenbaum-Tarski algebra corresponding to $\mathbf{MI_{\Box}}$.
  Explicitly, write $\Psi$ for this Lindenbaum-Tarski algebra
  (i.e.~$\ms{L}_{\Box}$-formulae modulo provable equivalence) and let
  $[\phi]$ denote the equivalence class of a formula $\phi$.
  Then we can define an $(\fun{N}, \fun{j})$-dialgebra structure on $\Psi$
  by setting $\ell(\Box[\phi]) = [\Box\phi]$. For any $(\fun{N}, \fun{j})$-dialgebra
  $(A, \alpha)$ the map that sends $[\top] \in \Psi$ to $\top_A$, the top element in $A$,
  uniquely extends to an $(\fun{N}, \fun{j})$-dialgebra morphism
  $i : (\Psi, \ell) \to (A, \alpha)$ via
  \begin{align*}
    i([\phi] \wedge [\psi]) &= i([\phi]) \wedge_A i([\psi]), \\
    i([\phi] \to [\psi]) &= i([\phi]) \mto_A i([\psi]), \\
    i([\Box\phi]) &= \alpha(\Box_A(i([\phi]))).
  \end{align*}
  In case $(A, \alpha)$ is the complex dialgebra of a $(\fun{i}, \fun{G})$-dialgebra
  $(X, \gamma)$, the meet in $A$ is given by intersection, and implication by
  $$
    i([\phi]) \mto_A i([\psi])
      = \{ x \in X \mid \text{if } x \leq y
                      \text{ and } y \in i([\phi])
                     \text{ then } y \in i([\psi]) \}.
  $$
  
  By construction, for any $(\fun{i}, \fun{G})$-dialgebra $(X, \leq, \gamma)$
  the following commutes:
  $$
    \begin{tikzcd}
      \fun{NF}_1(X, \leq)
            \arrow[r, "\rho_X"]
        & \fun{F}\fun{G}(X, \leq)
            \arrow[r, "\fun{F}_2\gamma"]
        & \fun{F}\fun{i}(X, \leq)
            \arrow[r, phantom, "\cong"]
        & [-2em]
          \fun{jF}_1(X, \leq) \\
      \fun{N}\Psi
            \arrow[u, "\fun{N}\llb \cdot \rrb_{\gamma}"]
            \arrow[rrr, "\ell"]
        &
        &
        & \fun{j}\Psi
            \arrow[u, "\fun{j}\llb \cdot \rrb_{\gamma}"]
    \end{tikzcd}
  $$
  Let us compute explicitly what this means for the interpretation of
  a formula of the form $\Box\phi$.
  The interpretation $\llb \Box\phi \rrb_{\gamma}$ of $\Box\phi$ in $(X, \gamma)$
  is then given by $\fun{j}\llb \ell(\Box\phi) \rrb_{\gamma}$.
  By commutativity of the diagram this equals
  \begin{align*}
    \fun{F}_2\gamma(\rho_{(X,\leq)}(\fun{N}\llb \cdot \rrb_{\gamma}(\Box\phi)))
      &= \gamma^{-1}(\rho_X(\Box \llb \phi \rrb_{\gamma})) \\
      &= \gamma^{-1}(\{ b \in \fun{F}(X, \leq) \mid b \subseteq \llb \phi \rrb_{\gamma} \}) \\
      &= \{ x \in X \mid \gamma(x) \subseteq \llb \phi \rrb_{\gamma} \}
  \end{align*}
  Indeed, a state $x$ satisfies $\Box\phi$ iff all of its successors (all $y \in \gamma(x)$)
  satisfy $\phi$, as desired.
  
\subsection*{Logic via predicate liftings and axioms}
  A convenient way to think about dialgebraic logics is via predicate liftings
  and axioms.
  Given a collection of so-called predicate liftings and axioms
  between them, we can define a logic $(\fun{L}, \rho)$.

  A (unary) \emph{predicate lifting} is a natural transformation 
  $$
    \lambda : \fun{UFi} \to \fun{UFT},
  $$
  where $\fun{U} : \cat{SL} \to \cat{Set}$ denotes the forgetful functor.
  For a collection $\Lambda$ of predicate liftings for $\fun{T}$ and a set $X$,
  let $\fun{L}'X$ be the free semilattice generated by the set
  $$
    \big\{ \heartsuit^{\lambda}v \mid \lambda \in \Lambda, v \in \Var \big\}.
  $$
  By a \emph{$\Lambda$-axiom} we mean a pair
  $(\phi, \psi)$ of two elements in $\fun{L}'\Var$, where $\Var$ is some set of variables.

  Let $\Lambda$ be a set of predicate liftings and $\Ax$ a collection of
  $\Lambda$-axioms.
  The functor $\fun{L}$ then arises by sending an implicative meet-semilattice $A$
  to the free semilattice generated by $\{ \heartsuit^{\lambda}(a) \mid a \in A \}$
  modulo the relations $R$ from $\Ax$, where the variables are substituted for elements of $A$.
  For an $\cat{ISL}$-homomorphism $h : A \to B$, the morphism
  $\fun{L}f$ is defined on generators by
  $\fun{L}f(\heartsuit^{\lambda}a) = \heartsuit^{\lambda}f(a)$.
  The induced natural transformation $\rho$ is given on components by
  $\rho_{(X, \leq)}(\heartsuit^{\lambda}a) = \lambda_{(X, \leq)}(a)$.

  A collection $\Ax$ of axioms is said to be \emph{sound} if it induces
  a well-defined natural transformation
  $\rho_{(\Lambda, \Ax)} : \fun{LP'} \to \fun{PT}$ via
  $$
    \rho_{(\Lambda, \Ax),X}([\heartsuit^{\lambda}(a_1, \ldots, a_n)]_R)
      = \lambda_X(a_1, \ldots, a_n)
  $$
  that is independent of the choice of representative relative to
  $R$-equivalence classes $[ \cdot ]_R$.
  If this is well defined, naturality is
  an immediate consequence of naturality of the predicate liftings.
  
  In the case of $\mathbf{MI_{\Box}}$, 
  we can view the modality $\Box$ as being induced by the predicate lifting
  $\lambda : \fun{UFi} \to \fun{UFG}$ given by
  $$
    \lambda_{(X, \leq)} : \fun{UFi}(X, \leq) \to \fun{UFG}(X, \leq)
                        : p \mapsto \{ q \in \fun{G}A \mid q \subseteq p \}.
  $$
  Let us temporarily write $\heartsuit^{\lambda}$ for the modal operator
  induced by $\lambda$.
  For a $\Box$-frame $\mo{M} = (X, \leq, \gamma)$ we then have
  $$
    \mo{M}, x \Vdash \heartsuit^{\lambda}\phi
      \iff \gamma(x) \in \lambda_{(X, \leq)}(\llb \phi \rrb)
      \iff \gamma(x) \subseteq \llb \phi \rrb.
  $$
  Recognising $\gamma(x)$ as the successor-set of $x$, this shows that
  $x$ satisfies $\heartsuit^{\lambda}\phi$ if all its successors satisfy $\phi$.
  So $\heartsuit^{\lambda}$ coincides with $\Box$ as defined above.
  
  We verify that $\lambda$ is indeed a well-defined natural transformation.

\begin{prop}
  $\lambda : \fun{UFi} \to \fun{UFG}$ is a well-defined natural transformation.
\end{prop}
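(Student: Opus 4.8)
The statement has two parts --- that each component $\lambda_{(X,\leq)}$ really maps into $\fun{UFG}(X,\leq)$, and that the family $(\lambda_{(X,\leq)})_{(X,\leq)}$ is natural --- and the plan is to settle each by a short direct computation.

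For the first part, fix an I-frame $(X,\leq)$ and a filter $p$ of $(X,\leq)$, i.e.~an element of $\fun{UFi}(X,\leq)$. I must check that $\lambda_{(X,\leq)}(p) = \{ q \in \fun{G}(X,\leq) \mid q \subseteq p \}$ is a filter of the semilattice $\fun{G}(X,\leq)$, hence genuinely an element of $\fun{UFG}(X,\leq)$. Recall from Definition~\ref{def:fun-G} that $\fun{G}(X,\leq)$ is ordered by \emph{reverse} inclusion, has top element $\{\top\}$, and has meet $q \wwedge q' = \langle q, q' \rangle = \{ y \wedge z \mid y \in q,\ z \in q' \}$ (which is a filter, since implicative semilattices are distributive). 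Then $\lambda_{(X,\leq)}(p)$ is non-empty because $\{\top\} \subseteq p$; it is up-closed for the order of $\fun{G}(X,\leq)$ because $q' \subseteq q \subseteq p$ forces $q' \subseteq p$; and it is closed under $\wwedge$ because, if $q, q' \subseteq p$, then every generator $y \wedge z$ with $y \in q$ and $z \in q'$ lies in $p$, as $p$ is closed under finite meets. These are the only filter properties of $p$ used, so the verification is routine.

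For naturality, let $h \colon (X,\leq) \to (X',\leq')$ be an I-frame morphism. Since both $\fun{UFi}$ and $\fun{UFG}$ are contravariant, I must verify that $\fun{UFG}h \circ \lambda_{(X',\leq')} = \lambda_{(X,\leq)} \circ \fun{UFi}h$ as maps $\fun{UFi}(X',\leq') \to \fun{UFG}(X,\leq)$. Unwinding the functors, $\fun{UFi}h$ sends a filter $p'$ of $(X',\leq')$ to $h^{-1}(p')$, while $\fun{G}h$ sends $q \mapsto h[q]$, so $\fun{UFG}h$ sends a filter $S'$ of $\fun{G}(X',\leq')$ to $\{ q \in \fun{G}(X,\leq) \mid h[q] \in S' \}$. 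Chasing an arbitrary filter $p'$ of $(X',\leq')$ around the square: the composite $\fun{UFG}h \circ \lambda_{(X',\leq')}$ produces $\{ q \in \fun{G}(X,\leq) \mid h[q] \subseteq p' \}$, and the composite $\lambda_{(X,\leq)} \circ \fun{UFi}h$ produces $\{ q \in \fun{G}(X,\leq) \mid q \subseteq h^{-1}(p') \}$. Hence the claim reduces to the elementary equivalence
\[
  h[q] \subseteq p' \iff q \subseteq h^{-1}(p'),
\]
which holds for any function $h$ and any subsets. I do not expect a genuine obstacle; the one point requiring care is the bookkeeping around the reverse-inclusion order on $\fun{G}$ and the contravariance of $\fun{F}$, so that a filter of $\fun{G}(X,\leq)$ is consistently understood as a non-empty, $\subseteq$-downward closed, $\wwedge$-closed subset.
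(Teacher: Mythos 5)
Your proof is correct and follows essentially the same route as the paper's: check that each $\lambda_{(X,\leq)}(p)$ is a filter of $\fun{G}(X,\leq)$ (non-empty, up-closed for reverse inclusion, closed under $\wwedge$), and reduce naturality to the equivalence $h[q] \subseteq p' \iff q \subseteq h^{-1}(p')$. The only difference is that the paper additionally verifies that each component preserves the top element and binary intersections of filters; this is not required by the statement itself (the components are mere functions, since $\fun{UFi}$ and $\fun{UFG}$ land in $\cat{Set}$), but it is what later guarantees soundness of the axioms $\Box(p\wedge q)\leftrightarrow\Box p\wedge\Box q$ and $\Box\top\leftrightarrow\top$ used to recover the functor $\fun{N}$.
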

\begin{proof}
  Let $(X, \leq)$ be an I-frame and $p \in \fun{UFi}(X, \leq)$.
  Since $\fun{G}(X, \leq)$ is ordered by reverse inclusion
  the set $\lambda_{(X, \leq)}(p)$ is up-closed in $\fun{G}(X, \leq)$.
  Moreover, if $q, r \in \lambda_{(X, \leq)}(p)$, then $q \cup r \subseteq p$.
  This implies $q \wedge_{\fun{G}} r \subseteq p$,
  so $\lambda_{(X, \leq)}(p)$ is closed under meets in $\fun{G}(X, \leq)$,
  and hence $\lambda_{(X, \leq)}(p)$ is a filter in $\fun{G}(X, \leq)$.
  Next we show that $\lambda_{(X, \leq)}$ is a homomorphism in $\cat{SL}$.
  Clearly $\lambda_{(X, \leq)}(X) = \fun{G}(X, \leq)$, so top is preserved.
  Let $p, q$ be filters in $(X, \leq)$, then
  $$
    \lambda_{(X, \leq)}(p \cap q)
      = \{ r \mid r \subseteq p \cap q \}
      = \{ r \mid r \subseteq p \} \cap \{ r \mid r \subseteq q \}
      = \lambda_{(X, \leq)}(p) \cap \lambda_{(X, \leq)}(q).
  $$
  Finally, for naturality, let $h : (X, \leq) \to (X', \leq')$ be an
  I-frame morphism and let $p'$ be a filter in $(X', \leq')$ and
  $q \in \fun{G}(X, \leq)$. Then we have
  \begin{align*}
    q \in \lambda_{(X, \leq)}(h^{-1}(p'))
      &\iff q \subseteq h^{-1}(p') \\
      &\iff h[q] \subseteq p' \\
      &\iff h[q] \in \lambda_{(X', \leq')}(p') \\
      &\iff q \in (\fun{G}h)^{-1}(\lambda_{(X', \leq')}(p'))
  \end{align*}
  so indeed $\lambda$ is a natural transformation.
\end{proof}

  Together with the axioms \ref{ax:box1} and \ref{ax:box2}
  we recover the functor $\fun{N} : \cat{ISL} \to \cat{SL}$.

\subsection*{Instatiating the theory of Dialgebraic Logic}
  Now that we have set up the logic $\mathbf{MI_{\Box}}$ in a dialgebraic setting,
  we can exploit this by using theorems from \cite{GroPat20}
  to obtain results for $\mathbf{MI_{\Box}}$ interpreted in $\Box$-frames.
  
  We start by re-obtaining completeness.
  By Propositions 6.9 and 6.10 in {\it op.~\!cit.}~it suffices
  to prove that the adjoint buddy $\rho^{\flat} : \fun{GF}_2 \to \fun{FN}$
  of $\rho$ has a right-inverse on components. This
  ``adjoint buddy'' is defined by the composition
  $$
    \begin{tikzcd}
      \fun{GF}_2
            \arrow[r, "\eta_{\fun{GF}_2}"]
        & \fun{FFGF}_2
            \arrow[r, "\fun{F}\rho_{\fun{F}_2}"]
        & \fun{FNF}_1\fun{F}_2
            \arrow[r, "\fun{FN}\eta'"]
        & \fun{FN}
    \end{tikzcd}
  $$
  and exists because our setup is well-structured.
  We shall establish a stronger statement: we can find
  a \emph{natural} transformation $\tau : \fun{FN} \to \fun{GF}_2$ such that
  $\rho^{\flat} \circ \tau = \id$.
  Guided by the duality for for $(\fun{N}, \fun{j})$-dialgebras from
  Section~\ref{sec:duality}, for $A \in \cat{ISL}$ define
  $$
    \tau_A : \fun{FN}A \to \fun{GF}_2A
           : U \mapsto \{ p \in \fun{F}_2A \mid \text{if } \Box a \in U \text{ then } a \in p \}.
  $$
  This is easily seen to send $U$ to 
  the upwards closure of the filter $p_U = \{ a \in A \mid {\Box} a \in U \} \in \fun{F}_2A$.
  Therefore $\tau_A$ is well defined, i.e.~it lands in $\fun{GF}_2A$.
  It clearly preserves the top element, and it preserves binary meets because
  $$
    \tau_A(U) \wwedge \tau_A(V)
      = {\uparrow}_{\subseteq}p_U \wwedge {\uparrow}_{\subseteq}p_V
      = {\uparrow}_{\subseteq}(p_U \cap p_V)
      = {\uparrow}_{\subseteq}(p_{U \cap V})
      = \tau_A(U \cap V).
  $$
  
\begin{prop}\label{prop:i-G-tau}
  The assignment $\tau = (\tau_A)_{A \in \cat{ISL}} : \fun{FN} \to \fun{GF}_2$
  defines a natural transformation and satisfies $\rho^{\flat} \circ \tau = \id_{\fun{FN}}$.
\end{prop}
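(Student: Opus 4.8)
The statement packages two claims: that $\tau$ is natural --- we already know each $\tau_A$ is a morphism in $\cat{SL}$ --- and that $\rho^{\flat} \circ \tau = \id_{\fun{FN}}$. I would handle these in turn. Throughout I would use three facts: the concrete description $\tau_A(U) = {\uparrow}_{\subseteq}p_U$ with $p_U = \{ a \in A \mid \Box a \in U \}$; that every element of $\fun{N}A$ has the form $\Box a$ (from $\fun{N} \cong \fun{j}$), so a filter $U$ of $\fun{N}A$ is completely determined by $p_U$; and that for an implicative semilattice morphism $h : A \to A'$ the map $\fun{F}_2 h = h^{-1} : \fun{F}_2 A' \to \fun{F}_2 A$ is a \emph{bounded} I-frame morphism (combine $\fun{F}_2 = \fun{U}_{\cat{IF}} \circ \TFil$ with the boundedness claim proved inside Theorem~\ref{thm:IS-ISL}).

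For naturality I would fix $h : A \to A'$ and $U' \in \fun{FN}A'$ and compute both legs of the square. Since $\fun{FN}h(U') = (\fun{N}h)^{-1}(U')$ and $\Box a \in (\fun{N}h)^{-1}(U')$ iff $\Box h(a) \in U'$ iff $h(a) \in p_{U'}$, we get $p_{\fun{FN}h(U')} = h^{-1}(p_{U'})$, hence $\tau_A(\fun{FN}h(U')) = {\uparrow}_{\subseteq}h^{-1}(p_{U'})$. On the other side $\fun{GF}_2 h$ acts by direct image along $h^{-1}$, so $\fun{GF}_2 h(\tau_{A'}(U')) = \{ h^{-1}(p') \mid p' \in \fun{F}_2 A',\, p_{U'} \subseteq p' \}$. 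These two filters of $\fun{F}_2 A$ coincide: the inclusion from right to left is immediate because $p_{U'} \subseteq p'$ forces $h^{-1}(p_{U'}) \subseteq h^{-1}(p')$; the inclusion from left to right is exactly boundedness of $h^{-1}$, which for any filter $q$ with $h^{-1}(p_{U'}) \subseteq q$ supplies a filter $p'$ with $p_{U'} \subseteq p'$ and $h^{-1}(p') = q$.

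For the right-inverse property the genuine task is to unwind the adjoint buddy $\rho^{\flat}_A = \fun{FN}\eta'_A \circ \fun{F}\rho_{\fun{F}_2 A} \circ \eta_{\fun{GF}_2 A}$ into a usable form. I would chase an element $\mo{S} \in \fun{GF}_2 A$ through it, using $\eta'_A(a) = \tilde{a} = \{ p \in \fun{F}_2 A \mid a \in p \}$ and $\rho_{(X,\leq)}(\Box b) = \{ q \in \fun{G}(X,\leq) \mid q \subseteq b \}$ instantiated at $(X,\leq) = \fun{F}_2 A$. The outcome I expect is that for every $a \in A$, $\Box a \in \rho^{\flat}_A(\mo{S})$ iff $\mo{S} \in \rho_{\fun{F}_2 A}(\Box\tilde{a})$ iff $\mo{S} \subseteq \tilde{a}$ iff $a \in \bigcap_{p \in \mo{S}} p$; that is, $\rho^{\flat}_A(\mo{S})$ is the filter of $\fun{N}A$ determined by the filter $\bigcap_{p \in \mo{S}} p$ of $A$.

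With this description the conclusion is a one-liner: for $U \in \fun{FN}A$ the filter $\tau_A(U) = {\uparrow}_{\subseteq}p_U$ has $p_U$ as its least element with respect to inclusion, so $\bigcap_{p \in \tau_A(U)} p = p_U$, and therefore $\rho^{\flat}_A(\tau_A(U))$ is the filter of $\fun{N}A$ determined by $p_U$, namely $U$ itself. Hence $\rho^{\flat} \circ \tau = \id_{\fun{FN}}$. The hard part, as anticipated, is that unwinding of $\rho^{\flat}$: one must keep careful track of the three nested applications of $\fun{F}$ and of which layers carry the inclusion order and which the reverse-inclusion order. Once the compact formula for $\rho^{\flat}_A$ is in hand, the right-inverse identity falls out with no further computation.
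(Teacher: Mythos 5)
Your proposal is correct and takes essentially the same route as the paper: naturality comes down to the identity $p_{\fun{FN}h(U)} = h^{-1}(p_U)$ combined with boundedness of $\fun{F}_2h$, and the right-inverse property follows from the characterisation $\Box a \in \rho^{\flat}_A(\mo{S})$ iff $\mo{S} \subseteq \tilde{a}$ together with the fact that a filter of $\fun{N}A$ is determined by the elements $\Box a$ it contains, with $p_U$ as the least element of $\tau_A(U)$. Your explicit unwinding of $\rho^{\flat}_A$ is exactly the step the paper dismisses as ``unravelling the definition'', so the two arguments coincide in substance.
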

\begin{proof}
  We first prove naturality. This means that we have to show that
  $$
    \begin{tikzcd}
      \fun{FN}A
            \arrow[r, "\tau_A"]
        & \fun{GF}_2A \\
      \fun{FN}B
            \arrow[r, "\tau_B"]
            \arrow[u, "\fun{FN}h" left]
        & \fun{GF}_2B
            \arrow[u, "\fun{GF}_2h" right]
    \end{tikzcd}
  $$
  commutes for any implicative meet-semilattice homomorphism $h : A \to B$.
  Suppose $U \in \fun{FN}B$ and $p \in \tau_A(\fun{FN}h(U))$.
  In order to show that $p \in \fun{GF}_2h(\tau_B(U))$ we need to find
  $q \in \tau_B(U)$ such that $\fun{F}_2h(q) = p$. Since $\fun{F}_2h$ is
  a bounded morphism, it suffices to prove that there exists
  $q \in \tau_B(U)$ such that $\fun{F}_2h(q) \subseteq p$.
  Consider $q_U = \{ b \in B \mid \Box b \in U \}$.
  By construction $q_U \in \tau_B(U)$.
  Furthermore, $a \in \fun{F}_2h(q_U)$ implies $h(a) \in q_U$,
  hence $\Box h(a) \in U$ so $a \in \fun{FN}h(U)$ and since
  $p \in \tau_A(\fun{FN}h(U))$ this entails that $a \in p$.
  So $\fun{F}_2h(q_U) \subseteq p$, as desired.
  
  Conversely, suppose $p \in \fun{GF}_2h(\tau_B(U))$.
  Then $p = \fun{F}_2h(q)$ for some $q \in \tau_B(U)$.
  Also, $a \in \fun{FN}h(U)$ implies $h(a) \in U$, hence $h(a) \in q$ which
  in turn implies $a \in p$. Therefore $p \in \tau_A(\fun{FN}h(U))$, as desired.
  This proves commutativity of the diagram, and hence naturality of $\tau$.
  
  In order to prove that $\rho^{\flat}_A \circ \tau_A = \id$ we first note
  that for any $U \in \fun{F}NA$ and $a \in A$ we have
  $\tau(U) \subseteq \tilde{a}$ if and only if $\Box a \in U$.
  The direction from right to left follows from the definition of $\tau$.
  For the converse direction suppose $\Box a \notin U$,
  then $p_U$ is such that $p_U \in \tau(U)$ while $p_U \notin \tilde{a}$,
  so that $\tau(U) \not\subseteq \tilde{a}$.
  
  Now to see that $\rho^{\flat}_A \circ \tau_A = \id$, note that elements of
  $\fun{FN}A$ are filters of $\fun{N}A$, and these are uniquely determined
  by the collection of elements of the form $\Box a$ they contain, where $a \in A$.
  We now have
  $$
    \Box a \in \rho^{\flat}(\tau(U))
      \iff \tau(U) \subseteq \tilde{a}
      \iff \Box a \in U.
  $$
  The first ``iff'' follows from unravelling the definition of $\rho^{\flat}$.
\end{proof}

  In \cite[Definition 6.1]{GroPat20}, a logic $(\fun{L}, \rho)$ for $(\fun{i}, \fun{T})$-dialgebras is called
  \emph{complete} if the source
  \begin{equation}\label{eq:source}
    \big\{ \llb \cdot \rrb_{\gamma} : \Psi \to \fun{F}_1(X, \leq) \big\}_{(X, \leq, \gamma) \in \cat{Dialg}(\fun{i}, \fun{T})}
  \end{equation}
  is jointly monic. Intuitively, this means that for every two non-equivalent
  formulae $\phi, \psi \in \Psi$ we can find a $(\fun{i}, \fun{T})$-dialgebra
  $(X, \leq, \gamma)$ in which $\llb \phi \rrb_{\gamma} \neq \llb \psi \rrb_{\gamma}$.
  In our setup this reduces to the usual notion of (weak) completeness.
  As a corollary of Propositions 6.9 and 6.10 in \cite{GroPat20}
  we get:
  
\begin{thm}[Completeness]
  The system $\mathbf{MI_{\Box}}$ is complete with respect to
  $(\fun{i}, \fun{G})$-dialgebras.
\end{thm}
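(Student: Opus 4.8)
The plan is to obtain this as a direct instance of the general completeness criterion for dialgebraic logics. Recall from \cite[Definition~6.1]{GroPat20} that the logic $(\fun{N}, \rho)$ for $(\fun{i}, \fun{G})$-dialgebras is \emph{complete} exactly when the source $\{ \llb \cdot \rrb_{\gamma} : \Psi \to \fun{F}_1(X, \leq) \}_{(X, \leq, \gamma)}$ of interpretation maps is jointly monic, and, as explained above, in our setting this coincides with weak completeness of $\mathbf{MI_{\Box}}$ with respect to $(\fun{i}, \fun{G})$-dialgebras. So it suffices to verify that the hypotheses of \cite[Propositions~6.9 and 6.10]{GroPat20} are met for our data.

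First I would confirm that the material assembled in this section really is a dialgebraic logic in the sense of \cite{GroPat20}: the dual adjunction between $\cat{MF}$ and $\cat{SL}$ restricts along $\fun{i}$ and $\fun{j}$ to $\fun{F}_1$ and $\fun{F}_2$; this restriction is \emph{well-structured}, because every unit $\eta_A : A \to \fun{F}_1\fun{F}_2 A$ is an implicative semilattice homomorphism; the modal logic is the pair $(\fun{N}, \rho)$ with $\rho : \fun{N}\fun{F}_1 \to \fun{F}\fun{G}$ as defined above; and $\cat{Dialg}(\fun{N}, \fun{j})$ has an initial object, namely the Lindenbaum--Tarski algebra $(\Psi, \ell)$ of $\mathbf{MI_{\Box}}$. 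All of this has been set up in the preceding paragraphs, the one genuinely nontrivial input being that initiality of $(\Psi, \ell)$ among $(\fun{N}, \fun{j})$-dialgebras is precisely soundness and completeness of the Hilbert system $\mathbf{MI_{\Box}}$ for the corresponding equational theory, which is where the axioms \ref{ax:box1}, \ref{ax:box2} and the rule (cong) enter.

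With the framework in place, \cite[Propositions~6.9 and 6.10]{GroPat20} reduce completeness to a single condition: the adjoint transpose $\rho^{\flat} : \fun{GF}_2 \to \fun{FN}$ of $\rho$ must admit a componentwise right-inverse. But this is exactly what Proposition~\ref{prop:i-G-tau} supplies: the natural transformation $\tau = (\tau_A)_{A \in \cat{ISL}} : \fun{FN} \to \fun{GF}_2$ constructed there satisfies $\rho^{\flat} \circ \tau = \id_{\fun{FN}}$, so each component $\rho^{\flat}_A$ is split epic with section $\tau_A$. Feeding this into the cited propositions yields joint monicity of the source in \eqref{eq:source}, i.e.~completeness of $\mathbf{MI_{\Box}}$ with respect to $(\fun{i}, \fun{G})$-dialgebras. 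The main obstacle has therefore already been cleared in Proposition~\ref{prop:i-G-tau} — guessing the section $\tau_A$ (read off from the duality for $(\fun{N}, \fun{j})$-dialgebras) and checking $\rho^{\flat} \circ \tau = \id$; what remains for this theorem is only the bookkeeping needed to see that the abstract completeness theorems of \cite{GroPat20} apply verbatim to the present instance.
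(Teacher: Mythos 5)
Your proposal is correct and follows the paper's own route exactly: the theorem is obtained as a corollary of Propositions~6.9 and 6.10 of \cite{GroPat20}, with the only substantive input being the section $\tau$ of $\rho^{\flat}$ established in Proposition~\ref{prop:i-G-tau}, which is precisely how you argue. No discrepancies to report.
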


  Concretely, $\tau$ allows us to transform a $(\fun{N}, \fun{j})$-dialgebra $(A, \alpha)$
  into a $(\fun{i}, \fun{G})$-dialgebra whose complex algebra has $(A, \alpha)$
  as a subalgebra. Unravelling the constructions
  shows that this is precisely the $(\fun{i}, \fun{G})$-dialgebra
  underlying the dual of descriptive $\Box$-frame of $(A, \alpha)$.

  Letting $(X, \leq, \gamma, \Psi)$ be the dual dual of $(\Psi, \ell)$
  then gives rise to a $\fun{G}$-coalgebra $(X, \leq, \gamma)$
  such that $(\Psi, \ell)$ is a subalgebra of the complex algebra
  $(\fun{F}_1(X, \leq), \gamma^*)$ of $(X, \leq, \gamma)$.
  In particular this implies the existence of a monomorphism $m : \Psi \to \fun{F}_1X$
  such that
  $$
    \begin{tikzcd}
      \fun{N}\Psi
            \arrow[d, "\ell" swap]
            \arrow[r, "\fun{N}m"]
        & \fun{NF}_1(X, \leq)
            \arrow[d, "\gamma^*"] \\
      \fun{j}\Psi
            \arrow[r, "\fun{j}m"]
        & \fun{jF}_1(X, \leq)
    \end{tikzcd}
  $$
  commutes.
  Initiality of $(\Psi, \ell)$ then implies that $m = \llb \cdot \rrb_{\gamma} : \Psi \to \fun{F}_1(X, \leq)$, and its monotonicity witnesses that (joint) monotonicity
  of the source in \eqref{eq:source}.

  Apart from completeness, the natural transformation $\tau$ entails results for
  expressivity and filter extensions. We call a $\Box$-frame \emph{expressive}
  if two states are logically equivalent if and only if they are behaviourally
  equivalent.
  If we translate this to the abstract dialgebraic setting, we get that an
  $(\fun{i}, \fun{T})$-dialgebra $(X, \leq, \gamma)$ is
  \emph{expressive} if the theory map $\oth_{\gamma}$ given by
  $$
    \begin{tikzcd}
      \fun{i}(X, \leq)
            \arrow[r, "\eta_{\fun{i}(X,\leq)}"]
        & \fun{FFi}(X, \leq) = \fun{iF}_2\fun{F}_1(X, \leq)
            \arrow[r, "\fun{iF}_2\llb \cdot \rrb_{\gamma}"]
        & [1em]
          \fun{iF}_2\Psi
    \end{tikzcd}
  $$
  factors through a $(\fun{i}, \fun{T})$-dialgebra morphism followed
  by a monomorphism in $\cat{IF}$.
  Intuitively, the theory map sends a state in a $(\fun{i}, \fun{T})$-dialgebra
  to the collection of formulae it satisfies.
  Monotonicity of the second part of the factorisation implies that
  logically equivalent states must already be identified by the dialgebra
  morphism.
  
  Such a strong property is not always attainable.
  A weaker condition is expressivity-somewhere-else:
  A $\Box$-frame $(X, \leq, \gamma)$ is said to be expressive-somewhere-else if there
  exists an expressive $\Box$-frame $(Y, \leq_Y, \delta)$ and a monotone
  truth-preserving map $f : X \to Y$.
  In that case, two states $x, x' \in X$ are logically equivalent if and only if
  $f(x)$ and $f(x')$ are behaviourally equivalent (in $(Y, \leq_Y, \delta)$).
  In general, a $(\fun{i}, \fun{T})$-dialgebra $(X, \leq, \gamma)$ is said to be
  \emph{expressive-somewhere-else} if the theory map $\oth_{\gamma}$
  factors through a morphism in $\cat{MF}$ followed by the theory
  map of an expressive $(\fun{i}, \fun{T})$-dialgebra.

  As a consequence of Proposition \ref{prop:i-G-tau} and
  \cite[Theorem 7.5]{GroPat20} we have:
  
\begin{thm}
  Every descriptive $\Box$-frame is expressive, and every
  $(\fun{i}, \fun{G})$-dialgebra is expressive-somewhere-else.
\end{thm}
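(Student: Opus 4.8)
The plan is to derive this as a direct instantiation of \cite[Theorem 7.5]{GroPat20}. That theorem applies to a well-structured dialgebraic logic $(\fun{L}, \rho)$ whose adjoint buddy $\rho^{\flat}$ admits a natural right inverse, and concludes that (a) every dialgebra is expressive-somewhere-else, via the unit of the dual adjunction into its filter extension, and (b) the ``descriptive'' dialgebras --- those dual to algebras --- are themselves expressive. So the first step is to observe that the setup for $\mathbf{MI_{\Box}}$ meets this hypothesis: it is well-structured, as noted at the start of Section \ref{sec:dialg}, and Proposition \ref{prop:i-G-tau} supplies exactly the required natural transformation $\tau : \fun{FN} \to \fun{GF}_2$ with $\rho^{\flat} \circ \tau = \id_{\fun{FN}}$.

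The second step is to unwind the conclusion of \cite[Theorem 7.5]{GroPat20} in the present vocabulary. Part (a) gives, for every $(\fun{i},\fun{G})$-dialgebra $(X, \leq, \gamma)$, a filter-extension dialgebra on $\fun{i}\fun{F}_2\fun{F}_1(X,\leq)$ (built using $\tau$ to push the complex-algebra structure back across the adjunction) together with the monotone, truth-preserving unit $\eta_{\fun{i}(X,\leq)}$ into it; since that filter extension is expressive, this exhibits $(X,\leq,\gamma)$ as expressive-somewhere-else, which --- transported along the isomorphism $\cat{IF_{\Box}} \cong \cat{Dialg}(\fun{i},\fun{G})$ of Theorem \ref{thm:iG-dialg} --- is the second assertion. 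For the first assertion, part (b) says that any dialgebra isomorphic to the dual of an algebra is expressive; by the duality of Section \ref{sec:duality} the $(\fun{i},\fun{G})$-dialgebras underlying descriptive $\Box$-frames are precisely these, and in fact the remark following the Completeness theorem above already identifies the filter extension of such a dialgebra with (the dialgebra part of) the descriptive $\Box$-frame one started from. Hence every descriptive $\Box$-frame is expressive.

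The only real content beyond citing op.~cit.~is the bookkeeping in this translation: one must check that ``logically equivalent'', ``behaviourally equivalent'', ``expressive'' and ``expressive-somewhere-else'' for $\Box$-frames, as defined just above, coincide with the abstract dialgebraic notions of \cite{GroPat20} under $\cat{IF_{\Box}} \cong \cat{Dialg}(\fun{i},\fun{G})$, and that ``descriptive $\Box$-frame'' matches what op.~cit.~calls a descriptive dialgebra under the duality of Section \ref{sec:duality}. I expect no difficulty here, since these dictionaries were already set up when $\mathbf{MI_{\Box}}$ was recast as a dialgebraic logic; so this is the main --- and essentially only --- step that needs care, and it is routine rather than hard.
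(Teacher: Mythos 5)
Your proposal is correct and takes essentially the same route as the paper: the theorem is obtained there precisely as an instance of \cite[Theorem 7.5]{GroPat20}, with the well-structured setup and Proposition \ref{prop:i-G-tau} (the natural right inverse $\tau$ of $\rho^{\flat}$) supplying the hypotheses, and the translation to $\Box$-frame terminology handled by the dictionaries of Sections \ref{sec:normal} and \ref{sec:duality}.
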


  In classical modal logic, the r\^{o}le of $(Y, \leq_Y, \delta)$ is played
  by \emph{ultrafilter extensions},
  and in the context of intuitionistic logic by \emph{prime filter extensions}
  \cite{Gol05}, \cite[Section 7]{GroPat20}.
  In the setting of the current paper, the r\^{o}le of $(Y, \leq_Y, \delta)$ is
  fulfilled by the \emph{filter extension} of $(X, \leq, \gamma)$.
  Chasing the definitions from \cite{GroPat20},
  we find that the filter extension of a $(\fun{i}, \fun{G})$-dialgebra 
  $(X, \leq, \gamma)$ is the $(\fun{i}, \fun{G})$-dialgebra underlying the
  descriptive $\Box$-frame dual to the complex algebra $(\fun{F}_1(X, \leq), \gamma^*)$.
  Concretely, the filter extension of $(X, \leq, \gamma)$ has
  state-space $(\hat{X}, \subseteq) = \fun{F}_2\fun{F}_1(X, \leq)$ and its
  structure map given by
  $$
    \hat{\gamma}
      : \fun{i}(\hat{X}, \subseteq) \to \fun{G}(\hat{X}, \subseteq)
      : p \mapsto \bigcap \{ \tilde{a} \subseteq \hat{X} \mid \gamma^{-1}(\dbox a) \in p \}.
  $$
  Here $\dbox a = \{ b \in \fun{F}(X, \leq) \mid b \subseteq a \}$,
  so that $\gamma^{-1}(\dbox a) \in \fun{F}(X, \leq)$.
  
  As a consequence of the results in \cite[Section 7]{GroPat20} the
  $(\fun{i}, \fun{G})$-dialgebra $(\hat{X}, \subseteq, \hat{\gamma})$ is
  expressive, and the theory map $\oth_{\gamma}$ of $(X, \gamma)$ of factors via
  $$
    \begin{tikzcd}
      \fun{i}(X, \leq)
            \arrow[rr, bend left=20, "\oth_{\gamma}"]
            \arrow[r, "\eta_{\fun{i}(X, \leq)}" below]
        & [1em]
          \fun{i}(\hat{X}, \subseteq)
            \arrow[r, "\oth_{\hat{\gamma}}" below]
        & \fun{iF}_2\Psi
    \end{tikzcd}
  $$
  where $\eta$ is the unit $\id_{\cat{MF}} \to \fun{FF}$.
  Thus two states $x, x' \in X$ are logically equivalent if and only if
  $\eta_X(x)$ and $\eta_X(x')$ are behaviourally equivalent when
  conceived of as states in $(\hat{X}, \subseteq, \hat{\gamma})$.

\section{A monotone modality}\label{sec:monotone}

  In this section we discuss the enrichment of $\mathbf{MI}$ with a
  monotone modal operator, that we denote by $\mon$.
  We present this dialgebraically, and leave explicit descriptions of
  the logical axioms, frame semantics (a variation of neighbourhood semantics)
  and descriptive frames to the reader.
  
  The semantics of this logic are given by a simple adaptation of
  monotone neighbourhood frames, see e.g.~\cite{Che80,Han03}.

\begin{defi}
  Let $(X, \leq)$ be an I-frame. Define
  $\fun{H}(X, \leq) = \{ W \subseteq \fun{F}(X, \leq) \mid
  \text{ if } a \in W \text{ and } a \subseteq b \text{ then } b \in W \}$.
  If we order $\fun{H}(X, \leq)$ by inclusion then it forms a semilattice
  with intersection as meet and the whole of $\fun{F}(X, \leq)$ as top element.
  For an I-frame morphism $h : (X, \leq) \to (X', \leq')$,
  define $\fun{H}h : \fun{H}(X, \leq) \to \fun{H}(X', \leq')$ by
  $\fun{H}h(W) = \{ a' \in \fun{F}(X', \leq') \mid h^{-1}(a') \in W \}$.
\end{defi}

  Intuitively, $(\fun{i}, \fun{H})$-dialgebras are I-frames with an additional
  map that assigns to each world $x$ an
  up-closed collection of neighbourhoods, and the neighbourhoods are
  filters of $(X, \leq)$.
  We now define our intended logic as a dialgebraic logic for
  $(\fun{i}, \fun{H})$-dialgebras.

\begin{defi}
  The predicate lifting $\lambda^{\mon} : \fun{UFi} \to \fun{UFH}$
  is given on components by
  $$
    \lambda^{\mon}_{(X, \leq)}(a) = \{ W \in \fun{H}(X, \leq) \mid a \in W \}.
  $$
  We abbreviate $\mon a = \heartsuit^{\lambda^{\mon}}a$, and work
  with a single axiom: $\mon (a \wedge b) \leq \mon a$.
\end{defi}

  It is obvious that $\lambda^{\mon}_{(X, \leq)}(a)$ is a filter for all I-frames
  $(X, \leq)$ and filters $a$. (It is nonempty because the collection of all
  filters is in it, it is up-closed because $a \in W \subseteq W'$ implies
  $a \in W'$, and it is closed under meets because meets are given by
  intersection and $a \in W$ and $a \in W'$ implies $a \in W \cap W'$.)

\begin{prop}
  The assignment $\lambda^{\mon}$ is a natural transformation.
\end{prop}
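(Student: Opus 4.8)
The components $\lambda^{\mon}_{(X,\leq)}$ have already been checked to be well defined (each $\lambda^{\mon}_{(X,\leq)}(a)$ is a filter of $\fun{H}(X,\leq)$), so the only thing left is naturality: for every I-frame morphism $h : (X,\leq) \to (X',\leq')$ I must verify that the square
$$
\begin{tikzcd}
  \fun{UFi}(X',\leq') \arrow[r, "\lambda^{\mon}_{(X',\leq')}"] \arrow[d, "\fun{UFi}h"]
    & \fun{UFH}(X',\leq') \arrow[d, "\fun{UFH}h"] \\
  \fun{UFi}(X,\leq) \arrow[r, "\lambda^{\mon}_{(X,\leq)}"]
    & \fun{UFH}(X,\leq)
\end{tikzcd}
$$
commutes, the vertical arrows pointing downward because $\fun{F}$, and hence $\fun{UFi}$ and $\fun{UFH}$, is contravariant. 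The plan is to unwind the two vertical maps and then read off commutativity from a chain of equivalences, exactly as in the naturality argument for the predicate lifting inducing $\Box$.

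First I would record what the verticals do. Since $\fun{F}$ acts on morphisms by preimage, $\fun{UFi}h$ sends a filter $a'$ of $(X',\leq')$ to $h^{-1}(a')$, which is again a filter of $(X,\leq)$ because $h$ is monotone and preserves finite meets; this is just functoriality of $\fun{F}$ and $\fun{i}$. Likewise $\fun{UFH}h = (\fun{H}h)^{-1}$, and by definition $\fun{H}h(W) = \{\, b' \in \fun{F}(X',\leq') \mid h^{-1}(b') \in W \,\}$, so for any $W \in \fun{H}(X,\leq)$ one has $a' \in \fun{H}h(W)$ precisely when $h^{-1}(a') \in W$. With this in hand the verification is immediate: for $a' \in \fun{F}(X',\leq')$ and $W \in \fun{H}(X,\leq)$,
\begin{align*}
  W \in \lambda^{\mon}_{(X,\leq)}\bigl(h^{-1}(a')\bigr)
    &\iff h^{-1}(a') \in W
     \iff a' \in \fun{H}h(W) \\
    &\iff \fun{H}h(W) \in \lambda^{\mon}_{(X',\leq')}(a')
     \iff W \in (\fun{H}h)^{-1}\bigl(\lambda^{\mon}_{(X',\leq')}(a')\bigr),
\end{align*}
where the outer equivalences are the definition of $\lambda^{\mon}$ on the two frames and the middle one is the observation just made about $\fun{H}h$. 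Hence $\lambda^{\mon}_{(X,\leq)} \circ \fun{UFi}h = \fun{UFH}h \circ \lambda^{\mon}_{(X',\leq')}$, which is the required commutativity.

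There is no real obstacle here; the argument is purely formal once the action of $\fun{H}$ on morphisms is spelled out. The only points that deserve a moment's care are bookkeeping ones: that $h^{-1}(a')$ is genuinely a filter (so that it is a legitimate argument for $\lambda^{\mon}_{(X,\leq)}$), and that the reverse-inclusion order on $\fun{G}$ and the inclusion order on $\fun{H}$ play no role in this particular computation beyond what is already encoded in the definition of $\fun{H}h$. If one wants to be thorough, one can first note that $\fun{H}$ is a functor, so that the naturality square even makes sense; I would state this without proof, consistently with the paper's choice to leave the explicit description of the monotone setting to the reader.
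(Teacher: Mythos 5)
Your proposal is correct and follows essentially the same route as the paper: you unwind the contravariant actions of $\fun{F}$ and $\fun{H}$ on morphisms and establish commutativity of the naturality square by the same chain of equivalences ($W \in \lambda^{\mon}(h^{-1}(a'))$ iff $h^{-1}(a') \in W$ iff $a' \in \fun{H}h(W)$ iff $\fun{H}h(W) \in \lambda^{\mon}(a')$), which is exactly the paper's computation read in the opposite direction. The preliminary remarks on well-definedness and on the action of $\fun{H}h$ match the paper's surrounding text, so nothing is missing.
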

\begin{proof}
  Let $f : (X, \leq) \to (X', \leq')$ be an I-frame morphism.
  We need to show that
  $$
    \begin{tikzcd}
      \fun{UFi}(X, \leq)
            \arrow[r, "\lambda^{\mon}_{(X, \leq)}"]
        & [1em] \fun{UFH}(X, \leq) \\ [1em]
      \fun{UFi}(X', \leq')
            \arrow[r, "\lambda^{\mon}_{(X', \leq')}"]
            \arrow[u, "\fun{UFi}f"]
        & \fun{UFH}(X', \leq')
            \arrow[u, "\fun{UFH}f" swap]
    \end{tikzcd}
  $$
  commutes.
  To see this, let $p'$ be a filter of $(X', \leq')$ and $W \in \fun{H}(X, \leq)$
  and compute
  \begin{align*}
    W \in \fun{UFH}f(\lambda^{\mon}_{(X', \leq')}(p'))
      &\iff \fun{H}f(W) \in \lambda^{\mon}_{(X', \leq')}(p') \\
      &\iff p' \in \fun{H}f(W) \\
      &\iff f^{-1}(p') \in W \\
      &\iff W \in \lambda^{\mon}_{(X, \leq)}(\fun{UFi}f(p'))
  \end{align*}
  This proves that the diagram commutes.
\end{proof}

  The dialgebraic logic given by this predicate lifting and axiom
  is the pair $(\fun{M}, \rho)$.
  The functor $\fun{M} : \cat{ISL} \to \cat{SL}$ is given on objects
  by sending an implicative meet-semilattice $A$ to the free meet-semilattice
  generated by $\{ \mon a \mid a \in A \}$ modulo $\mon(a \wedge b) \leq \mon a$.
  The action of $\fun{M}$ on a morphism $h : A \to B$ is defined on generators
  by $\fun{M}h(\mon a) = \mon h(a)$.
  The natural transformation $\rho$ is given by
  $$
    \rho_{(X, \leq)}
      : \fun{MF}_1(X, \leq) \to \fun{FH}(X, \leq)
      : \mon a \mapsto \{ W \in \fun{H}(X, \leq) \mid a \in W \}.
  $$

\begin{prop}
  The assignment $\rho$ is well defined, hence a natural transformation.
\end{prop}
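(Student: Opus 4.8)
The plan is to exploit the universal property of $\fun{M}\fun{F}_1(X, \leq)$. Recall that $\fun{M}\fun{F}_1(X, \leq)$ is the free meet-semilattice on the set of generators $\{ \mon a \mid a \in \fun{F}_1(X, \leq) \}$, subject only to the family of relations $\mon(a \wedge b) \leq \mon a$, where $a \wedge b$ is the meet in the implicative semilattice $\fun{F}_1(X, \leq) = \fun{F}(X, \leq)$, i.e.\ $a \cap b$. Thus, to obtain a well-defined $\cat{SL}$-morphism $\rho_{(X, \leq)} : \fun{M}\fun{F}_1(X, \leq) \to \fun{FH}(X, \leq)$ extending the assignment $\mon a \mapsto \lambda^{\mon}_{(X, \leq)}(a)$, I only need to check two things: first, that each $\lambda^{\mon}_{(X, \leq)}(a)$ is genuinely an element of $\fun{FH}(X, \leq)$, that is, a filter of the semilattice $\fun{H}(X, \leq)$; and second, that the images of the generators satisfy the defining inequalities, i.e.\ $\lambda^{\mon}_{(X, \leq)}(a \cap b) \leq \lambda^{\mon}_{(X, \leq)}(a)$ in $\fun{FH}(X, \leq)$ for all filters $a, b$ of $(X, \leq)$.

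For the first point I would simply invoke the observation made just before the proposition, namely that $\lambda^{\mon}_{(X, \leq)}(a) = \{ W \in \fun{H}(X, \leq) \mid a \in W \}$ is a filter of $\fun{H}(X, \leq)$ (nonempty, up-closed, and closed under the meet $\cap$ of $\fun{H}(X, \leq)$). For the second point, the key is to track the order on $\fun{FH}(X, \leq)$ correctly: it is the semilattice of filters of $\fun{H}(X, \leq)$ with meet $\cap$, hence ordered by inclusion, so the required inequality $\lambda^{\mon}_{(X, \leq)}(a \cap b) \leq \lambda^{\mon}_{(X, \leq)}(a)$ unwinds to the set-theoretic inclusion $\{ W \mid a \cap b \in W \} \subseteq \{ W \mid a \in W \}$. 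This is immediate: if $a \cap b \in W$ then, since $a \cap b \subseteq a$ and $W$ is up-closed by the very definition of $\fun{H}(X, \leq)$, we get $a \in W$. Hence the relation is respected and $\rho_{(X, \leq)}$ extends uniquely to a semilattice homomorphism.

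Naturality of $\rho = (\rho_{(X, \leq)})_{(X, \leq) \in \cat{IF}}$ is then automatic: as recalled in the general setup of dialgebraic logics presented via predicate liftings and axioms, once the axiom is sound — which is exactly what the second point above establishes — the induced transformation $\rho$ inherits naturality from that of the predicate lifting $\lambda^{\mon}$, which was proved in the preceding proposition. Concretely, for an I-frame morphism $f : (X, \leq) \to (X', \leq')$ one checks the relevant naturality square on generators $\mon a$, where it reduces to the already-verified naturality square for $\lambda^{\mon}$, and then extends to all of $\fun{M}\fun{F}_1(X, \leq)$ because both composites are semilattice homomorphisms agreeing on generators. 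I do not expect a genuine obstacle here; the only point requiring care is the bookkeeping of the order on $\fun{FH}(X, \leq)$ (inclusion, not reverse inclusion), so that the axiom $\mon(a \wedge b) \leq \mon a$ translates into the correct inclusion of neighbourhood collections, after which the verification is a one-line appeal to up-closure.
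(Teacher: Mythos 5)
Your proposal is correct and takes essentially the same route as the paper: the heart of both is the verification that $\rho_{(X,\leq)}(\mon(a\wedge b))=\{W\mid a\cap b\in W\}\subseteq\{W\mid a\in W\}=\rho_{(X,\leq)}(\mon a)$, which follows from $a\cap b\subseteq a$ and the up-closure of $W$ built into the definition of $\fun{H}$, with naturality then inherited from the already-established naturality of $\lambda^{\mon}$ via the general predicate-lifting framework. Your extra care about the universal property of $\fun{M}\fun{F}_1(X,\leq)$ and the direction of the order on $\fun{FH}(X,\leq)$ just makes explicit what the paper leaves implicit.
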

\begin{proof}
  Let $(X, \leq)$ be an I-frame and $a, b$ filters in it.
  We have $a \wedge b \leq a$ so for all $W \in \fun{H}(X, \leq)$,
  $a \wedge b \in W$ implies $a \in W$.
  Therefore
  $$
    \rho_{(X, \leq)}(\mon(a \wedge b))
      = \{ W \in \fun{H}(X, \leq) \mid a \wedge b \in W \}
      \subseteq \{ W \in \fun{H}(X, \leq) \mid a \in W \}
      = \rho_{(X, \leq)}(\mon a).
  $$
  So $\rho$ is well defined and the dialgebraic logic is sound.
\end{proof}

  As we have seen in Section \ref{sec:dialg}, in order to obtain completeness,
  filter extensions and expressivity-somewhere-else, it suffices to find a
  natural transformation $\tau : \fun{FM} \to \fun{HF}_2$
  such that $\rho^{\flat} \circ \tau = \id$.
  We complete this section by giving such a $\tau$.
  To get inspiration for the definition of $\tau$, let us first unravel the definition
  of $\rho^{\flat}$. This is given on components by the concatenation
  $$
    \begin{tikzcd}
      \fun{HF}_2A
            \arrow[r, "\eta_{\fun{HF}_2A}"]
        & \fun{FFHF}_2A
            \arrow[r, "\fun{F}\rho_{\fun{F}_2A}"]
        & \fun{FMF}_1\fun{F}_2A
            \arrow[r, "\fun{FM}\theta'_A"]
        & \fun{FM}A.
    \end{tikzcd}
  $$
  In particular, for $W \in \fun{HF}_2A$ we have
  \begin{align*}
    \mon a \in \rho_A^{\flat}(W)
      &\iff \mon\tilde{a} \in \fun{F}\rho_{\fun{F}_2A}(\eta_{\fun{HF}_2A}(W)) \\
      &\iff \rho_{\fun{F}_2A}(\mon\tilde{a}) \in \eta_{\fun{HF}_2A}(W) \\
      &\iff \{ V \in \fun{HF}_2A \mid \tilde{a} \in V \} \in \eta_{\fun{HF}_2A}(W) \\
      &\iff W \in \{ V \in \fun{HF}_2A \mid \tilde{a} \in V \} \\
      &\iff \tilde{a} \in W
  \end{align*}
  
  In our definition of $\tau$ we require the notions of a clopen
  and closed filter. We call a filter on $\fun{F}_2A$ (i.e.~an element
  of $\fun{FF}_2A$) \emph{clopen} if it is of the form
  $\tilde{a} = \{ x \in \fun{F}_2A \mid a \in x \}$
  and \emph{closed} if it is the intersection of all clopen filters in
  which it is contained (cf.~Definition \ref{def:fun-V}).
  
\begin{defi}
  Let $A$ be an implicative semilattice and $U \in \fun{FM}A$. Then we define
  $\tau_A(U) \in \fun{HF}_2A$ by:
  \begin{itemize}
    \item For all clopen filters $\tilde{a} \in \fun{FF}_2A$, let
          $\tilde{a} \in \tau_A(U)$ if $\mon a \in U$;
    \item For all closed filters $c$, let $c \in \tau_A(U)$ if all clopen filters
          containing $c$ are in $\tau_A(U)$;
    \item If $p$ is any filter, let $p \in \tau_A(U)$ if there is a closed filter $c$
          such that $c \subseteq p$ and $c \in \tau_A(U)$.
  \end{itemize}
  This yields an assignment $\tau : \fun{FM} \to \fun{HF}_2$.
\end{defi}

  It is trivial to see that $\tau_A$ is well defined for every implicative semilattice $A$.

\begin{lem}
  For each $A \in \cat{ISL}$, the map $\tau_A$ is an M-frame morphism.
\end{lem}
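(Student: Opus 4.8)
The plan is to verify directly that $\tau_A$ preserves finite meets — which is exactly what it means to be a morphism in $\cat{MF}\cong\cat{SL}$ — so that monotonicity will follow automatically. Here $\fun{FM}A$ carries meet $\cap$ and top $\fun{M}A$, while $\fun{HF}_2A$ carries meet $\cap$ and top $\fun{F}(\fun{F}_2A)$, and I may already use that each $\tau_A(U)$ is a well-defined element of $\fun{HF}_2A$ (in particular it is up-closed under $\subseteq$). The first thing I would record is the membership pattern encoded by the three defining clauses: for a clopen filter, $\tilde a\in\tau_A(U)$ iff $\mon a\in U$; for a closed filter $c$, $c\in\tau_A(U)$ iff $\tilde a\in\tau_A(U)$ for every clopen $\tilde a\supseteq c$; and for an arbitrary filter $p$, $p\in\tau_A(U)$ iff some closed filter contained in $p$ lies in $\tau_A(U)$. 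I would also recall, from Lemma~\ref{lem:V-iso} and the remarks following it, that the closed filters of $\fun{F}_2A$ are precisely the principal up-sets, and that they are closed under the join operation $\wwedge$ (the smallest filter containing the union), since ${\uparrow}x\wwedge{\uparrow}y={\uparrow}(x\wedge y)$.

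For preservation of the top element, take $U=\fun{M}A$. Then $\mon a\in U$ for all $a\in A$, so every clopen filter of $\fun{F}_2A$ lies in $\tau_A(U)$, and hence by the second clause so does every closed filter. In particular the intersection of all clopen filters, namely $\{A\}=\bigcap_{a\in A}\tilde a$, lies in $\tau_A(U)$; since this closed filter is contained in every filter $p$ of $\fun{F}_2A$, the third clause gives $p\in\tau_A(U)$. Thus $\tau_A(\fun{M}A)=\fun{F}(\fun{F}_2A)$, as required.

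For binary meets I would check $\tau_A(U\cap V)=\tau_A(U)\cap\tau_A(V)$ clause by clause. On clopen filters it is immediate, since $\tilde a\in\tau_A(U\cap V)$ iff $\mon a\in U\cap V$ iff $\tilde a\in\tau_A(U)$ and $\tilde a\in\tau_A(V)$; the closed-filter case then follows by quantifying over the clopen filters above a given closed filter. Consequently $\tau_A(U\cap V)\subseteq\tau_A(U)\cap\tau_A(V)$ for arbitrary filters, directly from the third clause and the closed case. For the reverse inclusion — the one substantive point — suppose $p\in\tau_A(U)\cap\tau_A(V)$ and choose closed filters $c_1\subseteq p$ with $c_1\in\tau_A(U)$ and $c_2\subseteq p$ with $c_2\in\tau_A(V)$. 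Put $c=c_1\wwedge c_2$. Then $c\subseteq p$ because $p$ is a filter containing $c_1\cup c_2$; $c$ is again closed by the remark above; and since $c\supseteq c_1$, $c\supseteq c_2$ and $\tau_A(U),\tau_A(V)$ are up-closed, $c$ lies in both. Hence $c\in\tau_A(U\cap V)$ by the closed case, and so $p\in\tau_A(U\cap V)$ by the third clause.

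I expect this amalgamation — replacing the two separate closed witnesses $c_1,c_2$ below $p$ by the single closed witness $c_1\wwedge c_2$ — to be the only real obstacle; everything else is routine unwinding of the definition. The one point worth checking at the outset is that the three clauses are mutually consistent on filters that happen to be both clopen and closed, which uses monotonicity of $\mon$ (immediate from the axiom $\mon(a\wedge b)\leq\mon a$, since $a\leq b$ gives $\mon a=\mon(a\wedge b)\leq\mon b$); but this is precisely the well-definedness of $\tau_A$ that is taken for granted just before the lemma.
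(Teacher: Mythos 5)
Your proof is correct and follows essentially the same route as the paper's: top is preserved because the intersection of all clopen filters is the singleton of the top filter, which lies below every filter, and binary meets are handled by amalgamating the two closed witnesses below $p$ into the single closed filter $c_U \wwedge c_V$, using the identification of closed filters with principal upsets from Lemma~\ref{lem:V-iso} and the surrounding remarks. The only differences (arguing the forward inclusion clause by clause, and using up-closedness of $\tau_A(U),\tau_A(V)$ rather than quantifying directly over clopens above $c$) are cosmetic.
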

\begin{proof}
  First we prove that $\tau_A$ preserves top elements.
  The top element of $\fun{FM}A$ is $\fun{M}A$ and
  we observe that $\{ \top \} \in \tau_A(\fun{M}A)$ because
  $\{ \top \} = \bigcap \{ \tilde{a} \mid a \in A \}$.
  Furthermore this implies $\{ \top \}$ is closed. Since every other filter
  of $A$ is a superset of $\{ \top \}$ this implies that
  $\tau_A(\fun{M}A) = \fun{F}_2A$, which is the top element of
  $\fun{HF}_2A$.
  
  We now prove that $\tau_A$ preserves meets.
  Let $U, V \in \fun{FM}A$. We have to prove that
  $$
    \tau_A(U) \cap \tau_A(V) = \tau_A(U \cap V).
  $$ 
  If $p \in \tau_A(U \cap V)$ then
  there is a closed filter $c \subseteq p$ such that for all clopen filters
  $\tilde{a}$ containing $c$, $\mon a \in U \cap V$. Clearly this implies
  that $c \in \tau_A(U)$ and $c \in \tau_A(V)$, hence $p \in \tau_A(U) \cap \tau_A(V)$.
  Conversely, suppose $p \in \tau_A(U) \cap \tau_A(V)$. Then there are closed
  filters $c_U \in \tau_A(U)$ and $c_V \in \tau_A(V)$ that are both contained
  in $p$.
  Let $c = \bigcap \{ \tilde{a} \mid c_U \subseteq \tilde{a} \text{ and } c_V \subseteq \tilde{a} \}$.
  Then we know from the duality for the Vietoris filter functor that
  $c = c_U \wwedge c_V = \{ x \cap y \mid x \in c_U, y \in c_V \}$.
  But since both $c_U$ and $c_V$ are subsets of $p$ and $p$ is a filter 
  (hence closed under $\cap$, which acts as meet in $\fun{F}_2A$)
  this implies that $c \subseteq p$.
  By construction $c \in \tau_A(U \cap V)$,
  so as a consequence we have $p \in \tau_A(U \cap V)$.
\end{proof}

\begin{lem}
  $\tau$ is natural.
\end{lem}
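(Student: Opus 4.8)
The plan is to reduce the naturality square to a single transparent description of the components $\tau_A$, and then to verify it by a short computation. Unravelling the functors, naturality of $\tau$ at an $\cat{ISL}$-morphism $h : A \to B$ is the assertion that
$$
  \tau_A(\fun{FM}h(U)) = \fun{HF}_2h(\tau_B(U))
$$
for every $U \in \fun{FM}B$, where $\fun{FM}h(U) = (\fun{M}h)^{-1}(U)$, $\fun{F}_2h$ acts on filters as $h^{-1}$, and $\fun{HF}_2h(W) = \{\, c \in \fun{FF}_2A \mid (\fun{F}_2h)^{-1}(c) \in W \,\}$. Both sides are up-closed families of filters of $\fun{F}_2A$, so it suffices to check that a filter $p$ of $\fun{F}_2A$ belongs to one iff it belongs to the other.

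The key preliminary step is a reformulation of $\tau_A$. Write $S_U := \{\, a \in A \mid \mon a \in U \,\}$; the axiom $\mon(a\wedge b) \leq \mon a$ makes $\mon$ monotone, so $S_U$ is an up-set of $A$. By the remark following Lemma~\ref{lem:V-iso}, the closed filters of the I-space $\fun{F}_2A$ are exactly the principal up-sets ${\uparrow}q = \{\, x \in \fun{F}_2A \mid q \subseteq x \,\}$ with $q \in \fun{F}_2A$, and one has ${\uparrow}q \subseteq \tilde a$ iff $a \in q$, and ${\uparrow}q \subseteq p$ iff $q \in p$. Feeding these facts into the three clauses defining $\tau_A$ collapses them to
$$
  p \in \tau_A(U) \iff \text{there is a filter } q \in p \text{ with } q \subseteq S_U,
$$
where ``$q \in p$'' means that the filter $q$ of $A$ is a member of the filter $p$ of $\fun{F}_2A$. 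I would prove this as a standalone lemma; it is the only point at which the structure of closed filters (hence the Vietoris-style duality of Section~\ref{sec:duality}) enters, and I expect it to be the main obstacle --- everything downstream is formal.

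Granting the reformulation, the square is a direct chase. One computes $S_{\fun{FM}h(U)} = h^{-1}(S_U)$ straight from $\fun{M}h(\mon a) = \mon h(a)$, and notes $q' \in (\fun{F}_2h)^{-1}(p) \iff h^{-1}(q') \in p$. For the inclusion $\fun{HF}_2h(\tau_B(U)) \subseteq \tau_A(\fun{FM}h(U))$: if a filter $q' \subseteq S_U$ of $B$ witnesses membership in the left-hand side, then $q := h^{-1}(q')$ is a filter of $A$ lying in $p$ with $q \subseteq h^{-1}(S_U) = S_{\fun{FM}h(U)}$, so it witnesses membership in the right-hand side. Conversely, if $q \in p$ witnesses $p \in \tau_A(\fun{FM}h(U))$, so $h[q] \subseteq S_U$, put $q' := {\uparrow}h[q]$; since $h$ preserves finite meets this is a filter of $B$, it satisfies $q' \subseteq S_U$ because $S_U$ is an up-set, and $h^{-1}(q') \supseteq q$ lies in $p$ because $p$ is up-closed, so $q'$ witnesses membership in $\fun{HF}_2h(\tau_B(U))$. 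Hence the two sides agree for every $p$, $U$ and $h$, which is naturality. The routine typing checks (that $\fun{F}_2h$ is continuous and bounded, that preimages of filters under semilattice morphisms are filters, and that $\fun{M}h$ acts as stated on generators) go exactly as in the proof of Proposition~\ref{prop:i-G-tau}, so I would simply invoke that pattern rather than repeat it.
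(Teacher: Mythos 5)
Your proof is correct, but it takes a different route from the paper. The paper's proof reduces the naturality square to agreement on clopen filters: it asserts that the relevant elements of $\fun{HF}_2A$ are uniquely determined by the clopen filters $\tilde{a}$ they contain, and then verifies $\tilde{a} \in \tau_A((\fun{M}h)^{-1}(U)) \iff \tilde{a} \in \fun{H}(\fun{F}_2h)(\tau_B(U))$ by a six-line equivalence chase through the generators $\mon a \mapsto \mon h(a)$ --- essentially the same computation as your identity $S_{\fun{FM}h(U)} = h^{-1}(S_U)$, but carried out only on clopens. You instead prove a standalone reformulation, $p \in \tau_A(U)$ iff some filter $q \in p$ satisfies $q \subseteq S_U$, by combining the principal-up-set description of closed filters (the remark after Lemma \ref{lem:V-iso}) with the observations ${\uparrow}q \subseteq \tilde{a} \iff a \in q$ and ${\uparrow}q \subseteq p \iff q \in p$, and then check both inclusions of the square for arbitrary $p$ with explicit witnesses ($q = h^{-1}(q')$ one way, $q' = {\uparrow}h[q]$ the other, the latter using that $S_U$ is an up-set, i.e.\ the monotonicity axiom). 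What your approach buys is that it does not rely on the determination-by-clopens principle at all: as stated in the paper that principle is not true of arbitrary up-closed families in $\fun{HF}_2A$, only of the particular families appearing in the square, and justifying it for $\fun{H}(\fun{F}_2h)(\tau_B(U))$ amounts to roughly the same witness analysis you perform; so your argument is more self-contained where the paper is terse. What the paper's approach buys is brevity: granting the determination principle, the verification collapses to a one-line computation on generators. Both proofs are sound, and your reformulation lemma is verified correctly (all three clauses of the definition of $\tau_A$ collapse to the stated condition, consistently on clopen, closed and arbitrary filters).
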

\begin{proof}
  Let $f : A \to B$ be an implicative semilattice morphism. We need to show that
  $$
    \begin{tikzcd}
      \fun{FM}A
            \arrow[r, "\tau_A"]
        & \fun{HF}_2A \\
      \fun{FM}B
            \arrow[r, "\tau_B"]
            \arrow[u, "(\fun{M}f)^{-1}"]
        & \fun{HF}_2B
            \arrow[u, "\fun{H}f^{-1}" right]
    \end{tikzcd}
  $$
  commutes.
  Let $U \in \fun{FM}B$ and $p \in \fun{FF}_2A$.
  Since elements of $\fun{HF}_2A$ are uniquely determined by the clopen filters
  they contain, it suffices to prove that
  $\tilde{a} \in \tau_A((\fun{M}f)^{-1}(U))$ if and only if
  $\tilde{a} \in \fun{H}f^{-1}(\tau_B(U))$ for all $a \in A$.
  To see that this is indeed the case, compute
  \begin{align*}
    \tilde{a} \in \tau_A((\fun{M}f)^{-1}(U))
      &\iff \mon a \in (\fun{M}f)^{-1}(U) \\
      &\iff (\fun{M}f)(\mon a) \in U \\
      &\iff \mon f(a) \in U \\
      &\iff \widetilde{f(a)} \in \tau_B(U) \\
      &\iff (f^{-1})^{-1}(\tilde{a}) \in U \\
      &\iff \tilde{a} \in \fun{H}f^{-1}(\tau_B(U))
  \end{align*}
  So $\tau$ is a natural transformation.
\end{proof}

  We now show that $\tau$ is right inverse to $\rho^{\flat}$.

\begin{prop}
  For all $A \in \cat{ISL}$, the composition
  $$
    \begin{tikzcd}
      \fun{FM}A
            \arrow[r, "\tau_A"]
        & \fun{HF}_2A
            \arrow[r, "\rho_A^{\flat}"]
        & \fun{FM}A
    \end{tikzcd}
  $$
  is the identity.
\end{prop}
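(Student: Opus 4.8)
The plan is to check that $\rho_A^{\flat}(\tau_A(U)) = U$ for every filter $U \in \fun{FM}A$. I would first record the elementary bookkeeping fact that a filter of $\fun{M}A$ is completely determined by which of the generators $\mon a$, $a \in A$, it contains: every element of $\fun{M}A$ is a finite meet $\mon a_1 \wedge \cdots \wedge \mon a_n$ of generators (the empty meet being $\top$), and since each $\mon a_i$ dominates this meet, being up-closed and closed under meets makes $\mon a_1 \wedge \cdots \wedge \mon a_n \in U$ equivalent to $\mon a_i \in U$ for all $i$. Because $\rho_A^{\flat}(\tau_A(U))$ is again a filter of $\fun{M}A$, it therefore suffices to prove, for each $a \in A$, that $\mon a \in \rho_A^{\flat}(\tau_A(U))$ if and only if $\mon a \in U$.

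Next I would combine the two descriptions that are already on the table. The unravelling of $\rho^{\flat}$ carried out above shows that for any $W \in \fun{HF}_2A$ one has $\mon a \in \rho_A^{\flat}(W)$ if and only if $\tilde a \in W$; instantiating $W = \tau_A(U)$ gives that $\mon a \in \rho_A^{\flat}(\tau_A(U))$ is equivalent to $\tilde a \in \tau_A(U)$. On the other side, the first clause in the definition of $\tau_A$ says precisely that the clopen filter $\tilde a$ lies in $\tau_A(U)$ if and only if $\mon a \in U$. Chaining the two equivalences yields $\mon a \in \rho_A^{\flat}(\tau_A(U))$ iff $\mon a \in U$, which by the first paragraph completes the argument.

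The only point I would treat with some care — it is exactly what makes $\tau_A$ well defined — is that the first clause in the definition of $\tau_A$ genuinely \emph{characterises} membership of clopen filters, rather than merely being a sufficient condition: since the second and third clauses build up membership in the layers clopen $\to$ closed $\to$ arbitrary, and a closed filter $c$ belongs to $\tau_A(U)$ exactly when every clopen filter containing $c$ does, any clopen $\tilde a$ that the third clause places in $\tau_A(U)$ via some closed $c \subseteq \tilde a$ already satisfies $\mon a \in U$. I expect this to be the only mildly fiddly step; once it is settled, the proposition is immediate from the two equivalences above.
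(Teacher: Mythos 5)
Your proof is correct and takes essentially the same route as the paper's: reduce, via the observation that a filter of $\fun{M}A$ is determined by the generators $\mon a$ it contains, to the equivalence $\mon a \in U \iff \tilde{a} \in \tau_A(U)$, and combine it with the previously unravelled fact that $\mon a \in \rho_A^{\flat}(W)$ iff $\tilde{a} \in W$. The point you flag as needing care in the converse direction (a clopen $\tilde{a}$ entering $\tau_A(U)$ only via a closed witness $c \subseteq \tilde{a}$, which by the second clause already forces $\mon a \in U$) is exactly how the paper argues it as well.
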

\begin{proof}
  We have seen that $\mon a \in \rho^{\flat}(W)$ iff $\tilde{a} \in W$.
  Since a filter in $\fun{FM}A$ is determined uniquely by the elements
  of the form $\mon a$ it contains (where $a \in A$), it suffices to
  show that for all $U \in \fun{FM}A$ we have
  \begin{equation}
    \mon a \in U \iff \tilde{a} \in \tau_A(U).
  \end{equation}
  The direction from left to right holds by definition.
  Conversely, suppose $\tilde{a} \in \tau_A(U)$.
  Then there exists a closed filter $c \in \tau_A(U)$ such that $c \subseteq \tilde{a}$.
  But this implies $\mon a \in U$ by definition of $\tau$.
\end{proof}

  As a consequence of the existence of such a $\tau$, we get 
  completeness, filter extensions and expressivity-somewhere-else.
  Concretely, the filter extension of an $(\fun{i}, \fun{H})$-dialgebra
  $(X, \leq, \gamma)$ is based on the implicative semilattice
  $(\hat{X}, \subseteq) = \fun{FF}(X, \leq)$.
  The dialgebra structure $\hat{\gamma}$ on $(\hat{X}, \subseteq)$ is given as follows.
  For a filter $p \in \hat{X}$ and a filter
  $U \in \fun{F}(\hat{X}, \subseteq)$ we define:
  \begin{itemize}
    \item If $U$ is a clopen filter, i.e.~if $U = \tilde{a}$ for some
          $a \in \fun{F}(X, \leq)$,
          then $U \in \hat{\gamma}(p)$ iff $\{ x \in X \mid a \in \gamma(x) \} \in p$;
    \item If $U$ is a closed filter, then $U \in \hat{\gamma}(p)$ iff all
          clopen filters containing $U$ are in $\hat{\gamma}(p)$;
    \item If $U$ is any filter, then $U \in \hat{\gamma}(p)$ iff there
          is a closed filter $C$ such that $C \subseteq U$ and $C \in \hat{\gamma}(p)$.
  \end{itemize}
  Expressivity-somewhere else then states:
  
\begin{thm}
  Two states $x$ and $y$
  in $(X, \leq, \gamma)$ are logically equivalent if and only if
  $\eta_{(X, \leq)}(x), \eta_{(X, \leq)}(y) \in (\hat{X}, \subseteq)$ are 
  behaviourally equivalent. That is, if they are identified by some
  $(\fun{i}, \fun{H})$-dialgebra morphism.
\end{thm}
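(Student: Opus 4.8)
The plan is to derive the statement from the general theory of dialgebraic logic in \cite{GroPat20}, in complete analogy with the treatment of $\Box$ in Section~\ref{sec:dialg}. The key input is already in hand: the natural transformation $\tau : \fun{FM} \to \fun{HF}_2$ constructed above satisfies $\rho^{\flat} \circ \tau = \id_{\fun{FM}}$ on components, and the ambient adjunction between $\cat{IF}$/$\cat{ISL}$ and $\cat{MF}$/$\cat{SL}$ is well-structured, exactly as recorded in Section~\ref{sec:dialg} (the unit $\eta_A : A \to \fun{FF}A$ is an implicative semilattice homomorphism). Hence the hypotheses of \cite[Theorem 7.5]{GroPat20} are met, so every $(\fun{i}, \fun{H})$-dialgebra $(X, \leq, \gamma)$ is expressive-somewhere-else via its filter extension $(\hat{X}, \subseteq, \hat{\gamma})$, which is itself expressive, and the theory map $\oth_\gamma$ of $(X, \leq, \gamma)$ factors as $\oth_{\hat{\gamma}} \circ \eta_{(X, \leq)}$.

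First I would make the filter extension concrete. By the definitions in \cite{GroPat20} it is the $(\fun{i}, \fun{H})$-dialgebra underlying the descriptive frame dual to the complex algebra $(\fun{F}_1(X, \leq), \gamma^*)$, so its carrier is $\fun{FF}(X, \leq) = (\hat{X}, \subseteq)$ and its structure map is obtained by transporting $\gamma^*$ along $\tau$. Unfolding the explicit formula for $\tau$, together with the description of closed filters as intersections of the clopen filters containing them (Lemma~\ref{lem:closed-clp-fil}) and the definition of $\gamma^*$, yields exactly the three-clause description of $\hat{\gamma}$ stated just before the theorem: on a clopen filter $\tilde{a}$ one has $\tilde{a} \in \hat{\gamma}(p)$ iff $\{ x \in X \mid a \in \gamma(x) \} \in p$; closed filters are then handled by intersection; and arbitrary filters by containment of a closed one. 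This step is purely a matter of chasing definitions.

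Next I would assemble the chain of equivalences. Logical equivalence of $x, y \in X$ means $\oth_\gamma(x) = \oth_\gamma(y)$; since $\oth_\gamma = \oth_{\hat{\gamma}} \circ \eta_{(X, \leq)}$ this is equivalent to $\oth_{\hat{\gamma}}(\eta_{(X, \leq)}(x)) = \oth_{\hat{\gamma}}(\eta_{(X, \leq)}(y))$. Because $(\hat{X}, \subseteq, \hat{\gamma})$ is expressive, its theory map identifies precisely the behaviourally equivalent states, so this last identity holds if and only if $\eta_{(X, \leq)}(x)$ and $\eta_{(X, \leq)}(y)$ are behaviourally equivalent, i.e.\ identified by some $(\fun{i}, \fun{H})$-dialgebra morphism. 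For the converse implication one additionally uses that dialgebra morphisms are truth-preserving, so that logical equivalence is invariant under them; this follows from initiality of $(\Psi, \ell)$ just as in the $\Box$-case, and closes the biconditional.

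The main obstacle I anticipate is the bookkeeping: verifying that the abstract filter extension of \cite{GroPat20} unwinds to the concrete $\hat{\gamma}$ displayed before the theorem, and double-checking that all standing assumptions of \cite[Theorem 7.5]{GroPat20} (well-structuredness and componentwise right-invertibility of $\rho^{\flat}$) genuinely apply to the logic $(\fun{M}, \rho)$. None of this is conceptually difficult, but it requires carefully matching notation between the two settings; once that translation is done, the logical content of the theorem is inherited wholesale from {\it op.~cit.}
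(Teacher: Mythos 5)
Your proposal is correct and takes essentially the same route as the paper, which also derives this theorem by invoking the general results of \cite{GroPat20} (expressivity of the filter extension and expressivity-somewhere-else), with the natural transformation $\tau$ satisfying $\rho^{\flat} \circ \tau = \id$ and well-structuredness as the key inputs, exactly as set up for $\Box$ in Section~\ref{sec:dialg}. The only difference is that you spell out the factorisation of the theory map and the unfolding of the abstract filter extension into the concrete $\hat{\gamma}$, details the paper leaves to the reader.
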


  As explained in Section \ref{sec:dialg}, this theorem is a consequence
  of general results in \cite{GroPat20}.

\begin{rem}
  It can be proved in a similar manner as in Section \ref{sec:fragments} that the system
  $\mathbf{MI_{\mon}} = \mathbf{MI} + \mon(a \wedge b) \leq \mon a$
  characterises the meet-implication-modality-fragment of
  intuitionistic logic with a \emph{geometric modality} from \cite[Section 6]{Gol93}.
  The latter was called monotone modal intuitionistic logic in \cite[Section 8]{GroPat20}.
\end{rem}

\section{Conclusion}

  We have started investigating modal extensions of the $(\top, \wedge, \to)$-fragment
  of intuitionistic logic, and shown that this fits in the framework
  of dialgebraic logic.
  We list some potential directions for further research.

  \subsection*{Different modal operators}
        While we have given two modal extensions of $\mathbf{MI}$, there
        may be many other interesting ones. For example, it would be interesting
        to investigate modalities of higher arities,
        or a weak negation modality like in \cite{Cel99}.
  \subsection*{Kripke-style bisimulation}
        Another equivalence notion between $\Box$-frames, besides behavioural equivalence,
        is given by Kripke-style bisimulation. While it is clear that
        bisimilar states are logically equivalent, it would be interesting
        to investigate the converse (and thus derive a Hennessy-Milner result).
        Similar investigations can be carried out for the extension
        of $\mathbf{MI}$ with a monotone operator,
        where the notion of bisimulation should be inspired by the one for
        monotone modal logic over a classical base \cite{Han03,HanKup04}.
  \subsection*{Co- and contravariant functors}
        In many examples of modal logic based on a dual adjunction,
        the contravariant functor turning frames into algebras has a
        \emph{covariant} counterpart:
        For classical modal logic we have a co- and contravariant powerset functor;
        In positive modal logic, taking the collection of up-closed subsets of a poset
        gives rise to both a co- and a contravariant functor.
        The same happens here, where the contravariant filter functor $\fun{F}$
        and the covariant filter functor $\fun{G}$ (that can be extended to $\cat{MF}$)
        coincide on objects.
        This raises the question whether these are instances of a more
        general phenomenon. Perhaps one can identify those dual adjunction
        that allow for such a covariant counterpart.

%

\bibliographystyle{alphaurl}
\bibliography{meet-and-to-biblio.bib}{}

\begin{thebibliography}{AMNS11}

\bibitem[AMNS11]{AltEA11}
T.~Altenkirch, P.~Morris, F.~Nordvall, and A.~Setzer.
\newblock A categorical semantics for inductive-inductive definitions.
\newblock In A.~Corradini, B.~Klin, and C.~C{\^i}rstea, editors, {\em Algebra
  and Coalgebra in Computer Science}, pages 70--84, Berlin, Heidelberg, 2011.
  Springer.

\bibitem[Bal69]{Bal69}
R.~Balbes.
\newblock A representation theory for prime and implicative semilattices.
\newblock {\em Transactions of the American Mathematical Society},
  136:261--267, 1969.

\bibitem[BCGJ15]{BezEA15}
N.~Bezhanishvili, D.~Coumans, S.~J.~van Gool, and D.~de Jongh.
\newblock Duality and universal models for the meet-implication fragment of
  {IPC}.
\newblock {\em Lecture Notes in Computer Science}, pages 97--116, 2015.

\bibitem[BD84]{BozDos84}
M.~Bo\v{z}i\'{c} and K.~Do\v{s}en.
\newblock Models for normal intuitionistic modal logics.
\newblock {\em Studia Logica}, 43:217--245, 1984.

\bibitem[BJ11]{BezJan11}
G.~Bezhanishvili and R.~Jansana.
\newblock {P}riestley style duality for distributive meet-semilattices.
\newblock {\em Studia Logica: An International Journal for Symbolic Logic},
  98(1):83--122, 2011.

\bibitem[BJ13]{BezJan13}
G.~Bezhanishvili and R.~Jansana.
\newblock Esakia style duality for implicative semilattices.
\newblock {\em Applied Categorical Structures}, 21:181--208, 2013.

\bibitem[Blo12]{Blo12}
A.~Blok.
\newblock Interaction, observation and denotation.
\newblock Master's thesis, University of Amsterdam, 2012.
\newblock URL: \url{https://eprints.illc.uva.nl/872/1/MoL-2012-06.text.pdf}.

\bibitem[Bul66]{Bul66}
R.~A. Bull.
\newblock {MIPC} as the formalisation of an intuitionist concept of modality.
\newblock {\em The Journal of Symbolic Logic}, 31(4):609--616, 1966.

\bibitem[Cel99]{Cel99}
S.~Celani.
\newblock Distributive lattices with a negation operator.
\newblock {\em Mathematical Logic Quarterly}, 45(2):207--218, 1999.

\bibitem[Che80]{Che80}
B.~F. Chellas.
\newblock {\em Modal Logic: An Introduction}.
\newblock Cambridge University Press, Cambridge, 1980.

\bibitem[Cia13]{Cia13}
V.~Ciancia.
\newblock Interaction and observation: Categorical semantics of reactive
  systems trough dialgebras.
\newblock In R.~Heckel and S.~Milius, editors, {\em Algebra and Coalgebra in
  Computer Science}, pages 110--125, Berlin, Heidelberg, 2013. Springer.

\bibitem[Cur63]{Cur63}
H.~B. Curry.
\newblock {\em Foundations of Mathematical Logic}.
\newblock {M}c{G}raw-{H}ill, New York, 1963.

\bibitem[dG20]{Gro20}
J.~de~Groot.
\newblock Positive monotone modal logic, 2020.
\newblock Accepted for publication in Studia Logica.

\bibitem[Dun95]{Dun95}
J.~M. Dunn.
\newblock Positive modal logic.
\newblock {\em Studia Logica}, 55:301--317, 1995.

\bibitem[DW83]{DavWer83}
B.~A. Davey and H.~Werner.
\newblock Dualities and equivalences for varieties of algebras.
\newblock In {\em Contributions to Lattice Theory (Szeged, 1980)}, 1983.

\bibitem[FM14]{FonMor14}
J.~M. Font and T.~Moraschini.
\newblock Logics of varieties, logics of semilattice, and conjunction.
\newblock {\em Logic Journal of {IGPL}}, 22:818--843, 2014.

\bibitem[FS80]{Fis80}
G.~Fischer~Servi.
\newblock Semantics for a class of intuitionistic modal calculi.
\newblock In D.~Chiara and M.~Luisa, editors, {\em Italian Studies in the
  Philosophy of Science}, pages 59--72, Dordrecht, 1980. Springer Netherlands.

\bibitem[Gol93]{Gol93}
R.~I. Goldblatt.
\newblock {\em Mathematics of Modality}.
\newblock CSLI publications, Stanford, California, 1993.

\bibitem[Gol05]{Gol05}
R.~I. Goldblatt.
\newblock Axiomatic classes of intuitionistic models.
\newblock {\em Journal of Universal Computer Science}, 11(12):1945--1962, 2005.

\bibitem[GP20]{GroPat20}
J.~de Groot and D.~Pattinson.
\newblock Modal intuitionistic logics as dialgebraic logics.
\newblock In {\em Proc.~{LICS}'20}, pages 355--–369, New York, 2020.
  Association for Computing Machinery.

\bibitem[Gr{\"{a}}78]{Gra78}
G.~Gr{\"{a}}tzer.
\newblock {\em General Lattice Theory}.
\newblock Birkh\"{a}user Verlag, Basel, 1978.

\bibitem[Hag87]{Hag87}
T.~Hagino.
\newblock {\em A categorical programming language}.
\newblock PhD thesis, University of Edinburgh, 1987.

\bibitem[Han03]{Han03}
H.~H. Hansen.
\newblock Monotonic modal logics.
\newblock Master's thesis, Institute for Logic, Language and Computation,
  University of Amsterdam, 2003.

\bibitem[HK04]{HanKup04}
H.~H. Hansen and C.~Kupke.
\newblock A coalgebraic perspective on monotone modal logic.
\newblock {\em Electronic Notes in Theoretical Computer Science}, 106:121--143,
  2004.

\bibitem[HMS74]{HofMisStr74}
K.~H. Hofmann, M.~Mislove, and A.~Stralka.
\newblock {\em The {P}ontryagin duality of compact 0-dimensional semi-lattices
  and its applications}.
\newblock Springer, Berlin, New York, 1974.

\bibitem[Koj11]{Koj11}
K.~Kojima.
\newblock Which classical correspondence is valid in intuitionistic modal
  logic?
\newblock {\em Logic Journal of the IGPL}, 20(1):331--348, 2011.

\bibitem[Mon55]{Mon55}
A.~Monteiro.
\newblock Axiomes ind\'{e}pendents pour les algebres de {B}rouwer.
\newblock {\em Revista de la {U}nion {M}athematica {A}rgentina y de la
  {A}ssociation {F}isira {A}rgentina}, 27:149--160, 1955.

\bibitem[Mos99]{Mos99}
L.S. Moss.
\newblock Coalgebraic logic.
\newblock {\em Annals of Pure and Applied Logic}, 96(1):277--317, 1999.
\newblock \href {https://doi.org/10.1016/S0168-0072(98)00042-6}
  {\path{doi:10.1016/S0168-0072(98)00042-6}}.

\bibitem[Nem65]{Nem65}
W.~Nemitz.
\newblock Implicative semi-lattices.
\newblock {\em Transactions of the {A}merican {M}athematical {S}ociety},
  117:128--142, 1965.

\bibitem[NW71]{NemWha71}
W.~Nemitz and T.~Whaley.
\newblock Varieties of impicative semilattices.
\newblock {\em Pacific Journal of Mathematics}, 37(3), 1971.

\bibitem[Pro12]{Pro12}
C.~Proietti.
\newblock Intuitionistic epistemic logic, {K}ripke models and {F}itch's
  paradox.
\newblock {\em Journal of Philosophical Logic}, 41(5):877--900, 2012.
\newblock \href {https://doi.org/10.1007/s10992-011-9207-1}
  {\path{doi:10.1007/s10992-011-9207-1}}.

\bibitem[PS86]{PloSti86}
G.~Plotkin and C.~Stirling.
\newblock A framework for intuitionistic modal logics: Extended abstract.
\newblock In {\em Proceedings of the 1986 Conference on Theoretical Aspects of
  Reasoning about Knowledge}, TARK ’86, pages 399--406, San Francisco, CA,
  USA, 1986. Morgan Kaufmann Publishers Inc.

\bibitem[PZ01]{PolZwa01}
E.~Poll and J.~Zwanenburg.
\newblock From algebras and coalgebras to dialgebras.
\newblock {\em Electronic Notes in Theoretical Computer Science},
  44(1):289--307, 2001.

\bibitem[WZ99]{WolZak98}
F.~Wolter and M.~Zakharyaschev.
\newblock Intuitionistic modal logic.
\newblock In A.~Cantini, E.~Casari, and P.~Minari, editors, {\em Logic and
  Foundations of Mathematics: Selected Contributed Papers of the Tenth
  International Congress of Logic, Methodology and Philosophy of Science},
  pages 227--238, Dordrecht, 1999. Springer Netherlands.

\bibitem[Yok85]{Yok85}
S.~Yokota.
\newblock General characterization results on intuitionistic modal
  propositional logics.
\newblock {\em Commentarii Mathematici Universitatis Sancti Pauli},
  34(2):177--199, 1985.

\end{thebibliography}

\appendix
\section{A Hilbert-style system for meet-implication logic}

  We present a Hilbert-style axiom system for the meet-implication
  fragment of intuitionistic propositional logic and establish
  soundness and completeness with respect to an equational
  axiomatisation.  We consider the $\lor$-free fragment of
  propositional logic over a set $V$ of propositional variables.
  More precisely, the language $\lan{L}$ is given by the grammar
  $$
    \phi ::= \top \mid p \mid \phi \wedge \phi \mid \phi \to \phi
  $$
  where $p \in \Prop$ ranges over the set of propositional variables. We
  use standard operator precedence and assume that $\land$ binds
  more tightly than $\to$.

\begin{defi}
  The Hilbert-style deductive system for the meet-implication
  fragment of intuitionistic propositional logic is given by the
  axioms
  \begin{enumerate}[(H$_1$)]
    \item \label{it:H1} $a \to (b \to a)$
    \item \label{it:H2} $(a \to (b \to c)) \to ((a \to b) \to (a \to c))$
    \item \label{it:H3} $(a \wedge b) \to a$
    \item \label{it:H4} $(a \wedge b) \to b$
    \item \label{it:H5} $a \to (b \to (a \wedge b))$
    \item \label{it:H6} $\top$
  \end{enumerate}
  If $\Gamma$ is a set of formulae and $a$ is a formula, we 
  say that $a$ \emph{is deducible in $\Hilb$} from $\Gamma$ 
  if $\Gamma \entails_\Hilb a$ can be derived using the
  rules 
  \[ \mbox{(\assum)}\frac{}{\Gamma \entails_\Hilb a} (\mbox{if } a \in \Gamma)
  \qquad\quad
     \mbox{(\axio)}\frac{}{\Gamma \entails_\Hilb a} (\mbox{if } a \in H) 
  \qquad\quad
   \mbox{(\mopo)}\frac{\Gamma \entails_\Hilb a \qquad \Gamma \entails_\Hilb
  a \to b}{\Gamma \entails_\Hilb b}
  \]
  where $H$ is the set of substitution instances of the axioms
  \ref{it:H1} -- \ref{it:H6} above.
  We write $\Hilb \entails a$ if $\emptyset \entails_\Hilb a$.
\end{defi}

  Our goal is to show that this Hilbert system is equivalent to
  the equational system used to axiomatise implicative meet
  semilattices in the literature (see e.g.~\cite{Nem65}) introduced next.

\begin{defi}
  Let $\ms{E}$ be the equational system consisting of
  \begin{enumerate}[(E$_1$)]
    \item \label{it:E1} $a \wedge (b \wedge c) = (a \wedge b) \wedge c$
    \item \label{it:E2} $a \wedge b = b \wedge a$
    \item \label{it:E3} $a \wedge a = a$
    \item \label{it:E4} $a \wedge \top = a$.
  \end{enumerate}
  The set of \emph{provable} equations in $\Eq$ is the least set of
  equations containing all substitution instances of \ref{it:E1}
  -- \ref{it:E4} above that is closed under the rules of equational
  reasoning
  \[ (\mathrm{ref})\frac{}{a = a} \qquad (\mathrm{sym})\frac{a = b}{b = a} \qquad
     (\mathrm{trans})\frac{a = b \quad b = c}{a = c} 
     \qquad(\mathrm{cong}) \frac{a_1 = b_1 \quad a_2 = b_2}{a_1 \circ a_2 = b_1
     \circ b_2}  
   \]
  for $\circ \in \lbrace \land, \to \rbrace$, as well as the
  residuation equivalence  (displayed as a pair of rules)
  \[  \frac{a \land b \leq c}{a \leq b \to c} 
      \qquad 
      \frac{a \leq b \to c}{a \land b \leq c} \]
  where we write, as usual, $a \leq b$ for $a \land b = a$.  
\end{defi}

Our goal is to establish soundness and completeness of $\Hilb$ with
respect to $\Eq$. That is, $\Hilb \entails a$ iff $\Eq \entails a =
\top$. We begin with a few simple facts on derivability in $\Hilb$,
where $\Gamma$ is a set of formulae, and $a, b$ are formulae. The
first is the admissibility of weakening.

\begin{lem}[Weakening]\label{lemma:weak}
  $\Gamma, a \entails_\Hilb  b$ whenever $\Gamma \entails_\Hilb b$.
\end{lem}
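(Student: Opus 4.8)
The plan is to prove weakening by induction on the derivation of $\Gamma \entails_\Hilb b$, following the standard pattern for Hilbert systems. The statement says that adding an extra hypothesis $a$ to the context never destroys derivability, so I will take a derivation of $\Gamma \entails_\Hilb b$ and transform it, rule by rule, into a derivation of $\Gamma, a \entails_\Hilb b$. Since the three rules $(\assum)$, $(\axio)$, $(\mopo)$ are \emph{exactly the same} whether the context is $\Gamma$ or $\Gamma \cup \{a\}$, this should essentially be a matter of observing that each rule application remains valid in the larger context.

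In more detail, I proceed by induction on the height of the derivation tree witnessing $\Gamma \entails_\Hilb b$, with a case analysis on the last rule applied. If the last rule is $(\axio)$, then $b \in H$ and the very same instance of $(\axio)$ derives $\Gamma, a \entails_\Hilb b$, since membership in $H$ does not depend on the context. If the last rule is $(\assum)$, then $b \in \Gamma$, so also $b \in \Gamma \cup \{a\}$, and $(\assum)$ gives $\Gamma, a \entails_\Hilb b$. If the last rule is $(\mopo)$, then there is some formula $c$ with shorter derivations of $\Gamma \entails_\Hilb c$ and $\Gamma \entails_\Hilb c \to b$; by the induction hypothesis we obtain $\Gamma, a \entails_\Hilb c$ and $\Gamma, a \entails_\Hilb c \to b$, and one application of $(\mopo)$ yields $\Gamma, a \entails_\Hilb b$. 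This exhausts the cases.

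There is essentially no obstacle here: the lemma is true precisely because the deductive apparatus of $\Hilb$ treats the context purely as a set from which assumptions may be drawn, with no side condition that would be sensitive to the presence of extra formulae (in contrast to natural-deduction systems with discharge, where a corresponding statement still holds but requires a little care). The only thing worth stating cleanly is the induction metric — height of the derivation — so that the appeal to the induction hypothesis in the $(\mopo)$ case is justified. I would therefore keep the proof to a couple of sentences naming the induction and dispatching the three cases.
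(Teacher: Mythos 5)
Your proof is correct and is exactly the routine induction on derivations that the paper alludes to (the paper omits the details); the three cases are handled precisely as needed. Nothing is missing.
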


\noindent
The proof is a routine induction on derivations, and therefore
omitted. The next two facts are used in the proof of the deduction
theorem. 

\begin{lem}\label{lemma:simple}
  We have
  (1) $\Gamma \entails_\Hilb a \to a$;
  and (2) If $\Gamma \entails_\Hilb a$ then $\Gamma \entails_\Hilb b \to a$.
\end{lem}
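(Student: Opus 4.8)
The plan is to derive both facts over the empty context using only the axioms and modus ponens, and then lift them to an arbitrary $\Gamma$ via Weakening (Lemma~\ref{lemma:weak}).

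For~(1) I would run the classical Hilbert derivation of $a \to a$. First instantiate \ref{it:H2} as $(a \to ((a \to a) \to a)) \to ((a \to (a \to a)) \to (a \to a))$ and \ref{it:H1} as $a \to ((a \to a) \to a)$; an application of modus ponens (\mopo) yields $(a \to (a \to a)) \to (a \to a)$. Instantiating \ref{it:H1} once more as $a \to (a \to a)$ and applying (\mopo) again gives $a \to a$. Every line in this derivation is either an (\axio) step or a (\mopo) step, so $\emptyset \entails_\Hilb a \to a$, and Lemma~\ref{lemma:weak} upgrades this to $\Gamma \entails_\Hilb a \to a$ for any $\Gamma$.

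For~(2), assume $\Gamma \entails_\Hilb a$. Since $a \to (b \to a)$ is a substitution instance of \ref{it:H1}, we have $\Gamma \entails_\Hilb a \to (b \to a)$ by (\axio); applying (\mopo) to this together with the hypothesis $\Gamma \entails_\Hilb a$ produces $\Gamma \entails_\Hilb b \to a$, as required.

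There is no genuine obstacle here; the only point worth care is that the derivation in~(1) must use nothing but axiom instances and modus ponens so that it holds over the empty context and Weakening applies — alternatively one can simply build the same derivation directly over $\Gamma$, using (\axio) for each axiom instance.
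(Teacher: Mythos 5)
Your proof is correct and is essentially the paper's own argument: the same H$_1$/H$_2$ instances and two applications of (\mopo) for part~(1), and H$_1$ plus (\mopo) for part~(2). The only cosmetic difference is that the paper carries out the derivation directly over $\Gamma$ rather than deriving over the empty context and invoking Weakening, which is exactly the alternative you yourself note.
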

\begin{proof}
For the second item, note that $\Gamma \entails_\Hilb a \to (b \to
a)$ by \ref{it:H1}. As $\Gamma \entails_\Hilb a$ by assumption, an
application of (mp) gives $\Gamma \entails_\Hilb b \to a$. 

For the first claim, note that we have the following three instances
of axioms
\begin{deduction}
1 & \Gamma \entails_\Hilb & (a \to (a \to a) \to a) \to ((a \to (a \to
a))  \to (a \to a)) & \ref{it:H2} \\
2 & \Gamma \entails_\Hilb & a \to ((a \to a) \to a) &
\ref{it:H1} \\
3 & \Gamma \entails_\Hilb & a \to (a \to a) & \ref{it:H1}
\end{deduction}
whence we may conclude that
\begin{deduction}
4 & \Gamma \entails_\Hilb & (a \to (a \to a))  \to (a \to a) &
(\mopo 2 1) \\
5 & \Gamma \entails_\Hilb & a \to a & (\mopo 3 4)
\end{deduction}
as required.
\end{proof}

With these preparations, we are now ready for the deduction theorem.

\begin{lem}[Deduction Theorem]\label{lemma:ded-thm}
  We have $\Gamma \entails_\Hilb a \to b$ if and only if $\Gamma, a
  \entails_\Hilb b$.
\end{lem}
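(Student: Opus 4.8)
The plan is to prove the Deduction Theorem in the standard way, by two inductions --- one for each direction --- while being careful that the only rule besides (\mopo) that can discharge an assumption is (\assum), and that (\axio) is unaffected.

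For the direction from right to left, I would assume $\Gamma, a \entails_\Hilb b$ and induct on the length of this derivation. If $b$ is an axiom instance (rule (\axio)), then $\Gamma \entails_\Hilb b$ by (\axio) as well, and then $\Gamma \entails_\Hilb a \to b$ by Lemma~\ref{lemma:simple}(2). If $b$ was added by (\assum), there are two sub-cases: either $b \in \Gamma$, in which case $\Gamma \entails_\Hilb b$ by (\assum) and again Lemma~\ref{lemma:simple}(2) gives $\Gamma \entails_\Hilb a \to b$; or $b = a$, in which case $\Gamma \entails_\Hilb a \to a$ by Lemma~\ref{lemma:simple}(1). Finally, if $b$ was obtained by (\mopo) from $\Gamma, a \entails_\Hilb c$ and $\Gamma, a \entails_\Hilb c \to b$, the induction hypothesis gives $\Gamma \entails_\Hilb a \to c$ and $\Gamma \entails_\Hilb a \to (c \to b)$; using axiom~\ref{it:H2}, namely $(a \to (c \to b)) \to ((a \to c) \to (a \to b))$, together with two applications of (\mopo), we obtain $\Gamma \entails_\Hilb a \to b$, closing the induction.

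For the direction from left to right, I would assume $\Gamma \entails_\Hilb a \to b$. By Weakening (Lemma~\ref{lemma:weak}) we get $\Gamma, a \entails_\Hilb a \to b$, and clearly $\Gamma, a \entails_\Hilb a$ by (\assum); a single application of (\mopo) then yields $\Gamma, a \entails_\Hilb b$. This direction is essentially immediate given the lemmas already established.

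The main obstacle --- really the only non-routine point --- is the (\mopo) case in the right-to-left induction, where one has to produce the right chained instance of \ref{it:H2} and apply modus ponens twice; everything else is bookkeeping over the shape of the derivation rules. The inductions are otherwise completely standard, so I would keep the write-up brief, spelling out the (\mopo) step and the (\assum) split explicitly and leaving the axiom and weakening cases as one-liners.
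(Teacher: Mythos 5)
Your proposal is correct and follows essentially the same route as the paper: weakening plus (\mopo) for the forward direction, and induction on the derivation for the converse, handling the (\assum)/(\axio) cases via Lemma~\ref{lemma:simple} and the (\mopo) case via axiom~\ref{it:H2} with two applications of modus ponens. No gaps.
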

\begin{proof}
  For ``only if'' assume that $\Gamma \entails_\Hilb a \to b$. Using
  weakening (Lemma \ref{lemma:weak}) we obtain that $\Gamma, a
  \entails_\Hilb a \to b$. As $\Gamma, a \entails_\Hilb a$ by
  (\assum), an application of (\mopo) yields the claim.
  
  We show ``if'' by induction on the derivation of $\Gamma, a \vdash_\Hilb b$.
  If $\Gamma, a \entails_\Hilb b$ has been derived using (\assum),
  we distinguish the cases $b \in \Gamma$ and $a = b$. 
  In the first case, 
  $\Gamma \entails_\Hilb b$ (again using (\assum)) 
  whence $\Gamma \entails_\Hilb a \to b$
  using Lemma \ref{lemma:simple}. If $a = b$, then 
  Lemma \ref{lemma:simple} again gives $\Gamma \entails a \to a$,
  i.e.~$\Gamma \entails a \to b$.

  If $b \in H$ is a substitution
  instance of \ref{it:H1} -- \ref{it:H6}, then $\Gamma \entails b$
  (for the same reason), and again Lemma \ref{lemma:simple} gives
  $\Gamma \entails a \to b$.
  
  The final case to consider is that 
  $\Gamma, b$ was derived using
  (\mopo). This means that there is a formula $c$ and shorter
  derivations of $\Gamma, a \entails_\Hilb c \to b$ and $\Gamma, a
  \entails_\Hilb c$. We apply the induction hypotheses and
  instantiate \ref{it:H2} to obtain
  \begin{deduction}
  1 & \Gamma \entails_\Hilb & a \to (c \to b) &(IH)\\
  2 & \Gamma \entails_\Hilb & a \to c &(IH) \\
  3 & \Gamma \entails_\Hilb & (a \to (c \to b)) \to ( (a \to c) \to (a
  \to b)) &\ref{it:H2}
  \end{deduction}
  which allows us to use $(\mopo)$ twice to obtain that
  \begin{deduction}
  4 & \Gamma \entails_\Hilb & (a \to c) \to (a \to b)  & (\mopo 1 3) \\
  5 & \Gamma \entails_\Hilb & a \to b & (\mopo 2 4)
  \end{deduction}
  as required, which finishes the proof.
\end{proof}

%

Our reasoning so far is valid in minimal logic, and we are adding
conjunction. The next lemma describes the mechanics of reasoning
with conjunction, both on the left and on the right.

\begin{lem}\label{lemma:conj}
  $\Gamma \entails a \land b$ if and only if both $\Gamma \entails a$ and
  $\Gamma \entails_\Hilb b$. 
  Also, for any formula $c$, 
  $\Gamma, a \land b \entails c$ if and only if $\Gamma, a, b
\entails_\Hilb c$.
\end{lem}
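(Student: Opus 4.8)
The plan is to treat the two biconditionals separately, drawing only on the conjunction axioms \ref{it:H3}--\ref{it:H5}, on (\mopo), and on the already-established weakening (Lemma~\ref{lemma:weak}) and deduction theorem (Lemma~\ref{lemma:ded-thm}).

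For the first claim, the forward direction is immediate: from $\Gamma \entails_\Hilb a \land b$ together with the axiom instances $(a \land b) \to a$ and $(a \land b) \to b$ (namely \ref{it:H3} and \ref{it:H4}), two applications of (\mopo) give $\Gamma \entails_\Hilb a$ and $\Gamma \entails_\Hilb b$. For the converse, I would start from the instance $a \to (b \to (a \land b))$ of \ref{it:H5} and apply (\mopo) first against $\Gamma \entails_\Hilb a$ and then against $\Gamma \entails_\Hilb b$, obtaining $\Gamma \entails_\Hilb a \land b$.

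For the second claim, I would first record two consequences of the first claim: that $\Gamma, a, b \entails_\Hilb a \land b$ (apply (\assum) to $a$ and to $b$, then the first claim) and, dually, that $\Gamma, a \land b \entails_\Hilb a$ and $\Gamma, a \land b \entails_\Hilb b$ (apply (\assum) to $a \land b$, then the first claim). For the direction ``$\Gamma, a \land b \entails_\Hilb c$ implies $\Gamma, a, b \entails_\Hilb c$'', apply the deduction theorem to get $\Gamma \entails_\Hilb (a \land b) \to c$, weaken it to $\Gamma, a, b \entails_\Hilb (a \land b) \to c$, and discharge it against $\Gamma, a, b \entails_\Hilb a \land b$ by (\mopo). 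For the other direction, apply the deduction theorem twice to $\Gamma, a, b \entails_\Hilb c$ to get $\Gamma \entails_\Hilb a \to (b \to c)$, weaken to $\Gamma, a \land b \entails_\Hilb a \to (b \to c)$, and then apply (\mopo) successively against $\Gamma, a \land b \entails_\Hilb a$ and $\Gamma, a \land b \entails_\Hilb b$.

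No step is genuinely hard; the only point requiring a little care is the ``cut''-style move used in both directions of the second claim, where a hypothesis is internalised via the deduction theorem, weakened into the enlarged context, and then re-eliminated with (\mopo). It is worth isolating this pattern once, since it is precisely the mechanism that lets us pass between $\land$ on the left of $\entails$ and a comma-separated pair of hypotheses.
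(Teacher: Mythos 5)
Your proposal is correct and follows essentially the same route as the paper: the first claim via \ref{it:H3}--\ref{it:H5} and (\mopo), and the second via the deduction theorem plus weakening to internalise the hypothesis and then discharge it with (\mopo) against $\Gamma,a,b \entails_\Hilb a \land b$ (resp.\ $\Gamma, a\land b \entails_\Hilb a$ and $\Gamma, a\land b \entails_\Hilb b$). The only cosmetic difference is that you obtain $\Gamma, a\land b \entails_\Hilb a$ and $\Gamma, a\land b \entails_\Hilb b$ by invoking the first claim, whereas the paper re-derives them directly from (\axio) and (\mopo), which amounts to the same steps.
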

\begin{proof}
  The first claim is straightforward. Note that $\Gamma \entails a \to (b \to a \land b)$ by (\axio). Now
  apply (\mopo) twice. For the converse, we have that both $\Gamma
  \entails a \land b \to a$ and $\Gamma \entails a \land b \to b$
  and we obtain the claim using (\mopo). We turn to the second
  statement.

  First suppose that $\Gamma, a \land b \entails c$. By the
  deduction theorem, $\Gamma \entails_\Hilb a \land b \to c$. Using
  weakening this gives $\Gamma, a, b \entails_\Hilb a \land b \to
  c$. On the other hand, we have $\Gamma, a, b \entails_\Hilb a$ and
  $\Gamma, a, b \entails_\Hilb b$ using (\assum) so that the first part of
  this lemma gives $\Gamma, a, b \entails_\Hilb a \land b$.
  We then get $\Gamma, a, b \entails_\Hilb c$ from (\mopo).

  Conversely, suppose that $\Gamma, a, b \entails_\Hilb c$. 
  It is easy to see that $\Gamma, a \land b \entails_\Hilb a$, as
  $\Gamma, a \land b \entails_\Hilb a \land b$ by (\assum). Also,
  $\Gamma, a \land b \entails_\Hilb a \land b \to a$ by (\axio) so
  that applying (\mopo) gives $\Gamma, a \land b \entails a$. A
  symmetric argument shows that $\Gamma, a \land b \entails_\Hilb
  b$. 
  Using the deduction theorem and weakening, we obtain $\Gamma,
  a \land b \entails_\Hilb a \to (b \to c)$ from our assumption
  $\Gamma, a, b \entails_\Hilb c$. As both $\Gamma, a \land b
  \entails_\Hilb a$ and $\Gamma, a \land b \entails_\Hilb b$, applying (\mopo)
  twice yields $\Gamma, a \land b \entails_\Hilb c$ as desired.
\end{proof}

%
%
We now have collected all required prerequisites for the soundness
of the equational system $\Eq$ with respect to $\Hilb$. 
\begin{prop}
  If $\ms{E} \vdash a = b$, then $\ms{H} \vdash a \to b$ and $\ms{H} \vdash b \to a$.
\end{prop}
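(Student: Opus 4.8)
The plan is to proceed by induction on the derivation of $a = b$ in $\Eq$, showing at each step that $\Hilb \entails a \to b$ and $\Hilb \entails b \to a$ (for brevity, that $\Hilb$ proves $a \leftrightarrow b$). The two workhorses throughout are the Deduction Theorem (Lemma~\ref{lemma:ded-thm}), which reduces a goal $\Hilb \entails \phi \to \psi$ to the derivability $\phi \entails_\Hilb \psi$, and Lemma~\ref{lemma:conj}, which lets us split and recombine conjunctions freely on both sides of $\entails_\Hilb$.

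For the base cases I would verify the four equational axioms by hand. Taking \ref{it:E1} as representative: to show $\Hilb \entails (a \wedge (b \wedge c)) \to ((a \wedge b) \wedge c)$, the Deduction Theorem reduces this to deriving $(a \wedge b) \wedge c$ from the assumption $a \wedge (b \wedge c)$, and by Lemma~\ref{lemma:conj} that assumption is interchangeable with the three assumptions $a$, $b$, $c$, from which $(a \wedge b) \wedge c$ follows by another application of Lemma~\ref{lemma:conj}; the reverse implication and the axioms \ref{it:E2}--\ref{it:E4} are entirely analogous. The rule (\refl) is exactly Lemma~\ref{lemma:simple}(1), and (\sym) is immediate since the inductive statement is symmetric in $a$ and $b$.

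For (\transi), the induction hypothesis gives $\Hilb$-proofs of $a \leftrightarrow b$ and $b \leftrightarrow c$; via the Deduction Theorem this yields $a \entails_\Hilb b$ and $b \entails_\Hilb c$, hence $a \entails_\Hilb c$ by weakening (Lemma~\ref{lemma:weak}) and (\mopo), so $\Hilb \entails a \to c$, and symmetrically. For (\congu) with $\wedge$, from the assumption $a_1 \wedge a_2$ one obtains $a_1$ and $a_2$, then $b_1$ and $b_2$ using the induction hypothesis, then $b_1 \wedge b_2$ via Lemma~\ref{lemma:conj}; the $\to$ case proceeds by taking the hypothesis $a_1 \to a_2$, deriving $a_1$ from $b_1$, then $a_2$ by (\mopo), then $b_2$ from the induction hypothesis.

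The hard part will be the pair of residuation rules, since $a \leq b$ there abbreviates the equation $a \wedge b = a$ and must be unfolded before the propositional machinery applies. For the downward rule, the premise $a \wedge b \leq c$ is by definition a derivation of $\Eq \entails (a \wedge b) \wedge c = a \wedge b$, so the induction hypothesis supplies $\Hilb \entails (a \wedge b) \to ((a \wedge b) \wedge c)$, which composed with the projection axiom \ref{it:H4} gives $\Hilb \entails (a \wedge b) \to c$; hence $a, b \entails_\Hilb c$, so $\Hilb \entails a \to (b \to c)$ by the Deduction Theorem, and therefore $\Hilb \entails a \to (a \wedge (b \to c))$ by Lemma~\ref{lemma:conj}, while the reverse implication $\Hilb \entails (a \wedge (b \to c)) \to a$ is an instance of \ref{it:H3} --- that is, $\Hilb$ proves $a \leftrightarrow a \wedge (b \to c)$, which is precisely the conclusion $a \leq b \to c$. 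The upward rule is the mirror image: from a $\Hilb$-proof of $a \leftrightarrow a \wedge (b \to c)$ one extracts $a \entails_\Hilb b \to c$, hence $a \wedge b \entails_\Hilb c$, giving $\Hilb \entails (a \wedge b) \to ((a \wedge b) \wedge c)$ by Lemma~\ref{lemma:conj}, with the reverse implication once more an instance of \ref{it:H3}. Once all rule cases are dispatched the induction closes, establishing the proposition.
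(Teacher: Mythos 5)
Your proof is correct and follows essentially the same route as the paper: induction on the $\Eq$-derivation, with the axioms and congruence/transitivity cases dispatched via the Deduction Theorem and the conjunction lemma, and the residuation rules handled by unfolding $\leq$ into its defining equation and using the projection axioms \ref{it:H3}/\ref{it:H4} for the easy directions. The only differences are micro-level (e.g.\ you also write out the transitivity case, which the paper silently omits), so nothing substantive separates the two arguments.
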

\begin{proof}
  The proof proceeds by induction on the derivation.
  We first show that $\Hilb \entails
  a \to b$ and $\Hilb \entails b \to a$ for all substitution
  instances of the equations \ref{it:E1} -- \ref{it:E4}. 

  We begin with \ref{it:E1}, i.e.~associativity. By Lemma
  \ref{lemma:conj} we have that $a, b, c \entails_\Hilb a \land b$,
  and a second application yields $a, b, c \entails_\Hilb (a \land
  b) \land c$. Applying the second part of the same lemma twice now gives
  $a \land (b \land c) \entails_\Hilb (a \land b) \land c$. A
  symmetric argument shows that $\Hilb \entails (a \land b) \land c
  \to a \land (b \land c)$. 

  For commutativity, i.e.~\ref{it:E2}, note that $a, b \entails b
  \land a$ by Lemma \ref{lemma:conj} whence $a \land b
  \entails_\Hilb b \land a$ by the same lemma. The deduction theorem
  gives $\Hilb \entails a \land b \to b \land a$. Swapping $a$ and
  $b$ gives the converse implication.

  We turn to \ref{it:E3}, idempotency. Note that $\Hilb \entails a
  \land a \to a$ by (\axio). The converse direction, i.e. $\Hilb
  \entails a \to a \land a$ follows from observing that $a \to a
  \land a$ is a consequence of Lemma \ref{lemma:conj} and an
  application of the deduction theorem. 

  For identity, that is \ref{it:E4},  note that $a \land \top
  \to a$ is an instance of \ref{it:H3}. We show the converse
  direction, i.e.~$\Hilb \entails a \to a \land \top$. As $a
  \entails_\Hilb a$ and $a \entails_\Hilb \top$ by (\assum) and (\axio),
  respectively, Lemma \ref{lemma:conj} gives that $a \entails_\Hilb
  a \land \top$, and an application of the deduction theorem yields
  the claim.

  We turn to the laws of equational logic. For (\refl), note that
  $\Hilb \entails a \to a$ by Lemma \ref{lemma:simple}. If $\Eq
  \entails a = b$ is has been derived from $\Eq \entails b = a$ using
  symmetry, the claim is immediate as the inductive hypothesis
  yields $\Hilb \entails b \to a$ and $\Hilb \entails a \to b$. 

  Now assume that $\Eq \entails a_1 \land b_1 = a_2 \land b_2$ has
  been derived using congruence from $\Eq \entails a_1 = a_2$ and
  $\Eq \entails b_1 = b_2$. We only show that $\Hilb \entails a_1
  \land b_1 \to a_2 \land b_2$ as the other implication is almost
  identical. Applying the deduction theorem to the induction
  hypotheses (the shorter derivations of $a_1 = a_2$ and $b_1 =
  b_2$) we obtain that $a_1 \entails_\Hilb a_2$ and $b_1
  \entails_\Hilb b_2$. Using weakening and Lemma \ref{lemma:conj}
  this gives $a_1, b_1 \entails a_2 \land b_2$ and applying Lemma
  \ref{lemma:conj} again, together with the deduction theorem,  gives
  $\Hilb \entails a_1 \land b_1  \to a_2 \land b_2$.

  For the second congruence law, assume that $\Eq \entails  a_1 \to
  b_1 = a_2 \to b_2$ has 
  been derived using congruence from $\Eq \entails a_1 = a_2$ and 
  $\Eq \entails b_1 = b_2$. Again, we only demonstrate that $\Hilb
  \entails (a_1 \to b_1) \to (a_2 \to b_2)$.  The induction
  hypotheses for the derivations of $a_1 = a_2$ and $b_1 = b_2$
  imply
  $\Hilb \entails a_2 \to a_1$ and $\Hilb \entails b_1 \to b_2$,
  respectively. We obtain:
  \begin{wideded}
    1 & a_1 \to b_1, a_2 \entails_\Hilb & a_2 \to a_1 & 
        (IH) and weakening \\
    2 & a_1 \to b_1, a_2 \entails_\Hilb & a_1 & 
        (\mopo) using (\assum) and (1)\\
    3 & a_1 \to b_1, a_2 \entails_\Hilb & b_1 &
        (\mopo) using (2) and (\assum) \\
    4 & a_1 \to b_1, a_2 \entails_\Hilb & b_1 \to b_2 &
        (IH) and weakening \\
    5 & a_1 \to b_1, a_2 \entails_\Hilb & b_2 &
        (\mopo) using (3) and (4)
  \end{wideded}
  so that a double application of the deduction theorem yields the
  claim, i.e.~$\Hilb \entails (a_1 \to b_1) \to (a_2 \to b_2)$.

  This leaves the residuation rule. First assume that $\Eq \entails a \leq b \to
  c$ has been concluded from $\Eq \entails a \land b \leq c$. Unfolding
  the definition of $\leq$, we assume that $\Eq \entails (a \land b)
  \land c = a \land b$ and show that both $\Hilb \entails (a \land (b \to c))
  \to a$ and $\Hilb \entails a \to (a \land b \to c)$. The former is
  a substitution instance of (\axio). For the latter, note that the
  inductive hypothesis for the shorter derivation $\Eq \entails a
  \land b  = (a \land b) \land c$ implies that $\Hilb \entails (a
  \land b) \to (a \land b) \land c$. The deduction theorem and Lemma
  \ref{lemma:conj} gives $a, b \entails_\Hilb (a \land b) \land c$.
  Another application of the same lemma, together with the deduction
  theorem yields $a\entails_\Hilb b \to c$. Again, using Lemma
  \ref{lemma:conj} and (\assum), we obtain $a \entails_\Hilb a \land
  (b  \to c)$. Applying the deduction theorem finally yields $\Hilb
  \entails a \to a \land (b \to c)$ as required.

  Now assume that $\Eq \entails a \land b \leq c$ has been derived
  from $\Eq \entails a \leq b \to c$. Again unfolding $\leq$, we
  assume that $\Eq \entails a \land (b \to c) = a$ and have to show
  that $\Hilb \entails (a \land b) \land c \to a \land b$ and
  $\Hilb \entails a \land b \to (a \land b) \land c$. The former is
  a substitution instance of axiom \ref{it:H3}.  For the latter,
  note that the inductive hypothesis, coupled with the deduction
  theorem, gives $a \entails_\Hilb a \land b \to c$.  Applying Lemma
  \ref{lemma:conj} and the deduction theorem a second time gives $a,
  b \entails c$. Now we apply Lemma \ref{lemma:conj} again to obtain
  $a \land b \entails_\Hilb c$. Using (\assum) and Lemma
  \ref{lemma:conj} yet again, this entails $a \land b \entails (a
  \land b) \land c$. A final application of the deduction theorem
  yields the claim.

This finishes the analysis of all cases in the derivation, and hence
the proof. 
\end{proof}

The first part of the promised equivalence of the systems $\Eq$ and
$\Hilb$ is now an easy corollary.

\begin{cor}\label{cor:eq-to-hilb}
  Suppose that $\Eq \vdash a = \top$. Then $\ms{H} \vdash a$.
\end{cor}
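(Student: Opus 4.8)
The plan is to read the corollary off directly from the proposition that immediately precedes it. Instantiating that proposition with $b = \top$, the hypothesis $\Eq \vdash a = \top$ yields in particular $\Hilb \entails \top \to a$ (the proposition actually gives both implications, but only this direction is needed; the companion $\Hilb \entails a \to \top$ is the uninteresting half). First I would note that $\Hilb \entails \top$, which is nothing but the axiom \ref{it:H6}, available via the rule (\axio). Then a single application of (\mopo) to $\Hilb \entails \top$ and $\Hilb \entails \top \to a$ produces $\Hilb \entails a$, which is exactly the claim.

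There is essentially no obstacle at this stage: all of the genuine work has already been carried out in the preceding proposition, whose proof treats the substitution instances of \ref{it:E1}--\ref{it:E4}, the rules of equational reasoning (reflexivity, symmetry, transitivity, congruence), and the two residuation rules, in each case exhibiting the required $\Hilb$-derivations of both implications. The only point worth stating explicitly in the write-up is the passage from the bi-implication $\Hilb \entails \top \to a$ (and $\Hilb \entails a \to \top$) to $\Hilb \entails a$ itself, which is the modus-ponens step against \ref{it:H6} described above. Since this corollary furnishes one half of the promised equivalence of $\Eq$ and $\Hilb$, I would keep the proof to these two or three lines and leave the converse direction ($\Hilb \entails a$ implies $\Eq \vdash a = \top$) to be handled separately.
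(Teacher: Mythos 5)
Your proof is correct and coincides with the paper's: both instantiate the preceding proposition to obtain $\Hilb \entails \top \to a$, note $\Hilb \entails \top$ via \ref{it:H6}, and conclude with a single application of (\mopo).
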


\begin{proof}
Using the previous lemma, we obtain that $\Hilb \entails \top \to
a$. As $\Hilb \entails \top$ by \ref{it:H6}, the claim follows from (\mopo).
\end{proof}

  Our next goal is the converse of the above corollary. That is, we will prove that
  $\ms{H} \vdash f$ implies $\ms{E} \vdash f = \top$. As before, we
  split the proof into several lemmas. Throughout, and as before, we write $a \leq
  b$ as a shorthand for $a \land b = a$.

\begin{lem}\label{lem:leqgeq}
  We have $\ms{E} \vdash a = b$ if and only if
  $\ms{E} \vdash a \leq b$ and $\ms{E} \vdash b \leq a$. Also, we
  always have that  $\Eq
  \entails a \leq \top$.
\end{lem}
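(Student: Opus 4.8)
The plan is to prove everything directly from the equational axioms \ref{it:E1}--\ref{it:E4} and the rules of equational reasoning, systematically unfolding the abbreviation $a \leq b := (a \wedge b = a)$. There is essentially no creative content: the lemma is pure bookkeeping with the defining rules of $\Eq$.

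For the forward direction of the biconditional, suppose $\Eq \entails a = b$. By symmetry we also have $\Eq \entails b = a$, so congruence applied to the equations $a = a$ and $b = a$ gives $\Eq \entails a \wedge b = a \wedge a$; chaining this with idempotency \ref{it:E3} via transitivity yields $\Eq \entails a \wedge b = a$, that is, $\Eq \entails a \leq b$. Running the same argument with the roles of $a$ and $b$ interchanged produces $\Eq \entails b \wedge a = b$, i.e.~$\Eq \entails b \leq a$.

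For the converse, assume $\Eq \entails a \wedge b = a$ and $\Eq \entails b \wedge a = b$. Then $a = a \wedge b = b \wedge a = b$, where the middle equality is a substitution instance of commutativity \ref{it:E2}, and two applications of transitivity deliver $\Eq \entails a = b$. Finally, the statement $\Eq \entails a \leq \top$ unfolds to $\Eq \entails a \wedge \top = a$, which is literally a substitution instance of the identity axiom \ref{it:E4}.

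I do not anticipate a genuine obstacle here. The only points requiring a little care are choosing the correct argument of $\wedge$ when invoking congruence, and remembering to pass through (sym) so that the two defining inequalities come out in the intended orientation.
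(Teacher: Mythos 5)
Your proof is correct and follows essentially the same route as the paper's: unfold $a \leq b$ as $a \wedge b = a$, derive the two inequalities from $a = b$ via symmetry, congruence, idempotency \ref{it:E3} and transitivity, recover $a = b$ from the two inequalities using commutativity \ref{it:E2} (which you, helpfully, make explicit where the paper leaves it implicit), and read off $a \leq \top$ as an instance of \ref{it:E4}.
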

\begin{proof}
  Given that $\Eq \entails a \leq b$ and $\Eq \entails b \leq a$, we
  have that $a = a \land b$ by unfolding $a \leq b$, and $b = b
  \land a$ by unfolding $b \leq a$. Hence $\Eq \entails a = b$ using
  symmetry and transitivity. If conversely $\Eq \entails a = b$, we
  have that $a \land b = a \land a = a$ using congruence,
  reflexivity, $(E_3)$ and transitivity. Similarly, one shows that
  $\Eq \entails b \land a = b$. The second statement, $\Eq \entails
  a \leq \top$, unfolds to $\Eq \entails a \land \top = a$ which is
  $(E_4)$.
\end{proof}


%
\begin{lem}\label{lem:res3}
  The following are deducible in $\Eq$:
  \begin{enumerate}
    \item $\ms{E} \vdash a \wedge b = (a \wedge b) \wedge (a \to b)$;
    \item $\ms{E} \vdash a \wedge b = a \wedge (a \to b)$.
  \end{enumerate}
\end{lem}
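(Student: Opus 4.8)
The plan is to reduce both equalities to two instances of the residuation rules together with the elementary meet manipulations available from \ref{it:E1}--\ref{it:E4}. Throughout I would use without further comment the standard facts that $\Eq$ proves $x \wedge y \leq x$, that $x \wedge y \leq y$, and that $\Eq \vdash x \leq y$ and $\Eq \vdash x \leq z$ together imply $\Eq \vdash x \leq y \wedge z$; each of these is a one-line consequence of associativity, commutativity and idempotency, and $\Eq \vdash a \leq \top$ is already recorded in Lemma~\ref{lem:leqgeq}. By Lemma~\ref{lem:leqgeq} it then suffices in each case to establish the two inequalities separately.

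The crucial observation is that $\Eq \vdash a \wedge b \leq a \to b$. Indeed, $(a \wedge b) \wedge a = a \wedge b$ by commutativity, associativity and idempotency, and $a \wedge b \leq b$, so $\Eq \vdash (a \wedge b) \wedge a \leq b$; applying the residuation rule $\frac{x \wedge y \leq z}{x \leq y \to z}$ with $x = a \wedge b$, $y = a$, $z = b$ yields $\Eq \vdash a \wedge b \leq a \to b$. Dually, $\Eq \vdash a \wedge (a \to b) \leq b$: from $a \to b \leq a \to b$ (idempotency) the other residuation rule $\frac{x \leq y \to z}{x \wedge y \leq z}$ gives $(a \to b) \wedge a \leq b$, which is $a \wedge (a \to b) \leq b$ by commutativity.

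Part (1) then follows: the inequality $(a \wedge b) \wedge (a \to b) \leq a \wedge b$ is immediate, while $a \wedge b \leq (a \wedge b) \wedge (a \to b)$ follows by combining $a \wedge b \leq a \wedge b$ with $a \wedge b \leq a \to b$. For part (2), $a \wedge (a \to b) \leq a \wedge b$ follows by combining $a \wedge (a \to b) \leq a$ with $a \wedge (a \to b) \leq b$, and $a \wedge b \leq a \wedge (a \to b)$ follows by combining $a \wedge b \leq a$ with $a \wedge b \leq a \to b$. In both cases Lemma~\ref{lem:leqgeq} upgrades the two inequalities to the claimed equality.

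There is no genuine obstacle here; the only thing to watch is applying each residuation rule in the correct direction and recording the trivial meet identities such as $(a \wedge b) \wedge a = a \wedge b$ that make the hypotheses of those rules match syntactically.
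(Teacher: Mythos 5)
Your proof is correct and takes essentially the same route as the paper: both rest on the two residuation instances $a \wedge b \leq a \to b$ (from $(a\wedge b)\wedge a \leq b$) and $(a \to b)\wedge a \leq b$ (from $a\to b \leq a\to b$), differing only in bookkeeping, since you split each equality into two inequalities via Lemma~\ref{lem:leqgeq} while the paper unfolds $\leq$ directly and gets item (2) from item (1) by transitivity. Note a small redundancy in your part (1): $a \wedge b \leq a \to b$ already \emph{is}, by definition of $\leq$, the equation $(a\wedge b)\wedge(a\to b)=a\wedge b$, so the extra step combining it with $a\wedge b\leq a\wedge b$ is unnecessary.
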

\begin{proof}
  For the first item,  note that $(a \land b) \land (a \land b)= a
  \land b$ is an instance of \ref{it:E3}. Using associativity
  \ref{it:E1} and congruence, this gives
  $((a \land b) \land a) \land b = a \land b$. Given that $(a \land
  b)  \land a  = a \land b$ is easily established using
  \ref{it:E1}, \ref{it:E2} and \ref{it:E3}, this yields $((a
  \land b) \land a) \land b = (a \land b) \land a$.
  By residuation, this yields $(a \land b) \land (a \to b) = a \land
  b$.
  For the second item, it suffices to show that 
  $\ms{E} \vdash a \wedge (a \to b) = (a \wedge b) \wedge (a \to b)$
  by transitivity. 
  It follows from symmetry and Lemma \ref{lem:leqgeq} that
  $a \to b \leq a \to b$ and residuation then yields
  $(a \to b) \wedge a \leq b$.
  Spelling out $\leq$ now yields
  $((a \to b) \wedge a) \wedge b = (a \to b) \wedge a$
  and the statement now follows from using \ref{it:E1} and \ref{it:E2}.
\end{proof}

  We can now establish the converse of Corollary \ref{cor:eq-to-hilb}.

\begin{lem}
  If $\ms{H} \vdash f$, then $\ms{E} \vdash f = \top$.
\end{lem}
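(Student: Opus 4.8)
The plan is to induct on the derivation of $\Hilb \vdash f$. Since $f$ is derived with empty context, every step in the derivation is either an instance of an axiom \ref{it:H1}--\ref{it:H6}, or an application of (\mopo) to two previously derived formulae. So it suffices to check: (i) for each axiom $h \in H$, we have $\Eq \vdash h = \top$; and (ii) if $\Eq \vdash g = \top$ and $\Eq \vdash (g \to f) = \top$, then $\Eq \vdash f = \top$. For this to go through I will first want a small lemma characterising $\Eq \vdash (a \to b) = \top$ in terms of $\leq$.

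First I would record that $\Eq \vdash (a \to b) = \top$ if and only if $\Eq \vdash a \leq b$ (equivalently $\Eq \vdash a \wedge b = a$). The direction from right to left: from $a \wedge b = a$ and reflexivity $a \to b \leq a \to b$, residuation gives $(a \to b) \wedge a \leq b$; but $a \leq a \to b$ would follow from $a \wedge b \leq b$ — more directly, from $a \wedge b = a \leq b$ and $a \leq a$ we get (by the meet being a greatest lower bound, itself derivable from \ref{it:E1}--\ref{it:E4}) the needed inequality, hence $\top \leq a \to b$ via residuation on $\top \wedge a = a \leq b$, and with Lemma~\ref{lem:leqgeq} this yields $a \to b = \top$. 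Conversely, from $a \to b = \top$ we get $a \wedge \top = a \leq b$ by residuation applied to $\top \wedge a \leq b$ after rewriting $\top$ as $a \to b$. (I will need to be a little careful to phrase these uses of residuation correctly, but they are routine, and Lemmas~\ref{lem:leqgeq} and~\ref{lem:res3} give me exactly the deducible $\leq$-facts I need.)

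With that lemma, step (i) becomes: for each axiom \ref{it:H1}--\ref{it:H5} of the form $\alpha \to \beta$ (possibly nested), show $\Eq \vdash \alpha \leq \beta$; and \ref{it:H6} is $\top = \top$ by (\refl). Concretely: \ref{it:H3} and \ref{it:H4} are $a \wedge b \leq a$ and $a \wedge b \leq b$, standard semilattice facts from \ref{it:E1}--\ref{it:E3}. \ref{it:H5}, $a \to (b \to (a \wedge b))$, unfolds by the lemma (applied twice, peeling off both implications --- here I use that $\Eq \vdash (a \to (b \to c)) = \top$ iff $\Eq \vdash a \wedge b \leq c$, itself immediate from the lemma plus residuation) to $a \wedge b \leq a \wedge b$, which is (\refl) via Lemma~\ref{lem:leqgeq}. \ref{it:H1}, $a \to (b \to a)$, unfolds to $a \wedge b \leq a$, again \ref{it:H3}. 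The only genuinely fiddly one is \ref{it:H2}, $(a \to (b \to c)) \to ((a \to b) \to (a \to c))$: peeling off implications reduces it to showing $\Eq \vdash (a \to (b \to c)) \wedge (a \to b) \wedge a \leq c$; and indeed $(a \to b) \wedge a \leq b$ and $(a \to (b\to c)) \wedge a \leq b \to c$ by residuation, so $(a \to (b \to c)) \wedge (a \to b) \wedge a \leq (b \to c) \wedge b \leq c$, using Lemma~\ref{lem:res3}-style manipulations and transitivity of $\leq$. This computation --- tracking the meets and applying residuation in the right order --- is the main obstacle, though it is entirely mechanical once set up.

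Finally, step (ii), closure under (\mopo): suppose $\Eq \vdash g = \top$ and $\Eq \vdash (g \to f) = \top$. By the lemma, the latter gives $\Eq \vdash g \leq f$, i.e. $\Eq \vdash g \wedge f = g$. Substituting $g = \top$ (congruence and transitivity) yields $\Eq \vdash \top \wedge f = \top$, and since $\Eq \vdash \top \wedge f = f \wedge \top = f$ by \ref{it:E2} and \ref{it:E4}, transitivity gives $\Eq \vdash f = \top$, as required. This completes the induction and hence the proof.
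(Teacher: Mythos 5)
Your proposal is correct and follows essentially the same route as the paper: induction on the Hilbert derivation, reducing each axiom to a semilattice inequality via Lemma~\ref{lem:leqgeq} and residuation, and discharging (\mopo) by combining $g = \top$ with $g \leq f$. The only differences are cosmetic — you package the repeated residuation steps into a single characterisation ``$\Eq \vdash a \to b = \top$ iff $\Eq \vdash a \leq b$'' and treat \ref{it:H2} by chaining inequalities (glb reasoning plus transitivity of $\leq$) where the paper runs an explicit equational computation via Lemma~\ref{lem:res3}; both versions rest on the same derivable facts.
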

\begin{proof}
  We proceed by 
  induction on the derivation of $\ms{H} \vdash a$, and begin with
  the axioms.
          Suppose $f = a \to (b \to a)$ is an instance of \ref{it:H1}.
          As a consequence of Lemma \ref{lem:leqgeq}
          it suffices to show that $\ms{E} \vdash \top \leq a \to (b \to a)$.
          By residuation this follows from $\top \wedge a \leq b \to a$,
          which in turn follows from $(\top \wedge a) \wedge b \leq a$.
          The last item follows from observing that
          \begin{align*}
            (\top \wedge a) \wedge b
              &= (a \wedge \top) \wedge b
              = a \wedge b
              = b \wedge a
              = b \wedge (a \wedge a) \\
              &= (b \wedge a) \wedge a
              = (a \wedge b) \wedge a
              = ((a \wedge \top) \wedge b) \wedge a
              = ((\top \wedge a) \wedge b) \wedge a
          \end{align*}
          is a valid chain of equalities in $\Eq$.
          
          Now suppose that $f = (a \to (b \to c)) \to ((a \to b) \to
          (a \to c))$ is an instance of \ref{it:H2}. Again, we
          observe that 
          \begin{align*}
            (a \to (b \to c)) &\wedge (a \to b) \wedge a \\
              &= (a \wedge (a \to (b \to c))) \wedge (a \to b) \\
              &= (a \wedge (b \to c)) \wedge (a \to b)  \\
              &= (a \wedge (a \to b)) \wedge (b \to c) \\
              &= (a \wedge b) \wedge (b \to c) \\
              &= a \wedge (b \wedge (b \to c)) \\
              &= a \wedge (b \wedge c) \\
          \end{align*}
          is a valid chain of equalities in $\Eq$, where we have
          used associativity and commutativity, as well as Lemma
          \ref{lem:res3}.
          Similar to the previous case, it now suffices to prove
          $$
            \ms{E} \vdash \top \leq (a \to (b \to c)) \to ((a \to b)
            \to (a \to c)).
          $$
          To see this, note that $a \wedge (b \wedge c) \leq c$ is
          clearly derivable in $\Eq$.  By congruence, we obtain 
          $(a \to (b \to c)) \wedge (a \to b) \wedge a \leq c$ which
          yields
          $(a \to (b \to c)) \wedge (a \to b) \leq a \to c$ by
          residuation.  Applying residuation again gives
          $a \to (b \to c) \leq (a \to b) \to (a \to c)$. Combining
          congruence and \ref{it:E4} we obtain
          $\top \wedge (a \to (b \to c)) \leq (a \to b) \to (a \to c)$
          from which we conclude that 
          $\top \leq (a \to (b \to c)) \to ((a \to b) \to (a \to c))$
          using residuation one more time. This completes the case
          of \ref{it:H2}.
          
          Now suppose $f = (a \land b) \to a$, i.e. $\Hilb \entails
          f$ is established as a substitution instance of
          \ref{it:H3}.
          It suffices to show that $\top \leq (a \wedge b) \to a$,
          and in turn, this is established once we show that
          $\top \wedge (a \wedge b) \leq a$. The latter unfolds to 
          $(\top \wedge (a \wedge b)) \wedge a = \top \wedge (a \wedge b)$.
          This follows from commutativity, identity and symmetry.

          If $f = (a \land b) \to b$, i.e. $\Hilb \entails f$ is a
          substitution instance of \ref{it:H4}, the argument is
          analogous to the previous case.
          
          Now suppose that $\Hilb \entails f$ was obtained as $f$ is
          a substitution instance of \ref{it:H5}, that is, $f = a
          \to (b \to (a \land b))$. 
          It suffices to show that $\top \leq a \to (b \to (a \wedge b))$.
          By symmetry we have $a \wedge b = a \wedge b$
          and it follows from \ref{it:E4} and commutativity that
          $(\top \wedge a) \wedge b = a \wedge b$.
          This in turn implies $\top \wedge (a \wedge b) \leq a \wedge b$
          so that by residuation we get $\top \wedge a \leq b \to (a \wedge b)$.
          Applying residuation again gives
          $\top \leq a \to (b \to (a \wedge b))$ as required.
          
          The last case of substitution instances that we need to
          consider is the case $f = \top$, i.e. instances of
          \ref{it:H6}.
          We need to show that $\ms{E} \vdash \top = \top$, which is
          (an instance of) the equational axiom of reflexivity.

         The last missing item of the proof is to consider $\Hilb
         \entails f$ to be derived using modus ponens. So suppose we
         have $\Hilb \entails e \to f$ and $\Hilb \entails e$. By
         induction hypothesis, we have that
         $\Eq \entails e \to f = \top$ and $\Eq \entails e = \top$. 
         For the claim it suffices to show that $\Eq \entails \top
         \leq f$ by Lemma \ref{lem:leqgeq}.
         From the inductive hypothesis, we obtain $\top  = e \to f$
         by symmetry, and, using the second inductive hypothesis $e
         = \top$, also $\top = \top \to f$ by congruence.
         Lemma \ref{lem:leqgeq} then gives $\top \leq \top \to f$
         which gives $\top \land \top \leq f$ by residuation. Using
         \ref{it:E3} and congruence, this finally yields $\top \leq
         f$ as required.
\end{proof}

\end{document}